\tikzstyle{vertex}=[circle, draw, fill=black, inner sep=0pt, minimum size=6pt]
\tikzstyle{smlv}=[circle, draw, fill=black, inner sep=0pt, minimum size=2pt]
\tikzstyle{grvert}=[circle, draw, gray, fill=gray,  inner sep=0pt, minimum size=6pt]
\newcommand{\vertex}{\node[vertex]}
\newcommand{\smlv}{\node[smlv]}
\DeclarePairedDelimiter{\ceil}{\lceil}{\rceil}
\DeclarePairedDelimiter{\floor}{\lfloor}{\rfloor}
\newtheorem{theorem}{Theorem}
\newtheorem{proposition}[theorem]{Proposition}
\newtheorem{obs}[theorem]{Observation}
\newtheorem{conjecture}[theorem]{Conjecture}
\newtheorem{problem}[theorem]{Problem}
\theoremstyle{definition}
\newtheorem{definition}[theorem]{Definition}
\theoremstyle{remark}
\newtheorem{remark}[theorem]{Remark}
\newtheorem{example}[theorem]{Example}
\def\U{\mathscr{U}}
\def\M{\mathcal{M}}
\def\K{\mathcal{K}}
\def\V{\mathscr{V}}
\def\Z{\integers}
\numberwithin{theorem}{section}
\numberwithin{equation}{section}
\theoremstyle{plain}
\author{Anupam Mondal \and Sajal Mukherjee \and Kuldeep Saha}
\title[Matching complexes via discrete Morse theory]{Topology of matching complexes of complete graphs via discrete Morse theory}
\affiliation{Institute for Advancing Intelligence (IAI), TCG CREST, Kolkata, India\\
	Academy of Scientific \& Innovative Research (AcSIR), Ghaziabad, India}
\keywords{discrete Morse theory, complete graph, matching, abstract simplicial complex, gradient vector field, Morse homology}
\begin{document}
	\publicationdata{vol. 26:3}{2024}{13}{10.46298/dmtcs.12887}{2024-01-18; 2024-01-18; 2024-06-18; 2024-09-11}{2024-09-18}
	
\maketitle
	
	\begin{abstract}
	\hfill

		Bouc (1992) first studied the topological properties of $M_n$, the matching complex of the complete graph of order $n$, in connection with Brown complexes and Quillen complexes. Bj\"{o}rner et~al.\ (1994) showed that $M_n$ is homotopically $(\nu_n-1)$-connected, where $\nu_n=\lfloor{\frac{n+1}{3}}\rfloor-1$, and conjectured that this connectivity bound is sharp. Shareshian and Wachs (2007) settled the conjecture by inductively showing that the $\nu_n$-dimensional homology group of $M_n$ is nontrivial, with Bouc's calculation of $H_1(M_7)$ serving as the pivotal base step. In general, the topology of $M_n$ is not very well-understood, even for a small $n$. In the present article, we look into the topology of $M_n$, and $M_7$ in particular, in the light of discrete Morse theory as developed by Forman (1998). We first construct a gradient vector field on $M_n$ (for $n \ge 5$) that doesn't admit any critical simplices of dimension up to $\nu_n-1$, except one unavoidable $0$-simplex, which also leads to the aforementioned $(\nu_n-1)$-connectedness of $M_n$ in a purely combinatorial way. However, for an efficient homology computation by discrete Morse theoretic techniques, we are required to work with a gradient vector field that admits a low number of critical simplices, and also allows an efficient enumeration of gradient paths. An optimal gradient vector field is one with the least number of critical simplices, but the problem of finding an optimal gradient vector field, in general, is an \textsf{NP-hard} problem (even for $2$-dimensional complexes). We improve the gradient vector field constructed on $M_7$ in particular to a much more efficient (near-optimal) one, and then with the help of this improved gradient vector field, compute the homology groups of $M_7$ in an efficient and algorithmic manner. We also augment this near-optimal gradient vector field to one that we conjecture to be optimal.
	\end{abstract}
	
	{\let\thefootnote\relax\footnotetext{2020 \textit{Mathematics Subject Classification (MSC2020)}. 57Q70 (primary), 05C70, 05E45.}}

	\section{Introduction}
	The collection of all \emph{matchings} or independent edge sets (i.e., sets of edges without common endvertices) in a graph constitutes an abstract simplicial complex, called the \emph{matching complex} of the graph. The matching complex of a graph $G$ may also be realized as the \emph{independence complex} of the \emph{line graph} of $G$. In particular, we denote the matching complex of the complete graph of order $n$ (see \cite{blvz,bouc,shareshian,wachs}) by $M_n$. Topological properties of $M_n$ were first studied by \cite{bouc}, in connection with \emph{Brown complexes} (\cite{brown74,brown75}) and \emph{Quillen complexes} (\cite{quillen}). \cite{blvz} proved the following result regarding the homotopical connectivity of $M_n$.
	\begin{theorem}\label{homcon}
		For all $n$, the matching complex $M_n$ is homotopically $(\nu_n-1)$-connected, where $\nu_n=\floor{\frac{n+1}{3}}-1$.
	\end{theorem}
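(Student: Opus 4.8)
The plan is to exhibit, for every $n$ beyond a few small base cases, an explicit gradient vector field on $M_n$ (an acyclic matching on its face poset) that has exactly one critical simplex in dimensions $0,\dots,\nu_n-1$, namely a single $0$-simplex. By Forman's fundamental theorem of discrete Morse theory, $M_n$ is then homotopy equivalent to a CW complex with one $0$-cell and no cells in dimensions $1,\dots,\nu_n-1$; such a complex is $(\nu_n-1)$-connected, which is the assertion. This is the discrete-Morse counterpart of the classical route, which instead covers $M_n$ by the contractible stars $\operatorname{st}_{M_n}(e)$ of its vertices $e\in E(K_n)$ and feeds the connectivities of the multifold intersections of the cover into a generalized nerve lemma; I take the discrete-Morse route because it is the one that organizes the rest of the paper.

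The structural input, shared by both routes, is the local recursive shape of $M_n$: for an edge $e=\{a,b\}$ of $K_n$ viewed as a vertex of $M_n$ one has $\operatorname{lk}_{M_n}(e)\cong M_{n-2}$ (matchings avoiding $a$ and $b$) and $\operatorname{del}_{M_n}(e)=M(K_n-e)$, and deleting in turn all $n-1$ edges at a fixed vertex of $K_n$ leaves $M_{n-1}$. Guided by this, I would build the matching from a fixed ordering of the vertices, hence of the edges, of $K_n$ by a greedy rule: pair each matching $\sigma$ up or down along a canonically determined edge — say the least edge at the least vertex not yet pinned down by $\sigma$ — so that the unmatched simplices are precisely those saturated for the rule. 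A bare link--deletion recursion would, on its own, leak a critical simplex in dimension $\nu_n-1$ exactly when $n\equiv 2\pmod 3$ (there $\nu_{n-1}=\nu_n-1$); the purpose of the explicit global rule is that these would-be critical cells of one branch of the recursion cancel against cells of another, raising the least dimension of a surviving critical simplex to $\lfloor\tfrac{n+1}{3}\rfloor-1=\nu_n$. The numerical fact doing the work is the (tight) identity $\lfloor\tfrac{n+1}{3}\rfloor-\lfloor\tfrac{n-2}{3}\rfloor=1$; in the classical formulation this is just what makes the two-fold star intersection $M_{n-3}$ barely connected enough.

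The base cases are small: for $n\le 4$ one needs only that $M_n$ is nonempty ($M_1$ being trivially $(-2)$-connected), and for $n=5$ that $M_5$ — the Petersen graph — is connected, matching $\nu_5-1=0$. The main obstacle is the standard difficulty of discrete Morse constructions, in two parts: (i) proving the proposed matching is \emph{acyclic}, i.e.\ that the induced modification of the Hasse diagram has no directed cycle, which is the delicate point and forces careful use of the chosen linear order on $V(K_n)$; and (ii) the dimension count above, where the floor function and the three residue classes of $n$ modulo $3$ enter and where there is no numerical slack to exploit. With both in hand, the homotopy-type statement — and hence the claimed $(\nu_n-1)$-connectedness of $M_n$ — follows immediately.
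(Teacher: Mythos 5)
Your top-level strategy is exactly the paper's: build a gradient vector field on $M_n$ whose only critical simplex of dimension $\le \nu_n-1$ is a single $0$-simplex, then invoke Forman's theorem to conclude $M_n$ is $(\nu_n-1)$-connected. The reduction to that homotopy-equivalence statement is correct and is precisely how the paper derives Theorem~\ref{homcon} from Theorem~\ref{nolowcrit}. But what you have written is a plan, not a proof: you yourself flag that acyclicity and the dimension count are ``the main obstacle'' and leave them open, and those two items \emph{are} the entire mathematical content. In particular, there is nothing here that actually establishes a matching with the claimed property.

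More concretely, the greedy rule you sketch --- ``pair $\sigma$ up or down along the least edge at the least vertex not yet pinned down'' --- is both under-specified and, read in the natural way, wrong. A plain lexicographic/greedy pairing on $E(K_n)$ does not even define a well-formed involutive matching on the Hasse diagram (consider $\{17,26,35\}$ vs.\ $\{26,35\}$ vs.\ $\{14,26,35\}$ in $M_7$: the down-pair of the first and the up-pair of the second collide), let alone guarantee the critical cells all live above dimension $\nu_n-1$. The paper's construction is structurally different in an essential way: it fixes a partition $V(K_n)=V_1\sqcup\cdots\sqcup V_{\lceil n/3\rceil}$ into (mostly) triples, and at level $i$ pairs a matching $\alpha$ that covers at most one vertex of $V_i$ with $\alpha\sqcup\{e\}$ where $e$ is the \emph{complementary} $i$-level edge inside $V_i$. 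The triple partition is not incidental: it is exactly what forces every unpaired matching to cover at least two vertices in each $V_i$, which is what yields $|\alpha|\ge\frac12(n-\lceil n/3\rceil)>\nu_n$ and hence the dimension bound; and the level ordering is what makes the acyclicity argument (tracking which level the ``dropped'' cross- or levelled-edge lives in, and showing that quantity strictly decreases) go through. Your observation that $\nu_{n-1}=\nu_n-1$ exactly when $n\equiv2\pmod3$ is correct and is indeed the arithmetic tension the construction must beat, but noting the tension is not resolving it. To turn this into a proof you would need to (i) give a precise pairing rule, (ii) verify it is a genuine discrete vector field (each face in at most one pair), (iii) prove there are no nontrivial closed $\V$-paths, and (iv) prove the unpaired simplices all have dimension $>\nu_n-1$ except for one $0$-simplex. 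Steps (ii)--(iv) are all missing, and the rule in (i) as stated does not work.
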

	They also conjectured that the connectivity bounds of Theorem~\ref{homcon} are sharp. \cite{shareshian} settled the conjecture by inductively showing that the $\nu_n$-dimensional homology group of $M_n$ is nontrivial (see also \cite{wachs}). The crucial base step was Bouc's ``hand" calculation of the first homology group of $M_7$. Here we remark that the topology of $M_n$ is not very well-understood even for a small $n$. For example, the first nontrivial reduced homology group of $M_{14}$ is not known, but \cite{jonsson} showed that, rather surprisingly, $M_{14}$ is the only matching complex of a complete graph, for which the $\nu_n$-dimensional homology group has torsion other than $3$-torsion (see also \cite{shareshian}). Also, very little is known about the higher dimensional homology groups of $M_n$ in general.
	
	\cite{forman} developed discrete Morse theory as a combinatorial analogue of (smooth) Morse theory (see also \cite{chari,forman02,knudson,kozlov,scoville}). Over the years, the theory has turned out to be immensely useful in diverse fields of theoretical and applied mathematics, and also in computer science. The central notion of this theory is that of a \emph{discrete Morse function} defined on a finite (abstract) simplicial complex (or a (regular) CW complex). It helps us understand the topology of the complex through an efficient cell decomposition (i.e., one with fewer cells than in the original decomposition) of the complex. In practice, however, instead of such functions, we usually consider an equivalent and more useful notion of (discrete) \emph{gradient vector fields} on the complex. The homotopy type of the complex is determined by only the simplices (or cells) that are \emph{critical} with respect to an assigned gradient vector field. Discrete Morse theoretic techniques also help us compute the homology groups, Betti numbers, etc.\ of a complex in a computationally efficient way, provided one manages to construct a sufficiently ``good" gradient vector field on it, in the first place.
	
	In Section~\ref{sec-cons} of this article, we first prove the following by explicitly constructing a gradient vector field on $M_n$.
	\begin{theorem}\label{nolowcrit}
		There is a gradient vector field on $M_n$ (for $n\ge 5$) with respect to which there are no critical simplices of dimension up to $\nu_n -1$, except one $0$-simplex.
	\end{theorem}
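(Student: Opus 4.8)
The plan is to recast the statement combinatorially and then build the gradient vector field by hand. By Forman's correspondence between discrete gradient vector fields and acyclic partial matchings on the Hasse diagram of the face poset, it is enough to exhibit an acyclic matching $\mathcal{A}$ on the faces of $M_n$ in which the only face of dimension $\le \nu_n-1$ left unmatched is one prescribed edge of $K_n$. Recall that a $k$-face of $M_n$ is a matching of $K_n$ with exactly $k+1$ edges and that the cover relations are ``delete an edge'' / ``add an edge''; so the goal is to pair up \emph{all} matchings of size $2,3,\dots,\nu_n$, pair up all but one of the matchings of size $1$, and do this with no closed gradient path.

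I would construct $\mathcal{A}$ recursively in $n$, organised around a pivot, and I would use the Patchwork theorem (Cluster Lemma) for acyclic matchings to keep the acyclicity bookkeeping under control. Since $\nu_n=\nu_{n-3}+1$ (and $\nu_{n-1}\ge \nu_n-1$, $\nu_{n-2}\ge\nu_n-1$), the natural pivot is the triple $T=\{n-2,n-1,n\}$: one processes the three top vertices one at a time in the style of a matching tree, so that each node is described by a partial matching $P$ using vertices of $T$ (possibly with partners in $[n-3]$) together with a set $W\subseteq T$ of vertices declared uncovered. The faces of $M_n$ lying below such a node are exactly $\{P\cup\tau : \tau \text{ a matching on some } U\subseteq[n-3]\}$, i.e.\ a copy of $M_{|U|}$ with all dimensions shifted up by $|P|$; and the assignment ``face $\mapsto$ which node it lies under'' is order-preserving, so by the Patchwork theorem it suffices to put an acyclic matching on each node's block, and then $\mathcal{A}$ is acyclic globally. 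On nodes with $|U|$ large this is the inductive matching on $M_{|U|}$, lifted.

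The subtle point, and the place where the real work lives, is the control of low-dimensional critical faces: the inductive matching on $M_{|U|}$ contributes a critical edge, which lifts to a critical face of dimension $|P|\ge 1$, and such faces must be forbidden in dimensions $1,\dots,\nu_n-1$; worse, the small blocks with $|U|\in\{3,4\}$ carry several forced critical $0$-cells because $M_3,M_4$ are disconnected. I would choose the within-node pivot rule (e.g.\ always match on the least available vertex) and the branching order on $T$ so that every such low-dimensional contribution occurs with $|P|\ge\nu_n$ — one checks this using $\nu_n=\floor{\frac{n+1}{3}}-1$ together with how many vertices of $T$ can be matched into $[n-3]$ — except for one block that reduces (after a collapse) to the single critical edge; any residual low faces are cancelled by an explicit local rule inside a single block. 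The base cases are $n=5,6,7$ (the three residues mod $3$), where $\nu_n\le 1$ and the statement is just connectedness of $M_n$, which is elementary.

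The main obstacle is exactly this: designing the pivot and the intra-block cancellations so that low-dimensional critical faces never accumulate through the recursion, while the overall matching stays acyclic; the Patchwork theorem removes the need to re-verify acyclicity of the assembled object, but each local pairing still needs a ``reciprocity'' check, and the degenerate small sub-complexes $M_3,M_4$ have to be absorbed without leaking critical $0$-cells into low positive dimensions. Once $\mathcal{A}$ is built, Theorem~\ref{homcon} follows purely combinatorially: a CW complex with a discrete Morse function whose only critical cell of dimension $\le\nu_n-1$ is a single $0$-cell is homotopy equivalent to a CW complex with one vertex and all other cells in dimension $\ge\nu_n$, hence is $(\nu_n-1)$-connected.
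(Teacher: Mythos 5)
Your approach — a recursive matching tree on the top triple $T=\{n-2,n-1,n\}$ combined with the Patchwork theorem — is a genuinely different route from the paper, and is closer in spirit to the vertex-decomposability proof of Athanasiadis / Shareshian–Wachs that the paper cites as prior work. The paper instead fixes a partition of $V(K_n)$ into triples $V_1,\dots,V_m$ once and for all, and then runs a single global pairing rule: at stage $j$ it pairs every so-far-unpaired matching that covers $0$ or $1$ vertex of $V_j$ with the matching obtained by adding the unique or canonical $V_j$-internal edge. This is not a Patchwork/cluster decomposition, and crucially it produces exactly one critical $0$-simplex (the image of $\emptyset$) globally, rather than one per block. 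The lower bound on critical dimensions then comes from a single counting observation: any unpaired simplex covers $\ge 2$ vertices of every triple, hence has $>\nu_n$ edges.

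There is a genuine gap in your inductive accounting. When you apply the Patchwork theorem, each block is a shifted copy of some $M_{|U|}$, and the critical $0$-cell of the inductive matching on $M_{|U|}$ lifts to a critical $|P|$-cell of $M_n$, one per block. Your proposed fix is to arrange that ``every such low-dimensional contribution occurs with $|P|\ge\nu_n$,'' but $P$ is a partial matching covering $T$, so $|P|\le 3$ at any one level of the recursion, while $\nu_n$ grows without bound; for example at $n=11$ one has $\nu_n=3$ and blocks with $|P|=2$ leave critical $2$-cells, which the theorem forbids. Cancelling these ``residual low faces'' by ``an explicit local rule inside a single block'' cannot work either: there are many such blocks, and since the Patchwork theorem only certifies acyclicity of matchings respecting the fiber decomposition, any cross-block cancellation forfeits that guarantee and reopens the acyclicity question. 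To make a matching-tree recursion go through one would need the inductive hypothesis to assert that the $M_{|U|}$-matching is a relative matching (with no critical cells at all) in all but one designated block — a stronger statement that would need its own argument. The paper's direct construction sidesteps all of this.
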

	Theorem~\ref{homcon} follows as a natural consequence of the above. Existence of a gradient vector field with the same property was previously shown by \cite{shareshian} (Section~9), with the help of the fact that the $\nu_n$-skeleton of $M_n$ is \emph{vertex decomposable}, as established by \cite{athan}. However, the task of homology computation via discrete Morse theory relies on an efficient enumeration of all critical simplices and all possible \emph{gradient paths} of a specific type. Thus, in order to compute the homology groups efficiently, we are required to work with a gradient vector field that admits a low number of critical simplices of each dimension to begin with. An \emph{optimal gradient vector field} is one with the least number of critical simplices, but the problem of finding an optimal gradient vector field, in general, is an \textsf{NP-hard} problem (even for $2$-dimensional complexes) (\cite{egecioglu,joswig,lewiner,lewiner2}). In this context, we mention that \cite{adi}, \cite{benedetti2}, \cite{benedetti1} described a scheme for searching optimal discrete gradient vector fields with a random heuristic, which turned out to be successful, even in some cases with a large input size.
	
	An efficient gradient vector field reduces the task of computing homology to the computation of (discrete) \emph{Morse homology} groups, which are homology groups of a relatively simpler chain complex. Bouc's computation of the first homology group of $M_7$, which, as mentioned before, is the pivotal base step to determine the connectivity of $M_n$ in general, relies on some ingenious, but somewhat ad hoc tricks. In Section~\ref{hom-m7}, we present an efficient and algorithmic computation of the homology groups of $M_7$. We extend our previously constructed gradient vector field on $M_7$ to a \emph{near-optimal} one, and use it to compute the Morse homology groups of $M_7$ in Subsection~\ref{morse-hom}, and obtain the following.
	\begin{theorem}\label{m7-hom}	
		The nontrivial discrete Morse homology groups of $M_7$ are the following:
		\[H_0(M_7) = \Z, H_1(M_7) = \Z_3, \text{ and } H_2(M_7) = \Z^{20}.\]		
	\end{theorem}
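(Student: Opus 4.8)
\noindent\emph{Proof proposal.} The plan is to compute the three homology groups as the homology of the Morse chain complex $(C_*^{V},\partial_*^{V})$ determined by the near‑optimal gradient vector field $V$ on $M_7$, where $C_k^{V}$ is free abelian on the critical $k$‑simplices of $V$. Since every matching of $K_7$ has at most three edges, $M_7$ is $2$‑dimensional, so the whole computation comes down to the two Morse differentials
\[
  C_2^{V}\ \xrightarrow{\ \partial_2^{V}\ }\ C_1^{V}\ \xrightarrow{\ \partial_1^{V}\ }\ C_0^{V}.
\]
First I would list the critical simplices of $V$ explicitly: by Theorem~\ref{nolowcrit} (here $\nu_7=\lfloor 8/3\rfloor-1=1$) there is a unique critical $0$‑simplex $v_0$, so $C_0^{V}=\Z$; I would then read off the numbers $c_1=\operatorname{rank}C_1^{V}$ and $c_2=\operatorname{rank}C_2^{V}$ of critical edges and triangles from the explicit description of $V$ in Subsection~\ref{morse-hom}. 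Connectivity of $M_7$ (equivalently, the cancellation of the two oppositely oriented $V$‑paths from the endpoints of a critical edge to $v_0$) forces $\partial_1^{V}=0$, so already $H_0(M_7)=\Z$, while $H_1(M_7)=\operatorname{coker}\partial_2^{V}$ and $H_2(M_7)=\ker\partial_2^{V}$.

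For bookkeeping and as a consistency check I would compute the Euler characteristic in two ways: combinatorially, $M_7$ has $\binom{7}{2}=21$ vertices, $\binom{7}{4}\cdot 3=105$ edges and $\binom{7}{6}\cdot 15=105$ triangles, so $\chi(M_7)=21-105+105=21$; Morse‑theoretically, $\chi(M_7)=1-c_1+c_2$, whence $c_2-c_1=20$. Since $\ker\partial_2^{V}$ is a submodule of $\Z^{c_2}$ it is free of rank $c_2-\operatorname{rank}\partial_2^{V}$, so it suffices to show that the integer matrix of $\partial_2^{V}$ has rank exactly $c_1$ (forcing $H_2(M_7)=\Z^{c_2-c_1}=\Z^{20}$) and that its only non‑unit invariant factor is $3$ (forcing $H_1(M_7)=\Z_3$).

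The core of the argument, and the main obstacle, is the explicit determination of the matrix of $\partial_2^{V}$. Its entry indexed by a critical triangle $\sigma$ and a critical edge $\tau$ is the signed count of gradient paths (alternating $V$‑paths) running from a codimension‑one face of $\sigma$ down to $\tau$, each weighted by $\pm1$ according to the induced orientations along the path. I would therefore (i) enumerate, for each critical triangle, all such gradient paths — this is precisely where the near‑optimality of $V$ (few critical cells, and $V$‑paths that are short and well structured) is needed to keep the list finite and manageable, and where I would exploit the $S_7$‑action on $M_7$ and the layered way in which $V$ is built to cut the casework down to a few orbit representatives; (ii) assemble the resulting $c_1\times c_2$ matrix over $\Z$, tracking signs carefully; and (iii) put it into Smith normal form, confirming that the rank equals $c_1$ and that the single non‑unit elementary divisor is $3$. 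The delicate points are the orientation/sign computation along each gradient path and the cancellation or reinforcement among several paths; the independently known facts that $H_1(M_7)=\Z_3$ (Bouc) and $\chi(M_7)=21$ act as safeguards against arithmetic slips, and the resulting Smith normal form then yields $H_0(M_7)=\Z$, $H_1(M_7)=\Z_3$ and $H_2(M_7)=\Z^{20}$ as claimed.
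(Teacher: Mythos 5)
Your plan is essentially the approach the paper takes: work with the Morse chain complex of the near-optimal gradient vector field $\M^*$, note that $\tilde{\partial}_1=0$ (by the pairwise cancellation of the two $\M^*$-paths from the faces of each critical edge down to the unique critical vertex $\xi$), and then determine $H_1$ and $H_2$ from the integer matrix of $\tilde{\partial}_2$ by explicit enumeration of $\M^*$-paths from the faces of the $24$ critical $2$-simplices to the four critical $1$-simplices (the paper tabulates these in Table~\ref{boundary-table} and Appendix~\ref{append1}, then reads off $\operatorname{im}\tilde\partial_2$ and $\operatorname{rank}\ker\tilde\partial_2=20$). One caveat about your proposed shortcut: the $S_7$-action on $M_7$ does \emph{not} preserve $\M^*$, since $\M^*$ is built from a fixed partition $V_1\sqcup V_2\sqcup V_3$ of the vertex set together with a distinguished vertex in each block, so arbitrary $S_7$-orbit representatives cannot be used to transport $\M^*$-path computations; only the small subgroup of $S_7$ respecting that layered structure (roughly, the transpositions $v_2^{(i)}\leftrightarrow v_3^{(i)}$) is available for reducing casework, and the paper in effect exploits exactly that limited symmetry by grouping the path enumeration according to the critical $1$-simplices $\sigma_1,\dots,\sigma_4$.
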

	In Subsection~\ref{opt-cons}, we augment the near-optimal gradient vector field on $M_7$ even further and get the following.
	\begin{theorem} \label{augmnt}
		There is a gradient vector field on $M_7$ with respect to which there are 22 critical $2$-simplices, two critical $1$-simplices, and one critical $0$-simplex.
	\end{theorem}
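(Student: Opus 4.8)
The plan is to obtain the claimed gradient vector field from the near-optimal one $W_0$ constructed in Subsection~\ref{morse-hom} by a finite sequence of Forman cancellations. Recall Forman's cancellation lemma: if $W$ is a gradient vector field, $\sigma$ a critical $(p+1)$-simplex and $\tau$ a critical $p$-simplex, and there is \emph{exactly one} gradient $W$-path from a facet of $\sigma$ down to $\tau$, then reversing all arrows along that path yields a gradient vector field $W'$ whose critical simplices are those of $W$ with $\sigma$ and $\tau$ deleted. Since $M_7$ is $2$-dimensional, every cancellation we can perform pairs a critical $2$-simplex with a critical $1$-simplex, so each one lowers $c_2$ and $c_1$ by one while leaving $c_0=1$ fixed. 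The field $W_0$ has $c_0=1$, and Theorem~\ref{m7-hom}, together with $c_0=1$ (which kills the Morse differential $\partial_1$) and the $2$-dimensionality of $M_7$, pins down $c_2-c_1=20$; so the task is to carry out enough valid cancellations to bring $c_1$ down to $2$, which then brings $c_2$ down to $22$. (The torsion-refined strong Morse inequalities give $c_1\ge 1$ and $c_2\ge 21$, so $(1,2,22)$ is within a single cancellation of the Morse-theoretic lower bound.)

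Concretely, I would first record the critical simplices of $W_0$ and, for each critical $1$-simplex $\tau$ lying in the boundary of a critical $2$-simplex $\sigma$, enumerate the gradient paths from $\sigma$ to $\tau$. In a $2$-dimensional matching complex such a path is an alternating chain
\[
\sigma=\sigma_0\succ e_0\prec\sigma_1\succ e_1\prec\cdots\prec\sigma_k\succ e_k=\tau,\qquad W(e_i)=\sigma_{i+1},
\]
in which consecutive edges $e_i,e_{i+1}$ are $2$-edge matchings of $K_7$ that share a single edge and lie in a common $3$-edge matching; the recursive and largely symmetric description of $W_0$ should keep this enumeration short. I would then select a pair $(\sigma,\tau)$ joined by a unique gradient path, cancel it, and iterate, re-examining the gradient paths after each step since reversing a path alters $W$ and hence which paths are admissible. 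The cleanest way to present this is an explicit ordered list of the cancelling pairs together with a verification, at each stage, that the relevant path is the only one; once such a list ending at $(c_0,c_1,c_2)=(1,2,22)$ is in hand, acyclicity and the critical-cell count of the resulting field follow at once by iterating Forman's lemma, and the identity $1-2+22=21=\chi(M_7)$ gives a consistency check against Theorem~\ref{m7-hom}.

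The main obstacle is the uniqueness bookkeeping. Forman's lemma fails when two or more gradient paths join the chosen pair, and since the cancellations are performed in sequence, the paths available at the $i$-th step depend on all the earlier reversals, so the steps cannot be checked independently. I would address this either structurally --- at each stage choosing a critical $2$-simplex $\sigma$ whose two non-critical boundary edges are matched upward in a way that ``traps'' the descending path, making uniqueness manifest --- or, where no uniform such choice is available, by a direct (computer-assisted) enumeration over the finitely many, and in practice very few, relevant paths. As a fallback I would keep in reserve the option of bypassing $W_0$ altogether and exhibiting the final matching on the $231$ cells of $M_7$ directly, checking that it is acyclic with the stated critical simplices; this trades the sequential path-tracking for one large acyclicity verification, which is why the cancellation route seems preferable.
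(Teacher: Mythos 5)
Your overall strategy matches the paper's: start from the near-optimal gradient vector field on $M_7$ and cancel two pairs consisting of a critical $2$-simplex and a critical $1$-simplex each, landing at $(c_0,c_1,c_2)=(1,2,22)$. The Euler-characteristic and Morse-inequality bookkeeping you give is also the same as in the paper and in its Conclusion section.

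There are two points worth flagging, one methodological and one a genuine gap. Methodologically, you propose performing the two cancellations \emph{sequentially}, re-examining all gradient paths after the first reversal; you correctly identify that this is delicate, since reversing a path introduces new gradient paths through the reversed arc. The paper sidesteps this entirely by invoking Hersh's simultaneous-cancellation criterion (Theorem~\ref{multi-cancel}): it exhibits unique $\M^*$-paths from a face of $\eta_1$ to $\sigma_4$ and from a face of $\eta_2$ to $\sigma_3$, and checks that \emph{no} $\M^*$-path runs from a face of $\eta_1$ to $\sigma_3$ nor from a face of $\eta_2$ to $\sigma_4$, so no permuted routing exists and both reversals can be carried out at once. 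That theorem is exactly the tool designed to avoid the ``paths available at step $i$ depend on all earlier reversals'' problem you anticipate, so you should use it rather than sequential cancellation.

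The genuine gap is that your argument is a plan rather than a proof: you never name which two critical pairs to cancel, nor verify the uniqueness (or, for the simultaneous version, the no-crossing) conditions. This is not a formality. The paper's Conclusion shows that the four critical $1$-simplices of $\M^*$ split into coupled pairs $\{\sigma_1,\sigma_4\}$ and $\{\sigma_2,\sigma_3\}$, with $\M^*$-paths to $\sigma_1$ existing exactly when $\M^*$-paths to $\sigma_4$ exist (and similarly for $\sigma_2,\sigma_3$); as a consequence, at most one member of each coupled pair can be cancelled, and a careless first choice can destroy the uniqueness needed for a second cancellation. So ``select a pair joined by a unique gradient path, cancel it, and iterate'' is not guaranteed to terminate at $(1,2,22)$ unless you in fact identify the right pairs --- in the paper these are $(\eta_1,\sigma_4)$ and $(\eta_2,\sigma_3)$ --- and check, by explicit enumeration of the gradient paths into each $\tau_{ij}$, that the hypotheses of the (multi-)cancellation theorem hold. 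That enumeration is the content of the paper's Appendix and is what your proposal leaves as an unexecuted ``to do.''
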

	It follows from Theorem~\ref{m7-hom} that with respect to any gradient vector field on $M_7$, there is at least one critical $0$-simplex, and there is at least one critical $1$-simplex. Also, there are at least 21 critical $2$-simplices (as $H_1(M_7)$ has a torsion). This naturally raises the question whether these lower bounds are sharp. However, we believe these bounds are not attainable, and thus a gradient vector field on $M_7$ that satisfies the requirements of Theorem~\ref{augmnt} is indeed an optimal one. We pose this as a conjecture (Conjecture~\ref{conj}) in the Conclusion section.
	
	\section{Preliminaries}
	\subsection{Basics of combinatorics and graph theory}\label{cgt}
	An \emph{abstract simplicial complex} (or simply, a complex) is a (finite, nonempty) collection, say $\K$, of finite sets with the property that if $\sigma \in \K$ and $\tau \subseteq \sigma$, then $\tau \in \K$. We note that the empty set is always in $\K$. If $\sigma \in \K$, then $\sigma$ is called a \emph{simplex} or a \emph{face} of $\K$. If the simplex $\sigma$ is a set of cardinality $d+1$, then the dimension of $\sigma$ is $d$, and we call $\sigma$ a $d$-dimensional simplex (or simply, a $d$-simplex). We denote a $d$-simplex $\sigma$ by $\sigma^{(d)}$ whenever necessary. The dimension of a complex $\K$, denoted by $\dim(\K)$, is the largest dimension of its faces. The \emph{vertex set} of a complex $\K$ is defined as $V(\K)=\cup_{\sigma \in \K} \sigma$ (i.e., the collection of all elements in all the faces of $\K$). The elements of $V(\K)$ are called the vertices of the complex $\K$.
	
	Any complex has a unique (up to a homeomorphism) \emph{geometric realization}. However, we don't differentiate between a complex and its geometric realization while discussing its topological properties; it should be understood from the context.
	
	The \emph{$f$-vector} of a complex $\K$ is the integer sequence $(f_0,\ldots,f_{\dim(\K)})$, where $f_i$ is the number of $i$-dimensional faces of $\K$. The \emph{Euler characteristic} of $\K$, denoted by $\chi(\K)$, is given by 
	\[\chi(\K) = \sum_{i=0}^{\dim(\K)} (-1)^i f_i.\]
	
	A (simple, finite, undirected) \emph{graph} $G$ is an ordered pair of (disjoint) finite sets $(V(G), E(G))$, where $E(G) \subseteq \{e \subseteq V(G) : |e|=2\}$. The sets $V(G)$ and $E(G)$ are called the \emph{vertex set} and the \emph{edge set} of the graph $G$, respectively. We call an element of $V(G)$ a \emph{vertex} of $G$ and an element of $E(G)$ an \emph{edge} in $G$. If $e = \{x,y\}$ is an edge, then $x$ and $y$ are \emph{endvertices} of the edge $e$. Also, we denote the edge $e = \{x,y\}$ by $xy$ (or, $yx$) for the sake of brevity. We note that a graph can be viewed as an abstract simplicial complex of dimension 1 or less (with the vertices and the edges being considered as $0$-simplices and $1$-simplices, respectively), and vice versa.
	
	A \emph{complete graph} with $n$ ($\ge 1$) vertices is a graph $G$ with $|V(G)|=n$ and $E(G) = \{e \subseteq V(G) : |e|=2\}$. This graph is unique (up to a graph isomorphism), and we denote it by $K_n$.
	
	A \emph{matching} $\alpha$ in a graph $G$ is a set of edges with the property that if the edges $e_1$ and $e_2$ are in $\alpha$, then $e_1 \cap e_2 = \emptyset$ (i.e., $e_1$ and $e_2$ don't share an endvertex). If the vertex $x$ is an endvertex of an edge in a matching $\alpha$, then we say $\alpha$ \emph{covers} or \emph{matches} $x$, and otherwise, $x$ is \emph{uncovered} or \emph{unmatched} by $\alpha$. Moreover, we say the matching $\alpha$ matches $x$ with another vertex $y$ if the edge $xy$ is in $\alpha$. A \emph{perfect matching} is one that covers all the vertices of the graph.
	
	We note that the collection of all matchings in a graph $G$ is an abstract simplicial complex, and we call it the \emph{matching complex} of $G$. In particular, for all $n$, we denote the matching complex of $K_n$ by $M_n$. We may verify that for all $n$, $\dim(M_n) = \floor{\frac{n}{2}} -1$, and if $(f_0,\ldots,f_{\floor{\frac{n}{2}}-1})$ is the $f$-vector of $M_n$, then for all $i \in \{0,\ldots,\floor{\frac{n}{2}}-1\}$,
	\begin{equation}\label{fv}
		f_i= \frac{(i+1)!}{2^{i+1}} {n\choose {2 (i+1)}}{{2(i+1)}\choose {i+1}}.
	\end{equation}
	
	\begin{example}
		Let $V(K_n)=\{1,\ldots,n\}$. We describe (the topology of) $M_n$, for all $n$ up to 8.
		\begin{enumerate}
			\item $M_1 = \{\emptyset\}$.
			\item $M_2 = \{\emptyset, \{12\}\}$, which is a single point.
			\item $M_3 = \{\emptyset, \{12\}, \{13\}, \{23\}\}$, which is the space with three distinct points.
			\item The maximal simplices of $M_4$ are $\{12,34\}$, $\{13,24\}$, and $\{14,23\}$. Thus, $M_4$ is a space with three mutually disjoint $1$-simplices, which is homotopy equivalent to the space with three distinct points.
			\item As $\dim(M_5)=1$, the complex $M_5$ can be viewed as a (connected) graph with the vertex set $\{ij: i,j\in [5], i<j\}$ and the edge set $\{\{ij,k\ell\} : i,j,k,\ell \in [5], i<j, k<\ell, \{i,j\} \cap \{k,\ell\} = \emptyset\}$. This is the well-known Petersen graph as shown in Fig.~\ref{petersen}.
		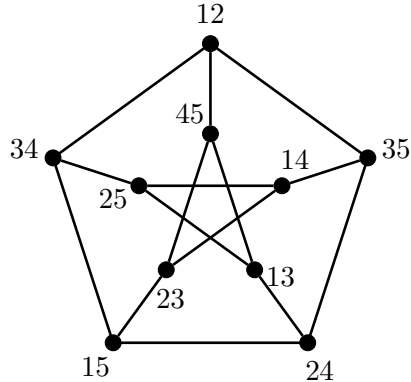
\begin{figure}[!ht]
			\centering
			\begin{tikzpicture}
				\foreach \a in {1,...,5}
				{\node[vertex] (u\a) at ({\a*72+18}:2.2){};}
				
				\node at (18:2.6) {35};
				\node at (90:2.6) {12};
				\node at (162:2.6) {34};
				\node at (234:2.6) {15};
				\node at (304:2.6) {24};
				
				\foreach \a in {1,...,5}
				{\node[vertex] (v\a) at ({\a*72+18}:1){};}
				
				\node at (30:1.3) {14};
				\node at (102:1.3) {45};
				\node at (174:1.3) {25};
				\node at (246:1.3) {23};
				\node at (316:1.3) {13};
				
				\foreach \a in {18, 90, ..., 306}
				{\draw[line width=1pt] (\a:2.2) -- (\a+72:2.2);}
				
				\path [line width=1pt]
				(v1) edge (v3)
				(v3) edge (v5)
				(v5) edge (v2)
				(v2) edge (v4)
				(v4) edge (v1)
				;
				
				\foreach \a in {1,...,5}
				{\draw[line width=1pt] (u\a) -- (v\a);}
			\end{tikzpicture}
			\caption{The matching complex $M_5$ is the Petersen graph.}\label{petersen}
		\end{figure}
	
		Any connected graph $G$ is homotopy equivalent to a wedge of $k$ circles, where $k$ is the cyclomatic number (also known as the circuit rank or cycle rank) of the graph, which is the number $|E(G)|-|V(G)|+1$. Thus, $M_5$ is homotopy equivalent to the wedge of $15-10+1=6$ circles.
		
		Here we also recall that the cyclomatic number of a connected graph is the same as the first Betti number of the graph when considered as a simplicial complex.
		\item Although $\dim(M_6)=2$, we observe that each $1$-simplex in $M_6$ is contained in exactly one $2$-simplex. By the notion of (elementary) \emph{collapses} in topology \cite[Chapter~1]{cohen}, the complex $M_6$ deformation-retracts to a complex of dimension 1. Thus, $M_6$ is also homotopy equivalent to a (connected) graph, and subsequently, to a wedge of $k$ circles. Here we determine $k$ from the Euler characteristic (which is a homotopy invariant) of $M_6$. If $(f_0,f_1,f_2)$ is the $f$-vector of $M_6$, then from Equation~(\ref{fv}),
		$f_0 =15$, $f_1 =45$, ${f_2= 15}$, and thus $1-k=\chi(M_6) = f_0-f_1+f_2 = -15$. Therefore, $M_6$ is homotopy equivalent to the wedge of 16 circles.
		
		\item $M_7$ is a $2$-dimensional complex, whose first homology group is not torsion-free (\cite{bouc}). The zeroth, first, and second homology groups of $M_7$ are $\mathbb{Z}$, $\mathbb{Z}_3$, and $\mathbb{Z}^{20}$, respectively (see Subsection~\ref{morse-hom}).
		
		\item $M_8$ is homotopy equivalent to the wedge of 132 spheres of dimension 2 (follows from \cite{blvz}; see Example~\ref{m8top} below for a discrete Morse theoretic approach).
		\end{enumerate}
	\end{example}

	\subsection{Simplicial homology}
	First, we need to introduce the notion of an orientation of a simplex. An orientation of a simplex is given by an ordering of its vertices, with two orderings defining the same orientation if and only if they differ by an even permutation. We denote an oriented $k$-simplex consisting of the vertices $x_0$, $x_1,\ldots,$ $x_k$, with the orientation given by the increasing ordering of the indices, by $[x_0$, $x_1,\ldots,$ $x_k]$. We usually choose and fix an ordering of the vertices of the complex to begin with, and assign each simplex the orientation corresponding to the induced ordering of its vertices. In other words, if $\sigma=\{x_0,x_1,\ldots,x_k\}$ is a $k$-simplex of a complex $\K$, and $x_0<x_1<\cdots<x_k$ with respect to the chosen order on $V(\K)$, then in order to avoid notational complicacy, we denote the oriented $k$-simplex $[x_0,x_1,\ldots,x_k]$ by $\sigma$ as well.
	
	A $k$-chain in a complex $\K$ is a \emph{finite} formal sum $\sum c_i\sigma_i$, where each $c_i \in \Z$ and each $\sigma_i$ is an oriented $k$-simplex, with the notion that an oriented simplex is equal to the negative of the simplex with the opposite orientation (e.g., $[x_0$, $x_1$, $x_2,\ldots,$ $x_k]=-[x_1$, $x_0$, $x_2,\ldots,$ $x_k]$).
	
	We denote the free abelian group generated by all $k$-simplices of a complex $\K$, i.e., the group of $k$-chains, by $C_k(\K)$. We now define a homomorphism $\partial_k: C_k(\K) \to C_{k-1}(\K)$ called the \emph{boundary operator}. If $\sigma = [x_0,x_1,\ldots,x_k]$, considered as a basis element of $C_k(\K)$, then  
	\[\partial_k(\sigma) \coloneqq \sum_{i=0}^k (-1)^i[x_0,\ldots,\widehat{x_i},\ldots,x_k],\]
	where $[x_0,\ldots,\widehat{x_i},\ldots,x_k]$ is the oriented $(k-1)$-simplex obtained from $\sigma$ after deleting $x_i$ (and with the induced orientation). We then extend $\partial_k$ linearly to all $k$-chains. We also define $\partial_0$ to be the zero map. In $C_k(\K$), the elements of the subgroup $\ker(\partial_k)$ are called \emph{cycles} (more specifically, $k$-cycles), and the elements of the subgroup $\operatorname{im}(\partial_{k+1})$ are called \emph{boundaries} (more specifically, $k$-boundaries).
	
	We may verify that for all $k \ge 1$, and for any $k$-chain $\tau$, we have  $\partial_{k-1}\circ\partial_{k}(\tau)=0$. In other words, $(C_*(\K), \partial_*)$ is a \emph{chain complex}. The $k$-th homology group of $\K$, denoted by $H_k(\K)$, is given by  
	\[H_k(\K) \coloneqq \sfrac{\ker(\partial_k)}{\operatorname{im}(\partial_{k+1})}.\]
	The $k$-th \emph{Betti number} of $\K$ is the free rank (i.e., rank of the torsion-free part) of $H_k(\K)$.
	
	\bigskip
	We refer to the book \emph{Elements of Algebraic Topology} by \cite{munkres} for background on algebraic and combinatorial topology, and the book \emph{Graph Theory} by \cite{diestel} for background on graph theory.

	\subsection{Basics of discrete Morse theory}\label{dmt}
	First, we introduce the notion of a \emph{discrete vector field} and a (discrete) \emph{gradient vector field} on an abstract simplicial complex following \cite{forman,forman02}. 
	\begin{definition}[Discrete vector field]
		A discrete vector field $\V$ on an abstract simplicial complex $\K$ is a collection of ordered pairs of simplices of the form $(\alpha,\beta)$ such that 
		\begin{enumerate}[(i)]
			\item $\alpha \subsetneq \beta$,
			\item dimension of the simplex $\beta$ is 1 more than that of $\alpha$,
			\item each face of $\K$ is in \emph{at most} one pair of $\V$.
		\end{enumerate}
	\end{definition}
	If the simplex $\alpha^{(p)}$ is paired off with the simplex $\beta^{(p+1)}$ in a discrete vector field (i.e., the pair of simplices $(\alpha^{(p)}, \beta^{(p+1)})$ is an element of the discrete vector field), then we denote it by $\alpha \rightarrowtail \beta$ (or $\beta \leftarrowtail \alpha$).

	Given a discrete vector field $\V$ on a simplicial complex $\K$, a $\V$-path is a sequence of simplices 
	\[\alpha_0^{(d)}, \beta_0^{(d+1)}, \alpha_1^{(d)}, \beta_1^{(d+1)}, \ldots, \alpha_k^{(d)}, \beta_k^{(d+1)}, \alpha_{k+1}^{(d)}\]
	such that for each $i \in \{0,\ldots,k\}, (\alpha_i,\beta_i) \in \V$ and $\beta_i \supsetneq \alpha_{i+1} \ne \alpha_i$. We represent such a path diagrammatically as below
	\begin{center}
		\begin{tikzcd}
			\alpha_0 \ar[r,tail] & \beta_0 \ar[r] & \alpha_1 \ar[r,tail] & \beta_1 \ar[r] & \cdots \ar[r] & \alpha_k \ar[r,tail] & \beta_k \ar[r] & \alpha_{k+1}
		\end{tikzcd}
	\end{center}
	(in the diagram above, $\beta^{(d+1)} \to \alpha^{(d)}$ implies $\beta \supsetneq \alpha$). We say such a path is a \emph{nontrivial closed path} if $k \ge 0$ and $\alpha_{k+1} = \alpha_0$.
	\begin{definition}[Gradient vector field]
		A gradient vector field on a simplicial complex $\K$ is a discrete vector field $\V$ on $\K$ which does not admit nontrivial closed $\V$-paths.
	\end{definition}
	For a gradient vector field $\V$, when it is clear from the context, we sometimes call a $\V$-path a \emph{gradient path}.

	Let $\V$ be a gradient vector field on a simplicial complex $\K$. We call a nonempty simplex $\alpha$ a \emph{critical simplex} (with respect to $\V$) if one of the following holds:
	\begin{enumerate}[(i)]
		\item $\alpha$ does not appear in any pair of $\V$, or
		\item $\alpha$ is a $0$-simplex and $(\emptyset,\alpha) \in \V$.
	\end{enumerate}
	We recall that a \emph{CW complex} is a topological space built recursively by gluing cells (which are homeomorphic copies of balls) of increasing dimension. The fundamental theorem of discrete Morse theory is as below.
	\begin{theorem}[{\cite{forman,forman02}}]\label{funda}
		If $\K$ is a simplicial complex and $\V$ is a gradient vector field on $\K$, then $\K$ is homotopy equivalent to a CW complex with exactly one cell of dimension $p$ for each critical simplex (with respect to $\V$) of dimension $p$.
	\end{theorem}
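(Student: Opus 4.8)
The plan is to reduce the theorem to a sequence of \emph{elementary collapses} and single cell attachments, sequenced with the help of the acyclicity hypothesis. The basic observation is that a matched pair $(\alpha^{(p)},\beta^{(p+1)})\in\V$ records an elementary collapse: whenever, at some stage of a peeling process, $\beta$ happens to be the only simplex properly containing $\alpha$, deleting both $\alpha$ and $\beta$ leaves a subcomplex onto which the ambient complex deformation retracts. A critical simplex, by contrast, is never deleted: it survives the peeling and reappears, in the reverse build-up, as a cell of its own dimension. So the whole argument comes down to (i) producing a valid order in which to perform the collapses, and then (ii) checking that what is left over---exactly the critical simplices---is assembled into a CW complex with one cell of dimension $p$ for each critical $p$-simplex.

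For step (i) I would follow Forman and pass through a compatible \emph{discrete Morse function} $f\colon\K\to\mathbb{R}$; it is precisely acyclicity of $\V$ that lets one assign real values to the simplices so that the pairing of $\V$ records exactly the ``descents'' of $f$ (each non-critical simplex is placed slightly above its matched coface, or slightly below its matched face) while distinct critical simplices get distinct values. Put $\K(c)=\bigcup_{\,\sigma:\,f(\sigma)\le c}\overline{\sigma}$, the union of the closed simplices with value at most $c$; this is an increasing filtration of $\K$ by subcomplexes with $\K(c)=\K$ for $c$ large. Two lemmas then do the work: (a) if the interval $(a,b]$ contains no value $f(\sigma)$ of a critical simplex $\sigma$, then $\K(b)$ deformation retracts onto $\K(a)$, because everything added in that range arrives in matched pairs that can be collapsed away; and (b) if $(a,b]$ contains the value of exactly one critical simplex $\sigma$, of dimension $p$, then $\K(b)$ is homotopy equivalent to $\K(a)$ with a single $p$-cell attached along a map of $\partial\sigma$ into $\K(a)$---collapsing the matched pairs of that range pushes $\partial\sigma$ down into $\K(a)$, after which $\sigma$ itself is the new $p$-cell. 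Inducting on the finitely many values of $f$, starting from the empty complex, builds $\K$ up to homotopy equivalence as a CW complex with exactly one $p$-cell per critical $p$-simplex, which is the claim.

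A more combinatorial route to step (i), avoiding $f$ altogether, is to form the \emph{modified Hasse diagram} $H_\V$ on the set of all simplices: put an arc $\beta\to\alpha$ whenever $\alpha\subsetneq\beta$ with $\dim\beta=\dim\alpha+1$, except that for a matched pair $(\alpha,\beta)\in\V$ this arc is reversed to $\alpha\to\beta$. One checks $H_\V$ is acyclic: in a hypothetical directed cycle, take $\tau$ of maximal dimension $k$; its two cycle-neighbours have dimension $k-1$, the one before it is its matched face (since it must be entered by an up-arc), and, because a matched face cannot simultaneously be a matched coface, tracing the cycle backwards from $\tau$ never leaves dimensions $k-1$ and $k$---so the cycle is itself a nontrivial closed $\V$-path, contradicting the definition of a gradient vector field. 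A finite acyclic digraph has a linear extension; processing the matched pairs in a compatible order, each $(\alpha,\beta)$ is a free pair at the moment it is reached, so its removal is an elementary collapse, and after all of them only the critical simplices remain, to be re-attached one at a time as cells.

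In either route the crux---and the only place the hypothesis ``no nontrivial closed $\V$-path'' is genuinely used---is the sequencing in step (i): distinct matched pairs may share a lower face or otherwise block one another, so not every order of collapses is legal, and acyclicity is exactly what guarantees a legal order exists. The remaining care is bookkeeping: one must check that the successive deformation retractions of lemma (a) and the attaching maps of lemma (b) are mutually compatible, so that they assemble into a single CW structure rather than a mere chain of homotopy equivalences. I will also note that running the same induction at the level of cellular chain complexes produces the \emph{Morse chain complex}, with one generator per critical simplex and computing $H_*(\K)$; this refinement is what makes the computations in Section~\ref{hom-m7} possible.
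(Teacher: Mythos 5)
The paper does not prove this result; Theorem~\ref{funda} is quoted directly from Forman's work (the fundamental theorem of discrete Morse theory), so there is no in-paper argument to compare your sketch against. What you have written is a reasonable outline of Forman's own proof (your Route~1, via a compatible discrete Morse function and the sublevel-set filtration) together with Chari's combinatorial reformulation via the modified Hasse diagram (your Route~2).

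The acyclicity argument in Route~2 is sound: from a maximal-dimensional simplex $\tau^{(k)}$ in a putative directed cycle, the observation that a simplex cannot simultaneously be the lower member of one $\V$-pair and the upper member of another forces the backward trace to alternate strictly between dimensions $k$ and $k-1$, and what results is verbatim a nontrivial closed $\V$-path, exactly as you say. One point to flag: the subsequent claim that ``after all of [the collapses] only the critical simplices remain, to be re-attached one at a time as cells'' overstates what the linear extension gives. If a critical simplex $\gamma$ is a proper coface of the lower member $\alpha$ of some pair $(\alpha,\beta)\in\V$ with $\gamma\neq\beta$, then $\gamma$ obstructs the collapse of $(\alpha,\beta)$ (since $\alpha$ is not a free face of $\beta$ while $\gamma$ is still present) and must be stripped away \emph{before} that collapse; so critical-cell removals have to be interleaved with the collapses in the linear-extension order, not deferred to the end. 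Your Route~1 handles this automatically, since the sublevel-set filtration already interleaves them. Even granting the interleaving, the assertion that each matched pair is a \emph{free} pair at the moment it is reached still requires an inductive verification along the linear extension (that every strict coface of $\alpha$ other than $\beta$ has already been removed), which you gesture at but do not carry out---you are right that this sequencing is the crux, and it is where Forman's lemma on intervals without critical values is genuinely doing work. The final bookkeeping you mention---that the chain of homotopy equivalences assembles into an actual CW structure---is the standard fact that composing a cell attachment with a homotopy equivalence of the base yields, up to homotopy, a cell attachment, which Forman records as a separate lemma.
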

	The following is an important corollary of Theorem~\ref{funda}.
	\begin{theorem}[{\cite{forman02}}]\label{wedge}
		If $\K$ is a simplicial complex and $\V$ is a gradient vector field on $\K$ such that the only critical simplices are one $0$-simplex and $k$ simplices of dimension $d$, then $\K$ is homotopy equivalent to the wedge of $k$ spheres of dimension $d$.
	\end{theorem}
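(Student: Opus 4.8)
The plan is to derive this as a direct corollary of Theorem~\ref{funda}. First I would apply Theorem~\ref{funda} to obtain a CW complex $X$ that is homotopy equivalent to $\K$ and has exactly one $0$-cell, say $x_0$, and exactly $k$ cells of dimension $d$, and no cells in any other dimension. The remaining task is purely to identify the homotopy type of such an $X$, so there is nothing left that depends on the particular complex $\K$ or the particular gradient vector field $\V$.

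Next I would split into cases according to $d$. The case $d=0$ is immediate: $X$ is then a set of $k+1$ isolated points, which is by definition the wedge of $k$ copies of $S^0$. For $d=1$, the complex $X$ is obtained by attaching $k$ edges to the single vertex $x_0$; since the $0$-skeleton is the single point $\{x_0\}$, each attaching map $S^0\to\{x_0\}$ is constant, so every $1$-cell is a loop at $x_0$ and $X$ is the wedge of $k$ circles. For $d\ge 2$, the $(d-1)$-skeleton of $X$ coincides with its $0$-skeleton, namely the single point $\{x_0\}$; hence each of the $k$ attaching maps $\varphi_i\colon S^{d-1}\to\{x_0\}$ is the constant map, and $X$ is precisely the quotient $(\{x_0\}\sqcup\bigsqcup_{i=1}^k D^d)/{\sim}$ obtained by collapsing each boundary sphere $\partial D^d$ to $x_0$, which is exactly $\bigvee_{i=1}^k (D^d/\partial D^d)=\bigvee_{i=1}^k S^d$.

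Putting the three cases together gives $X\simeq\bigvee_{i=1}^k S^d$, and composing with the homotopy equivalence $\K\simeq X$ supplied by Theorem~\ref{funda} finishes the proof. I do not anticipate a genuine obstacle here: once Theorem~\ref{funda} is in hand, the argument is entirely a matter of recognizing the CW structure, and the only mild care needed is in the low-dimensional cases $d=0,1$, since for $d\ge 2$ the nullity of the attaching maps is forced simply by the absence of cells in dimensions $1,\dots,d-1$.
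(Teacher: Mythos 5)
Your argument is correct and is precisely the route the paper itself indicates: it presents this as ``an important corollary of Theorem~\ref{funda}'' and cites Forman for the details, and your proof simply carries out that derivation, recognizing the resulting CW complex (one $0$-cell, $k$ $d$-cells, nothing in between) as a wedge of $k$ spheres by observing that all attaching maps are forced to be constant. The case split on $d$ is a reasonable bit of care, though the $d=1$ and $d\ge 2$ cases are handled by the identical observation and could be merged.
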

	
	Theorem~\ref{funda} implies that the topological information pertaining to $\K$ would be concise and easier to compute if the number of critical simplices of each dimension, with respect to $\V$, is as low as possible. If $m_i$ is the number of $i$-dimensional critical simplices of $\K$, and $b_i$ is the $i$-th Betti number of $\K$, then we have the following inequalities.
	\begin{theorem}[Morse inequalities, {\cite{forman,forman02}}]\label{morse-ineq}
		\hfill
		\begin{description}
			\item[The weak Morse inequalities:] If $d$ is the dimension of $\K$, then
			\begin{enumerate}[(i)]
				\item for each $i \in \{0,1,\ldots,d\}$, $m_i \ge b_i$,
				\item $m_0-m_1+\cdots+(-1)^d m_d = b_0-b_1+\cdots+(-1)^d b_d$.
			\end{enumerate}
			\item[The strong Morse inequalities:]
			For each $i \ge 0$,
			\[m_i-m_{i-1}+\cdots+(-1)^im_0 \ge b_i-b_{i-1}+\cdots+(-1)^ib_0.\]		
		\end{description}
	\end{theorem}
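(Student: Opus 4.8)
The plan is to reduce both families of inequalities to elementary rank bookkeeping on a single chain complex whose $p$-th term has rank exactly $m_p$; discrete Morse theory enters only to produce such a complex. By Theorem~\ref{funda}, $\K$ is homotopy equivalent to a CW complex $X$ with precisely $m_p$ cells in each dimension $p$. Since cellular, singular, and (for $\K$) simplicial homology all agree and singular homology is a homotopy invariant, the Betti numbers $b_p$ are computed from the cellular chain complex $(\mathcal{C}_*,\partial_*)$ of $X$, where $\mathcal{C}_p$ is free abelian of rank $m_p$ and $\mathcal{C}_p=0$ unless $0\le p\le d$. (Alternatively, one may run the same argument directly on Forman's Morse chain complex associated with the given gradient vector field, which is a $\mathbb{Z}$-chain complex with the same ranks and the same homology.) Throughout, write $z_p\coloneqq\operatorname{rank}\ker\partial_p$ and $r_p\coloneqq\operatorname{rank}\operatorname{im}\partial_{p+1}$; note $r_{-1}=r_d=0$ and $\partial_0=0$.

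First I would record two rank identities. The rank-nullity theorem applied to $\partial_p\colon\mathcal{C}_p\to\mathcal{C}_{p-1}$ gives $m_p=z_p+\operatorname{rank}\operatorname{im}\partial_p=z_p+r_{p-1}$. Next, since $\operatorname{im}\partial_{p+1}\subseteq\ker\partial_p$ with quotient $H_p(X)$, the short exact sequence $0\to\operatorname{im}\partial_{p+1}\to\ker\partial_p\to H_p(X)\to 0$ of finitely generated abelian groups, together with additivity of rank, yields $z_p=r_p+b_p$. Combining these,
\[m_p=b_p+r_p+r_{p-1},\qquad\text{with } r_p\ge 0 .\]
Part~(i) of the weak inequalities is then immediate. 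For part~(ii), I would sum the displayed identity with signs over $p$; reindexing shows $\sum_{p}(-1)^p r_{p-1}=-\sum_{p}(-1)^p r_p$ (using $r_{-1}=r_d=0$), so the two boundary-rank sums cancel and $\sum_p(-1)^p m_p=\sum_p(-1)^p b_p$.

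For the strong inequalities I would fix $i\ge 0$, set $S_i\coloneqq\sum_{j=0}^{i}(-1)^{i-j}m_j$, substitute $m_j=b_j+r_j+r_{j-1}$, and observe that the boundary-rank contributions telescope: using $r_{-1}=0$, one finds $\sum_{j=0}^{i}(-1)^{i-j}r_j+\sum_{j=0}^{i}(-1)^{i-j}r_{j-1}=r_i$. Hence $S_i=\sum_{j=0}^{i}(-1)^{i-j}b_j+r_i\ge\sum_{j=0}^{i}(-1)^{i-j}b_j$, which is exactly the asserted strong Morse inequality.

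I expect no deep obstacle: once Theorem~\ref{funda} supplies a chain complex of the correct dimensions, the entire argument is linear algebra over $\mathbb{Z}$. The only points that need a little care are handling the endpoints $p=-1$ and $p=d$ correctly when reindexing the alternating sums of the $r_p$, and invoking additivity of rank along short exact sequences of finitely generated abelian groups, which is precisely what guarantees that torsion in the groups $H_p(X)$ does not affect the identity $m_p=b_p+r_p+r_{p-1}$.
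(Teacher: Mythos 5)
Your proof is correct: the identity $m_p=b_p+r_p+r_{p-1}$ obtained from rank--nullity and the short exact sequence $0\to\operatorname{im}\partial_{p+1}\to\ker\partial_p\to H_p\to 0$, together with the telescoping of the alternating sums (with the endpoint conventions $r_{-1}=r_d=0$ handled as you state), yields both the weak and strong inequalities. The paper does not prove Theorem~\ref{morse-ineq} itself but cites it, and your route --- invoking Theorem~\ref{funda} to obtain a chain complex of free abelian groups of ranks $m_p$ computing $H_*(\K)$ and then doing the standard integral rank bookkeeping --- is essentially the classical argument of the cited sources, so there is nothing to add.
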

	We call a gradient vector field a \emph{perfect gradient vector field} if  $m_i = b_i$ for all $i$. Since $b_i$ is the free rank of the $i$-th homology group of $\K$, it follows that no perfect gradient vector field exists on $\K$ if a homology group of $\K$ has torsion. Moreover, a perfect gradient vector field on a complex may not exist even when all the homology groups are torsion-free, e.g., the dunce hat (\cite{ayala,whitehead,zeeman}). This motivates us to call a gradient vector field an \emph{optimal gradient vector field} if the number of critical simplices is the least possible (in comparison with all other gradient vector fields on the same complex). Here we note that the problem of finding an optimal gradient vector field (equivalently, finding a gradient vector field of the highest cardinality) on a given complex is not a computationally easy problem in general; in fact it was shown to be an \textsf{NP-hard} problem (\cite{joswig,lewiner}), even for $2$-dimensional complexes (\cite{egecioglu,lewiner2}).  A linear algorithm to find optimal gradient vector fields on (discrete) $2$-manifolds was provided by \cite{lewiner2}. Although this algorithm can be extended to CW complexes of dimension (up to) two without the manifold property (an example of such a complex, in relation to this article, is the matching complex $M_7$), the resulting gradient vector field may be arbitrarily far from the optimum.
	
	 The following is a useful result to augment a given gradient vector field on a complex (i.e., to reduce the number of critical simplices) by ``cancelling" a pair of critical simplices.
	\begin{theorem}[Cancellation of a critical pair, {\cite{forman,forman02}}]\label{cancel}
		Suppose $\V$ is a gradient vector field on a complex $\K$ such that $\alpha^{(d)}$ and $\beta^{(d+1)}$ are critical. If there is a unique $\V$-path from a $d$-simplex contained in $\beta$, say $\alpha^{(d)}_0$, to $\alpha$, then there is a gradient vector field $\V'$ on $\K$ such that the  critical simplices with respect to $\V'$ remain the same, except that $\alpha$ and $\beta$ are no longer critical. Moreover, $\V'$ is same as $\V$ except along the unique $\V$-path from $\alpha_0$ to $\alpha$.
	\end{theorem}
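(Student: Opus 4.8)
The plan is to produce $\V'$ explicitly, by ``reversing'' the given gradient path, in the spirit of Forman's original argument. Write the unique $\V$-path from $\alpha_0$ to $\alpha$ as
\[
\alpha_0 = \alpha_0^{(d)},\ \beta_0^{(d+1)},\ \alpha_1^{(d)},\ \beta_1^{(d+1)},\ \ldots,\ \alpha_k^{(d)},\ \beta_k^{(d+1)},\ \alpha_{k+1}^{(d)} = \alpha ,
\]
so that $(\alpha_i,\beta_i)\in\V$ and $\beta_i\supsetneq\alpha_{i+1}\ne\alpha_i$ for $0\le i\le k$. First I would record a small fact used throughout: the $\alpha_i$ are pairwise distinct (hence so are the $\beta_i$), since a coincidence $\alpha_i=\alpha_j$ with $i<j$ would force $\beta_i=\beta_j$, as every simplex lies in at most one pair of $\V$, and then the segment from index $i$ to index $j$ would be a nontrivial closed $\V$-path, contradicting that $\V$ is a gradient vector field. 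Then set
\[
\V'\ \coloneqq\ \bigl(\V\setminus\{(\alpha_i,\beta_i):0\le i\le k\}\bigr)\ \cup\ \{(\alpha_{i+1},\beta_i):0\le i\le k\}\ \cup\ \{(\alpha_0,\beta)\};
\]
this makes sense since $\alpha_0$ is a $d$-face of the $(d+1)$-simplex $\beta$ and each $\alpha_{i+1}$ is a $d$-face of $\beta_i$.

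Next comes the routine verification that $\V'$ is a discrete vector field with the advertised critical simplices. Conditions (i) and (ii) in the definition of a discrete vector field hold for each new pair by construction, and for condition (iii) one tracks the finitely many affected simplices: each $\beta_i$ moves from $(\alpha_i,\beta_i)$ to $(\alpha_{i+1},\beta_i)$; each $\alpha_i$ with $1\le i\le k$ moves from $(\alpha_i,\beta_i)$ to $(\alpha_i,\beta_{i-1})$; $\alpha_0$ moves from $(\alpha_0,\beta_0)$ to $(\alpha_0,\beta)$; $\alpha=\alpha_{k+1}$, previously in no pair, enters $(\alpha_{k+1},\beta_k)$; $\beta$, previously in no pair, enters $(\alpha_0,\beta)$; and by the distinctness observation no simplex ends up in two pairs. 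Reading off paired versus unpaired status then shows that the critical simplices of $\V'$ are exactly those of $\V$ with $\alpha$ and $\beta$ removed, and the ``moreover'' clause is immediate, as $\V$ and $\V'$ agree on every pair disjoint from the path. (In the degenerate case $d=0$ with $\alpha$ critical because $(\emptyset,\alpha)\in\V$, one additionally deletes that pair; this changes no critical simplex, since $\emptyset$ is never critical, and cannot create a closed path in dimensions $(-1,0)$.)

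The heart of the argument, and the only place the \emph{uniqueness} hypothesis is used, is to show that $\V'$ has no nontrivial closed $\V'$-path. Since $\V$ and $\V'$ differ only among pairs of dimensions $(d,d+1)$, any nontrivial closed $\V'$-path lies in those dimensions, and — not being a $\V$-path — must use at least one of the newly introduced pairs; every such pair involves only simplices lying on the reversed path
\[
\rho\ :\quad \alpha,\ \beta_k,\ \alpha_k,\ \ldots,\ \beta_0,\ \alpha_0 ,
\]
which is itself a $\V'$-path from $\alpha$ to $\alpha_0$ (and $\alpha_0\rightarrowtail\beta$ in $\V'$). Tracing a hypothetical nontrivial closed $\V'$-path $\tilde\gamma$, one sees that each time it uses a pair outside $\V$ it is running backwards along $\rho$; pursuing this, $\tilde\gamma$ decomposes into arcs that traverse $\rho$ in reverse and arcs built entirely from pairs of $\V$. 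Gluing such a $\V$-arc to the appropriate sub-path of $\rho$ then yields either a nontrivial closed $\V$-path (impossible, since $\V$ is a gradient vector field) or a $\V$-path from a $d$-face of $\beta$ to $\alpha$ distinct from the given one (impossible, by the uniqueness hypothesis). This forces $\tilde\gamma$ not to exist, so $\V'$ is a gradient vector field with the stated critical simplices. The careful case analysis in this last step — determining precisely where $\tilde\gamma$ may enter and leave $\rho$ — is the main technical obstacle; everything else is direct bookkeeping.
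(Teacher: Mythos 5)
Your construction of $\V'$ is exactly the one the paper uses (reverse the unique gradient path and add the pair $(\alpha_0,\beta)$), and your acyclicity argument — that a hypothetical closed $\V'$-path would splice with $\rho$ to produce either a nontrivial closed $\V$-path or a second $\V$-path from a $d$-face of $\beta$ to $\alpha$ — is the same idea the paper compresses into the single sentence "The uniqueness of the $\V$-path from $\alpha_0$ to $\alpha$ guarantees that $\V'$ is also a gradient vector field on $\K$." You in fact supply more of the bookkeeping than the paper does, including the distinctness of the $\alpha_i$, the check that no simplex lands in two pairs, and the edge case $d=0$ with $\alpha$ critical via $(\emptyset,\alpha)\in\V$ (which the paper silently omits); so this is the same proof with the gaps partially filled.
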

	A sketch of a proof is as follows. Let the unique $\V$-path from $\alpha_0$ to $\alpha$ be
	\[\alpha_0^{(d)}, \beta_0^{(d+1)}, \alpha_1^{(d)}, \beta_1^{(d+1)}, \ldots, \alpha_k^{(d)}, \beta_k^{(d+1)}, \alpha_{k+1}^{(d)}=\alpha.\]
	Thus, we have the following diagram.
	\begin{center}
		\begin{tikzcd}
			\beta \ar[r] & \alpha_0 \ar[r,tail] & \beta_0 \ar[r] & \alpha_1 \ar[r,tail] & \beta_1 \ar[r] & \cdots \ar[r] & \alpha_k \ar[r,tail] & \beta_k \ar[r] & \alpha_{k+1}=\alpha
		\end{tikzcd}
	\end{center}
	We get $\V'$ from $\V$ by reversing the arrows (with $\to$ becoming $\leftarrowtail$ and $\rightarrowtail$ becoming $\leftarrow$) in the diagram above.
	\begin{center}
		\begin{tikzcd}
			\beta & \alpha_0 \ar[l,tail] & \beta_0 \ar[l] & \alpha_1 \ar[l,tail] & \beta_1 \ar[l] & \cdots \ar[l,tail] & \alpha_k \ar[l,tail] & \beta_k \ar[l] & \alpha_{k+1}=\alpha \ar[l,tail]
		\end{tikzcd}
	\end{center}
	In other words, 
	\[\V'=\left(\V \setminus \left\{(\alpha_i,\beta_i): i \in \{0,1,\ldots,k\}\right\}\right) \sqcup \left\{(\alpha_{i+1},\beta_i): i \in \{0,1,\ldots,k\}\right\} \sqcup \{(\alpha_0,\beta)\}\]
	($\sqcup$ denotes the union of disjoint sets). The uniqueness of the $\V$-path from $\alpha_0$ to $\alpha$ guarantees that $\V'$ is also a gradient vector field on $\K$. Moreover, it implies that $\alpha$ and $\beta$ are not critical with respect to $\V'$, while the criticality of all other simplices remains unchanged.
	
	The following allows us to apply the technique above to cancel several pairs of critical simplices simultaneously.
	\begin{theorem}[{\cite{hersh}}]\label{multi-cancel}
		Let $\V$ be a gradient vector field on a complex $\K$ such that for $i \in \{1,\ldots,r\}$ there is a unique $\V$-path $\gamma_i$ from a $(d_i-1)$-simplex contained in the critical $d_i$-simplex $\beta_i$ to the critical $(d_i-1)$-simplex $\alpha_i$. If there is no non-identity permutation $\pi$ of $r$ elements such that there is a $\V$-path from a $(d_i-1)$-simplex contained in $\beta_i$ to $\alpha_{\pi(i)}$ for all $i \in \{1,\ldots,r\}$, then reversing all the $\V$-paths $\gamma_i$ (to cancel the critical pair $\alpha_i$ and $\beta_i$) would still produce a gradient vector field on $\K$.
	\end{theorem}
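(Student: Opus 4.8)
The plan is to cancel all $r$ critical pairs at once: define $\V'$ by reversing every $\gamma_i$ simultaneously (applying the construction from the sketch following Theorem~\ref{cancel} to each $i$ in parallel), and then check that $\V'$ is again a gradient vector field, using the permutation hypothesis precisely where it is needed. Writing $\gamma_i$ as $\alpha^{(i)}_0\rightarrowtail\beta^{(i)}_0,\ \alpha^{(i)}_1\rightarrowtail\beta^{(i)}_1,\ \ldots,\ \alpha^{(i)}_{k_i}\rightarrowtail\beta^{(i)}_{k_i},\ \alpha^{(i)}_{k_i+1}=\alpha_i$ with $\alpha^{(i)}_0\subsetneq\beta_i$, the simultaneous reversal is
\[
\V'=\Bigl(\V\setminus\bigcup_{i=1}^r\bigl\{(\alpha^{(i)}_s,\beta^{(i)}_s):0\le s\le k_i\bigr\}\Bigr)\ \sqcup\ \bigcup_{i=1}^r\Bigl(\bigl\{(\alpha^{(i)}_{s+1},\beta^{(i)}_s):0\le s\le k_i\bigr\}\sqcup\bigl\{(\alpha^{(i)}_0,\beta_i)\bigr\}\Bigr).
\]
I will take for granted (it is harmless and plainly intended) that $\alpha_1,\dots,\alpha_r,\beta_1,\dots,\beta_r$ are $2r$ distinct simplices.

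First I would verify that $\V'$ is a discrete vector field, that is, that no simplex lies in two of its pairs; the crux is that the hypothesis forces the paths $\gamma_1,\dots,\gamma_r$ to be pairwise disjoint. Two facts about $\V$ are used: since $\V$ is a matching, no simplex is at once the bottom of one $\V$-pair and the top of another; and the only critical simplices attached to $\gamma_i$ are its terminus $\alpha_i$ and the simplex $\beta_i\supsetneq\alpha^{(i)}_0$. From these, when $d_i\ne d_j$ the simplices and new pairs contributed by $\gamma_i$ and $\gamma_j$ cannot collide: the dimension ranges $\{d_i-1,d_i\}$ and $\{d_j-1,d_j\}$ meet in at most one dimension, in which one path supplies only tops of pairs and the other only bottoms, while distinctness disposes of the case $\beta_i=\alpha_j$. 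When $d_i=d_j$, suppose $\gamma_i$ and $\gamma_j$ shared a simplex; since the upward step along a $\V$-path is forced ($\beta^{(i)}_s$ is determined by $\alpha^{(i)}_s$ because $\V$ is a matching), they would then share a $(d_i-1)$-simplex $\alpha^\ast$, and concatenating the initial segment of $\gamma_i$ up to $\alpha^\ast$ with the final segment of $\gamma_j$ from $\alpha^\ast$ on yields a $\V$-path from a $(d_i-1)$-face of $\beta_i$ to $\alpha_j$, and symmetrically one from a face of $\beta_j$ to $\alpha_i$; together with $\gamma_k$ for $k\ne i,j$, the transposition $(i\,j)$ would then be a non-identity permutation of the forbidden type --- a contradiction. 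Hence the $\gamma_i$ are pairwise disjoint, so $\V'$ is a discrete vector field; moreover, straight from the definition $\alpha_i$ has become the bottom of $(\alpha_i,\beta^{(i)}_{k_i})$ and $\beta_i$ the top of $(\alpha^{(i)}_0,\beta_i)$, so neither is critical for $\V'$ while every other simplex keeps its status.

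Next I would show that $\V'$ has no nontrivial closed path. Such a path $\delta$ would lie in a single pair of dimensions $(e,e+1)$, and the only $\V'$-arrows there that differ from $\V$ are the reversed arrows of the (now disjoint) paths $\gamma_i$ with $d_i=e+1$; if $\delta$ used none of them it would be a closed $\V$-path, which is impossible, so it uses at least one. The idea is to generalize Forman's argument that reversing a single unique gradient path preserves acyclicity (the sketch after Theorem~\ref{cancel}): cut $\delta$ along its excursions through the reversed $\gamma_i$ and splice each excursion with the complementary initial and final segments of that $\gamma_i$. This turns $\delta$ into a finite family of genuine $\V$-paths that, between them, run from a $(d_i-1)$-face of $\beta_i$ to $\alpha_{\pi(i)}$ for every $i$, where $\pi$ is read off from the cyclic way the excursions are chained --- non-identity on the indices actually met by $\delta$ and identity (witnessed by $\gamma_k$) on the rest. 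This contradicts the hypothesis, so no such $\delta$ exists: $\V'$ is a gradient vector field, and by the previous paragraph its critical simplices are exactly those of $\V$ with $\alpha_1,\dots,\alpha_r,\beta_1,\dots,\beta_r$ deleted, which proves the theorem.

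The main obstacle is this last step --- precisely, the bookkeeping in the splicing. An excursion of $\delta$ may enter and leave a single reversed $\gamma_i$ at intermediate simplices rather than only at $\alpha_i$ and $\alpha^{(i)}_0$, and may run through the ``capping'' pairs $(\alpha^{(i)}_0,\beta_i)$, so the naive picture of $\delta$ as a cycle built from whole reversed paths is not quite right, and one needs a careful case analysis to confirm that the chaining really does define a permutation $\pi$ with the forbidden property. The disjointness argument of the preceding step is the same rerouting idea in easier circumstances, and the concluding count of critical simplices is immediate.
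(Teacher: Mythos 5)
The paper cites this theorem to Hersh and supplies no proof of its own, so there is no in-paper argument to compare against; I assess your proposal on its own. Your overall scheme --- reverse all $\gamma_i$ simultaneously to form $\V'$, prove the $\gamma_i$ pairwise disjoint so that $\V'$ is a well-defined discrete vector field, then prove $\V'$ has no nontrivial closed path --- is the right one, and your disjointness step is complete and correct: a shared simplex of $\gamma_i$ and $\gamma_j$ splices into a $\V$-path from a face of $\beta_i$ to $\alpha_j$ and one from a face of $\beta_j$ to $\alpha_i$, which together with the remaining $\gamma_k$'s realizes the transposition $(i\,j)$ and contradicts the hypothesis.

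The acyclicity step, however, has a genuine gap that you label ``bookkeeping'' but that actually turns on an idea you never invoke, namely the \emph{uniqueness} of each $\gamma_i$. Cutting a closed $\V'$-path $\delta$ at its excursions through the reversed paths yields a cyclic list of indices $j_1\to j_2\to\cdots\to j_m\to j_1$ together with spliced $\V$-paths $\tilde\gamma_t$ from a face of $\beta_{j_t}$ to $\alpha_{j_{t+1}}$, but the $j_t$ may repeat, so no permutation is simply ``read off from the cyclic way the excursions are chained.'' Two missing steps close the gap. (i) Rule out $j_t=j_{t+1}$: in that case $\tilde\gamma_t$ would be a $\V$-path from a face of $\beta_{j_t}$ to $\alpha_{j_t}$ that differs from $\gamma_{j_t}$ (it takes a different drop at the exit $\beta$-simplex, or starts at a different face of $\beta_{j_t}$), contradicting uniqueness. (ii) With consecutive indices forced to differ, extract from the closed walk $j_1,\dots,j_m$ a genuine directed cycle $c_1\to\cdots\to c_\ell\to c_1$ on \emph{distinct} indices with $\ell\ge2$ (take the segment between the first repeated index), and only then define $\pi$ as the $\ell$-cycle on $\{c_1,\dots,c_\ell\}$ and the identity, witnessed by $\gamma_k$, elsewhere --- including on indices visited by $\delta$ but dropped in the extraction, so your phrase ``non-identity on the indices actually met by $\delta$'' overclaims. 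Without both (i) and (ii), and in particular without the uniqueness hypothesis, the forbidden permutation does not materialize, and your proposal does not say where uniqueness enters.
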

	
	\subsection{Discrete Morse homology}
	Let $\K$ be an abstract simplicial complex and $\V$ be a given gradient vector field on $\K$. Let us fix an ordering on $V(\K)$, which induces an orientation on the simplices.
	
	First, we need to introduce the notion of the \emph{incidence number} between two oriented simplices of consecutive dimensions. Let $\beta=[x_0,x_1,\ldots,x_k]$ be a $k$-simplex. If $\alpha=[x_0,\ldots,\widehat{x_i},\ldots,x_k]$ is a $(k-1)$-simplex (contained in $\beta$), then the incidence number between $\beta$ and $\alpha$ is $(-1)^i$, and we denote it by $\langle\beta,\alpha\rangle$. Otherwise, if $\alpha$ is a $(k-1)$-simplex such that $\alpha \nsubseteq \beta$, then we define $\langle\beta,\alpha\rangle$ to be $0$.
	
	Now, let 
	\[\gamma :  \alpha_0^{(d)}, \beta_0^{(d+1)}, \alpha_1^{(d)}, \beta_1^{(d+1)}, \ldots, \alpha_{k-1}^{(d)}, \beta_{k-1}^{(d+1)}, \alpha_{k}^{(d)}\]
	be a $\V$-path. The \emph{multiplicity} of $\gamma$ (\cite{forman,gallais}), denoted by $m(\gamma)$, is given by
	\begin{equation}\label{mult}
		m(\gamma) \coloneqq 
		\prod_{i=0}^{k-1} (-1)\langle\beta_i,\alpha_i\rangle\langle\beta_i,\alpha_{i+1}\rangle =
		(-1)^{k}\prod_{i=0}^{k-1} \langle\beta_i,\alpha_i\rangle\langle\beta_i,\alpha_{i+1}\rangle.
	\end{equation}
	We observe that, for all $i$, both $\langle\beta_i,\alpha_i\rangle$, $\langle\beta_i,\alpha_{i+1}\rangle$ $\in \{-1,+1\}$, and thus $m(\gamma) \in \{-1,+1\}$. 
	
	Let $\Gamma(\sigma', \sigma)$ be the set of $\V$-paths starting at $\sigma'$ and ending at $\sigma$. Let us denote the free abelian group generated by all critical (with respect to $\V$) $k$-simplices of $\K$ by $\tilde{C}_k(\K)$. For a (oriented) critical $k$-simplex $\tau$, first we define a boundary operator $\tilde{\partial}_k$ on $\tau$ as below, and then extend it linearly to $\tilde{\partial}_k : \tilde{C}_k(\K) \to \tilde{C}_{k-1}(\K)$.
	\begin{equation}\label{boundary}
		\tilde{\partial}_k(\tau)  \coloneqq \sum_{\sigma : \text{ critical}} n(\tau^{(k)}, \sigma^{(k-1)})\cdot \sigma^{(k-1)},
	\end{equation}
	where
	\begin{equation}\label{bound-coeff}
		n(\tau, \sigma) \coloneqq 
		\sum_{\sigma' \subsetneq \tau} \langle\tau^{(k)},\sigma'^{(k-1)}\rangle \sum_{\gamma \in \Gamma(\sigma', \sigma)}m(\gamma).
	\end{equation}
	It follows that, for all $k \ge 0$, we have $\tilde{\partial}_k \circ \tilde{\partial}_{k+1}=0$ (\cite{forman,gallais}). Thus, $(\tilde{C}_*(\K), \tilde{\partial}_*)$ is a chain complex, and we call the homology of this chain complex the (discrete) \emph{Morse homology} of $\K$ (with respect to the chosen gradient vector field $\V$).
	\begin{theorem}[{\cite{forman}}]\label{funda-hom}
		With respect to any given gradient vector field $\V$ on $\K$, the chain complex $(\tilde{C}_*(\K), \tilde{\partial}_*)$ is homotopy equivalent to the simplicial chain complex $(C_*(\K), \partial_*)$. Consequently, the discrete Morse homology, which is independent of the chosen gradient vector field, is isomorphic to the simplicial homology, i.e., 
		\[H_k(\tilde{C}_*(\K), \tilde{\partial}_*)=H_k(\K), \text{ for all }k \ge 0.\]
	\end{theorem}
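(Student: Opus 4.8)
The plan is to construct, for an arbitrary gradient vector field $\V$, an explicit chain homotopy equivalence between the simplicial chain complex $(C_*(\K),\partial_*)$ and the Morse chain complex $(\tilde{C}_*(\K),\tilde{\partial}_*)$; the asserted isomorphism of homology groups then follows immediately, and its independence of $\V$ is automatic since $(C_*(\K),\partial_*)$ involves no choice of gradient vector field. The main tool is the \emph{discrete flow} associated with $\V$. First I would encode $\V$ as a degree-$(+1)$ homomorphism $V\colon C_*(\K)\to C_{*+1}(\K)$, defined on basis elements by $V(\alpha)\coloneqq-\langle\beta,\alpha\rangle\,\beta$ whenever $\alpha\rightarrowtail\beta$ in $\V$ and $V(\sigma)\coloneqq0$ if $\sigma$ is not the lower element of any pair; this is well defined because each simplex lies in at most one pair. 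Setting $\Phi\coloneqq\operatorname{id}+\partial V+V\partial\colon C_*(\K)\to C_*(\K)$, one checks, using only $\partial\circ\partial=0$, that $\Phi$ is a chain map, i.e.\ $\partial\Phi=\Phi\partial$.

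The crucial step is that the defining property of a gradient vector field---the absence of nontrivial closed $\V$-paths---forces $\Phi$ to \emph{stabilize}, meaning $\Phi^{N}=\Phi^{N_0}$ for all $N\ge N_0$ and some $N_0$. I would prove this by expanding $\Phi(\sigma)$ in the simplex basis and observing that one application of $\Phi$ trades a simplex for a combination of simplices reached from it along $\V$-paths, with the ``initial'' simplex itself cancelling whenever it is the lower or upper element of a pair; since $\K$ is finite and admits no closed $\V$-paths, iterating terminates, and a clean way to formalise this is to fix a well-founded order on simplices along which $\V$-paths strictly decrease. Writing $\Phi^\infty\coloneqq\Phi^{N_0}$ and $A\coloneqq\sum_{i=0}^{N_0-1}\Phi^{i}$, a telescoping computation (using that $\Phi$ commutes with $\partial$) gives $\Phi^\infty=\operatorname{id}+\partial(AV)+(AV)\partial$, so $\Phi^\infty$ is chain homotopic to $\operatorname{id}_{C_*(\K)}$. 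Moreover $\Phi^\infty$ is an idempotent chain map, so $C_*^\infty\coloneqq\operatorname{im}\Phi^\infty$ is a subcomplex of $(C_*(\K),\partial_*)$, and the inclusion $C_*^\infty\hookrightarrow C_*(\K)$ and the projection $\Phi^\infty\colon C_*(\K)\to C_*^\infty$ are mutually inverse chain homotopy equivalences.

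It then remains to identify $(C_*^\infty,\partial)$ with $(\tilde{C}_*(\K),\tilde{\partial}_*)$. I would first show that $\sigma\mapsto\Phi^\infty(\sigma)$ restricts to an isomorphism $\tilde{C}_k(\K)\cong C_k^\infty$: for a critical $k$-simplex $\sigma$ one has $\Phi^\infty(\sigma)=\sigma+(\text{a chain supported on non-critical }k\text{-simplices})$, so projection onto the critical simplices is a left inverse, and a trace computation (valid since $\Phi-\Phi^\infty$ is nilpotent) shows that $\operatorname{rank}C_k^\infty$ equals the number of critical $k$-simplices, which together with the left inverse forces the map to be an isomorphism. Under this identification the induced differential on $C_k^\infty$ is $\tau\mapsto\Phi^\infty(\partial\,\Phi^\infty(\tau))$; expanding $\partial\Phi^\infty$ and collecting, for each critical $(k-1)$-simplex $\sigma$, the contributions indexed by the $(k-1)$-faces $\sigma'$ of $\tau$, one rewrites the resulting coefficient as $\langle\tau,\sigma'\rangle$ times a sum over all $\V$-paths $\gamma$ from $\sigma'$ to $\sigma$ of an alternating product of the incidence numbers $\langle\beta_i,\alpha_i\rangle$ and $\langle\beta_i,\alpha_{i+1}\rangle$ encountered along $\gamma$; a sign count identifies that product with the multiplicity $m(\gamma)$ of Equation~(\ref{mult}), so the induced differential is exactly $\tilde{\partial}_k$ as in Equations~(\ref{boundary})--(\ref{bound-coeff}). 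Composing the chain homotopy equivalences yields $H_k(\tilde{C}_*(\K),\tilde{\partial}_*)\cong H_k(\K)$ for all $k\ge0$.

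The part I expect to be the main obstacle is this last identification: keeping the orientations and incidence signs under control while resumming $\partial\Phi^\infty$ as a weighted enumeration of $\V$-paths, and confirming that all the signs conspire to produce exactly $m(\gamma)$ and not some unintended sign. The stabilization step is the conceptual heart of the argument, but it is technically routine once the correct well-founded order is in place. As an alternative one could instead invoke Theorem~\ref{funda} to replace $\K$ by the Morse CW complex with one $p$-cell per critical $p$-simplex and compute its cellular chain complex, but identifying the cellular attaching degrees with the coefficients $n(\tau,\sigma)$ of Equation~(\ref{bound-coeff}) requires essentially the same sign analysis, so that route saves little.
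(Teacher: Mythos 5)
The paper does not prove this statement; it is stated as a citation to Forman's foundational work, so there is no ``paper proof'' to compare against directly. Your proposal is a faithful reconstruction of Forman's own argument via the discrete gradient flow, and as far as I can tell the outline is correct: the degree-$(+1)$ operator $V$, the flow $\Phi = \operatorname{id} + \partial V + V\partial$, its chain-map property from $\partial^2 = 0$, the stabilization $\Phi^\infty = \Phi^{N_0}$ forced by the absence of closed $\V$-paths, the telescoping homotopy $\Phi^\infty - \operatorname{id} = \partial(AV) + (AV)\partial$ with $A = \sum_{i<N_0}\Phi^i$, the idempotence of $\Phi^\infty$ and the subcomplex $C^\infty_*=\operatorname{im}\Phi^\infty$, and the identification of $C^\infty_*$ with $\tilde C_*(\K)$ by projecting $\Phi^\infty(\sigma)$ onto its critical component --- this is precisely the route Forman takes. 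Your trace observation that $\Phi-\Phi^\infty$ is nilpotent (since $\Phi$ and $\Phi^\infty$ commute, $(\Phi-\Phi^\infty)^k=\Phi^k-\Phi^\infty$) and so $\operatorname{rank}C^\infty_k = \operatorname{tr}\Phi_k = m_k$ is a clean way to count, and the sign bookkeeping does close: each application of $\partial V$ along a $\V$-path contributes $-\langle\beta_i,\alpha_i\rangle\langle\beta_i,\alpha_{i+1}\rangle$, whose product over the path is exactly $m(\gamma)$ as in Equation~(\ref{mult}).

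Two small imprecisions worth tidying. First, the induced differential on $\tilde C_k(\K)$ under the isomorphism $\psi(\tau)=\Phi^\infty(\tau)$ is $\psi^{-1}\circ\partial\circ\psi$, and since $\psi^{-1}$ is projection onto critical components, the formula is ``take the critical part of $\partial\Phi^\infty(\tau)$'' rather than $\Phi^\infty(\partial\Phi^\infty(\tau))$ as written (the latter lands back in $C^\infty_{k-1}$, not $\tilde C_{k-1}(\K)$); the two agree after the projection, but as stated the displayed expression doesn't have the right codomain. Second, the stabilization claim needs the explicit well-founded order you gesture at --- Forman uses a partial order under which $\V$-paths strictly decrease --- and without it the assertion that ``iterating terminates'' is the one point where acyclicity of $\V$ is actually consumed, so it deserves to be made precise. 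Neither issue is a gap in the idea; both are the places Forman spends the most effort, and you correctly flag them as the hard parts.
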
 
	
	\section{Construction of a gradient vector field on $M_n$}\label{sec-cons}
	In this section, we construct a gradient vector field on $M_n$ for $n \ge 5$, which doesn't admit any critical simplices of dimension up to (and including) $\nu_n -1$, except one unavoidable critical $0$-simplex. For the rest of the article, we implicitly assume $n \ge 5$ whenever we talk about $M_n$ in general.
	
	For a positive integer $k$, we denote the set $\{1,\ldots,k\}$ by $[k]$. We partition the vertex set $V(K_n)$ into $\ceil{\frac{n}{3}}$ sets, and label the vertices of $K_n$ depending on the part they belong to as below.
	\begin{enumerate}
		\item For $n=3m$, $V(K_n) = V_1 \sqcup \ldots \sqcup V_m$, where $V_i = \{v^{(i)}_{1}, v^{(i)}_{2}, v^{(i)}_{3}\}$ for all $i \in [m]$.
		\item For $n=3m+1$, $V(K_n) = V_1 \sqcup \ldots \sqcup V_{m+1}$, where $V_i = \{v^{(i)}_{1}, v^{(i)}_{2}, v^{(i)}_{3}\}$ for all $i \in [m]$, and $V_{m+1} = \{v^{(m+1)}_{1}\}$.
		\item For $n=3m+2$, $V(K_n) = V_1 \sqcup \ldots \sqcup V_{m+1}$, where $V_i = \{v^{(i)}_{1}, v^{(i)}_{2}, v^{(i)}_{3}\}$ for all $i \in [m]$, and $V_{m+1} = \{v^{(m+1)}_{1},v^{(m+1)}_{2}\}$.
	\end{enumerate}
	We call an edge $e$ an \emph{$i$-level edge} (or simply, a \emph{levelled-edge}) if both of its endvertices are in the same $V_i$ for some $i$. Otherwise, if endvertices of $e$ are in $V_i$ and $V_j$ with $i\ne j$, then $e$ is called a \emph{cross-edge} (between $V_i$ and $V_j$) (see Fig.~\ref{v-part}).
	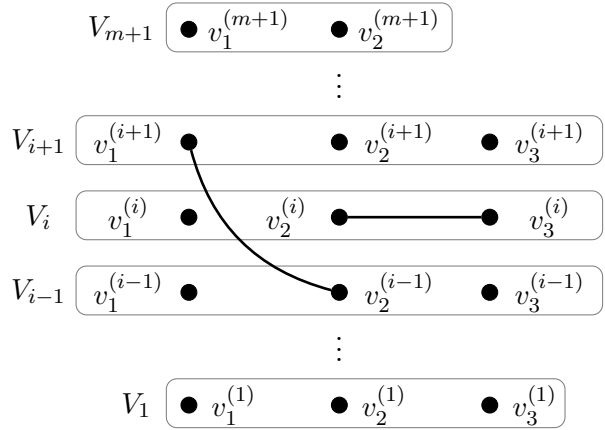
\begin{figure}[h!]
		\centering
		\begin{tikzpicture}
			
			\draw[gray, rounded corners] (1.7,-0.3) rectangle ++(5.3,0.65);\node at (1.3,0) {$V_{1}$};
			\vertex (v11) at (2,0) {};\node at (2.6,0) {$v^{(1)}_{1}$};
			\vertex (v12) at (4,0) {};\node at (4.6,0) {$v^{(1)}_{2}$};
			\vertex (v13) at (6,0) {};\node at (6.6,0) {$v^{(1)}_{3}$};
			
			\node at (4,0.85) {$\vdots$};
			
			\draw[gray, rounded corners] (0.5,1.2) rectangle ++(7,0.65);\node at (0,1.5) {$V_{i-1}$};
			\vertex (v21) at (2,1.5) {};\node at (1.2,1.5) {$v^{(i-1)}_{1}$};
			\vertex (v22) at (4,1.5) {};\node at (4.8,1.5) {$v^{(i-1)}_{2}$};
			\vertex (v23) at (6,1.5) {};\node at (6.8,1.5) {$v^{(i-1)}_{3}$};
			
			\draw[gray, rounded corners] (0.5,2.2) rectangle ++(7,0.65);\node at (0,2.5) {$V_{i}$};
			\vertex (v31) at (2,2.5) {};\node at (1.2,2.5) {$v^{(i)}_{1}$};
			\vertex (v32) at (4,2.5) {};\node at (3.3,2.5) {$v^{(i)}_{2}$};
			\vertex (v33) at (6,2.5) {};\node at (6.8,2.5) {$v^{(i)}_{3}$};
			
			\draw[gray, rounded corners] (0.5,3.2) rectangle ++(7,0.65);\node at (0,3.5) {$V_{i+1}$};
			\vertex (v41) at (2,3.5) {};\node at (1.2,3.5) {$v^{(i+1)}_{1}$};
			\vertex (v42) at (4,3.5) {};\node at (4.8,3.5) {$v^{(i+1)}_{2}$};
			\vertex (v43) at (6,3.5) {};\node at (6.8,3.5) {$v^{(i+1)}_{3}$};
			
			\node at (4,4.35) {$\vdots$};
			
			\draw[gray, rounded corners] (1.7,4.7) rectangle ++(3.8,0.65);\node at (1.1,5) {$V_{m+1}$};
			\vertex (v51) at (2,5) {};\node at (2.8,5) {$v^{(m+1)}_{1}$};
			\vertex (v52) at (4,5) {};\node at (4.8,5) {$v^{(m+1)}_{2}$};
			
			\path [line width=1pt]
			(v32) edge (v33)
			;
			\path [line width=1pt, bend left]
			(v22) edge (v41)
			;
		\end{tikzpicture}
		\caption{The partition of $V(K_{3m+2})$ into $m+1$ levels. Here, $v_2^{(i)} v_3^{(i)}$ is an $i$-level edge and $v_2^{(i-1)}v_1^{(i+1)}$ is a cross-edge between $V_{i-1}$ and $V_{i+1}$.}
		\label{v-part}
	\end{figure}

	First, we define the following discrete vector fields on $M_n$:
	\begin{align*}
		\M'_1 &= \left\{\left(\alpha,\alpha \sqcup \{v^{(1)}_{j}v^{(1)}_{k}\}\right) :  \alpha \in M_n, \alpha \text{ covers only } v^{(1)}_i \text{ of }V_1, \{i,j,k\}=[3]\right\},\\
		\M''_1 &= \left\{\left(\alpha,\alpha \sqcup \{v^{(1)}_{2}v^{(1)}_{3}\}\right) : \alpha \in M_n, \alpha  \text{ leaves entire } V_1 \text{ uncovered}\right\}
	\end{align*}
	(see Fig.~\ref{m1'} and Fig.~\ref{m1''}).
		\begin{figure}[H]
			\centering
		\begin{tikzpicture}
			\node at (4,-1) {$\alpha$};
			
			\draw[gray, rounded corners] (1.7,-0.3) rectangle ++(5.3,0.65);\node at (1.4,0) {$V_{1}$};
			\vertex (v11) at (2,0) {};\node at (2.6,0) {$v^{(1)}_{1}$};
			\vertex (v12) at (4,0) {};\node at (4.6,0) {$v^{(1)}_{2}$};
			\vertex (v13) at (6,0) {};\node at (6.6,0) {$v^{(1)}_{3}$};
			
			\node at (4.5,0.85) {$\vdots$};
			
			\vertex (v) at (3,2) {};\node at (3.4,2) {$v$};
			
			\path [line width=1pt]
			
			(v12) edge (v)
			;    
			
			\draw [to reversed-to](7.2,1) -- (8.2,1);
			
			\node at (11,-1.2) {$\alpha \sqcup \{v^{(1)}_1v^{(1)}_3\}$};
			
			\draw[gray, rounded corners] (8.7,-0.3) rectangle ++(5.3,0.65);\node at (8.4,0) {$V_{1}$};
			\vertex (w11) at (9,0) {};\node at (9.7,0) {$v^{(1)}_{1}$};
			\vertex (w12) at (11,0) {};\node at (11.6,0) {$v^{(1)}_{2}$};
			\vertex (w13) at (13,0) {};\node at (13.6,0) {$v^{(1)}_{3}$};
			
			\node at (11.5,0.85) {$\vdots$};
			
			\vertex (w) at (10,2) {};\node at (10.4,2) {$v$};
			
			\path [line width=1pt]
			(w12) edge (w)
			;
			\path [line width=1pt, bend right]
			(w11) edge (w13)
			;
		\end{tikzpicture}
		\caption{The matching $\alpha$ covers only $v^{(1)}_2$ of $V_1$. Thus, $\alpha$ is paired off with $\alpha \sqcup \{v^{(1)}_1v^{(1)}_3\}$ in $\M'_1$.}
		\label{m1'}
	\end{figure}
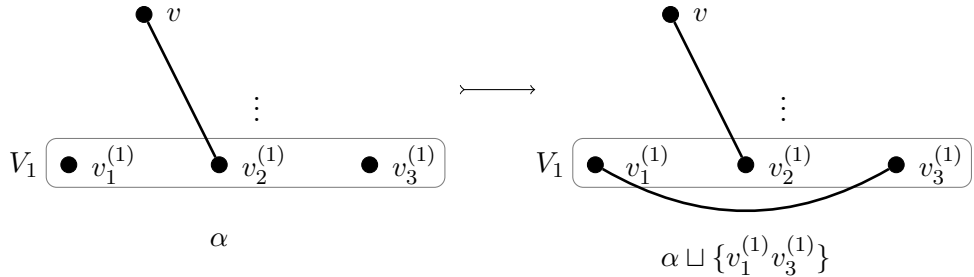

		\begin{figure}[!ht]
			\centering
		\begin{tikzpicture}
			\node at (4,-1) {$\alpha$};
			
			\draw[gray, rounded corners] (1.7,-0.3) rectangle ++(5.3,0.65);\node at (1.4,0) {$V_{1}$};
			\vertex (v11) at (2,0) {};\node at (2.6,0) {$v^{(1)}_{1}$};
			\vertex (v12) at (4,0) {};\node at (4.6,0) {$v^{(1)}_{2}$};
			\vertex (v13) at (6,0) {};\node at (6.6,0) {$v^{(1)}_{3}$};
			
			\node at (4.5,0.85) {$\vdots$};
			
			\draw [to reversed-to](7.2,1) -- (8.2,1);
			
			\node at (11,-1) {$\alpha \sqcup \{v^{(1)}_2v^{(1)}_3\}$};
			
			\draw[gray, rounded corners] (8.7,-0.3) rectangle ++(5.3,0.65);\node at (8.4,0) {$V_{1}$};
			\vertex (w11) at (9,0) {};\node at (9.6,0) {$v^{(1)}_{1}$};
			\vertex (w12) at (11,0) {};\node at (10.5,0) {$v^{(1)}_{2}$};
			\vertex (w13) at (13,0) {};\node at (13.6,0) {$v^{(1)}_{3}$};
			
			\node at (11.5,0.85) {$\vdots$};
			
			\path [line width=1pt]
			
			(w12) edge (w13)
			;
		\end{tikzpicture}
		\caption{The matching $\alpha$ leaves entire $V_1$ uncovered. Thus, $\alpha$ is paired off with $\alpha \sqcup \{v^{(1)}_2v^{(1)}_3\}$ in $\M''_1$.}
		\label{m1''}
	\end{figure}
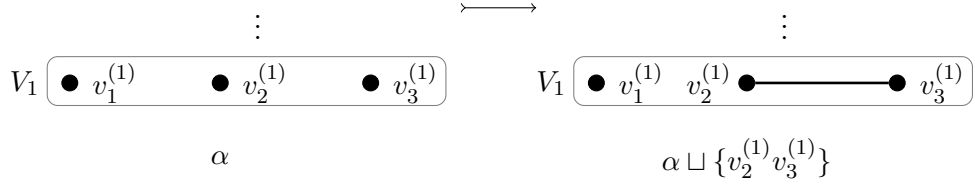
	
	Let $\M_1 = \M'_1 \sqcup \M''_1$. We note that $\M_1$ is also a discrete vector field on $M_n$. Let $\U_1$ denote the set of all matchings in $K_n$ that are not paired off in $\M_1$, i.e., $\U_1 = \{\alpha \in M_n : \alpha$ does not appear in any pair of $\M_1\}$. We observe that $\alpha \in \U_1$ if and only if one of the following holds:
	\begin{enumerate}[(i)]
		\item at least two vertices of $V_1$ are matched by $\alpha$ with vertices outside $V_1$,
		\item $\alpha$ contains the edge $v^{(1)}_1v^{(1)}_i$, where $i \in \{2,3\}$ and $\alpha$ doesn't cover the vertex $v \in V_1 \setminus \{v^{(1)}_1, v^{(1)}_i\}$
	\end{enumerate}
	(see Fig.~\ref{u1}).
	\begin{figure}[!ht]
		\centering
		\begin{tikzpicture}
			\node at (4,-1) {$\alpha_1$};
			
			\draw[gray, rounded corners] (1.7,-0.3) rectangle ++(5.3,0.65);\node at (1.4,0) {$V_{1}$};
			\vertex (v11) at (2,0) {};\node at (2.6,0) {$v^{(1)}_{1}$};
			\vertex (v12) at (4,0) {};\node at (4.6,0) {$v^{(1)}_{2}$};
			\vertex (v13) at (6,0) {};\node at (6.6,0) {$v^{(1)}_{3}$};
			
			\node at (4.5,0.85) {$\vdots$};
			
			\vertex (v) at (3,2) {};\node at (3.4,2) {$v$};
			\vertex (w) at (5,2.5) {};\node at (5.4,2.5) {$w$};
			
			\path [line width=1pt]
			(v12) edge (v)
			(v13) edge (w)
			;

			\node at (11,-1.2) {$\alpha_2$};
			
			\draw[gray, rounded corners] (8.7,-0.3) rectangle ++(5.3,0.65);\node at (8.4,0) {$V_{1}$};
			\vertex (w11) at (9,0) {};\node at (9.7,0) {$v^{(1)}_{1}$};
			\vertex (w12) at (11,0) {};\node at (11.6,0) {$v^{(1)}_{2}$};
			\vertex (w13) at (13,0) {};\node at (13.6,0) {$v^{(1)}_{3}$};
			
			\node at (11.5,0.85) {$\vdots$};
			
			\path [line width=1pt, bend right]
			(w11) edge (w13)
			;
		\end{tikzpicture}
		\caption{The matching $\alpha_1$ matches $v^{(1)}_2$ and $v^{(1)}_3$ of $V_1$ with vertices outside $V_1$, whereas $\alpha_2$ contains the edge $v^{(1)}_1v^{(1)}_3$, but leaves $v^{(1)}_2$ uncovered. Thus, both $\alpha_1, \alpha_2 \in \U_1$.}
		\label{u1}
	\end{figure}
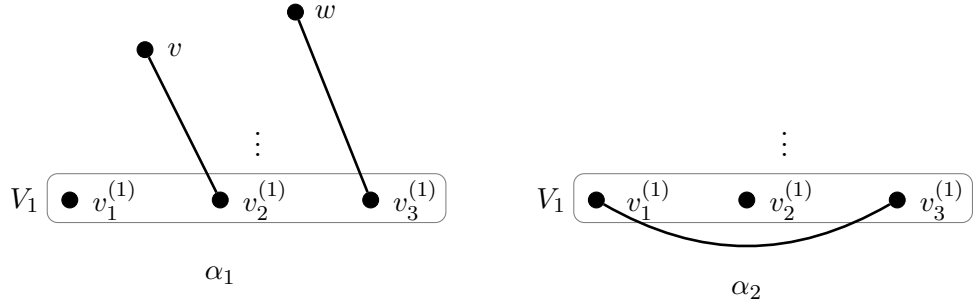

	Next, following the same scheme as before, we define the following discrete vector fields on $M_n$:
	\begin{align*}
		\M'_2 &= \left\{\left(\alpha,\alpha \sqcup \{v^{(2)}_{j}v^{(2)}_{k}\}\right) : \alpha \in \U_1, \alpha \text{ covers only } v^{(2)}_i \text{ of }V_2, \{i,j,k\}=[3]\right\},\\
		\M''_2 &= \left\{\left(\alpha,\alpha \sqcup \{v^{(2)}_{2}v^{(2)}_{3}\}\right) : \alpha \in \U_1, \alpha \text{ leaves entire } V_2 \text{ uncovered}\right\}.
	\end{align*}
	Let $\M_2 = \M'_2 \sqcup \M''_2$ (which is also a discrete vector field on $M_n$) and $\U_2 = \{\alpha \in M_n : \alpha$ does not appear in any pair of $\M_1 \sqcup \M_2\}$. Suppose $\alpha \in \U_1$. Then $\alpha \in \U_2$ if and only if one of the following holds:
	\begin{enumerate}[(i)]
		\item at least two vertices of $V_2$ are matched by $\alpha$ with vertices outside $V_2$,
		\item $\alpha$ contains the edge $v^{(2)}_1v^{(2)}_i$, where $i \in \{2,3\}$ and $\alpha$ doesn't cover the vertex $v \in V_2 \setminus \{v^{(2)}_1, v^{(2)}_i\}$.
	\end{enumerate}
		In general, following the same scheme as above, we get a sequence of discrete vector fields $\M_1,\ldots,\M_m$ on $M_n$ and a family of subsets $\U_1 \supseteq \U_2 \supseteq \ldots \supseteq \U_m$ of $M_n$ such that $\U_k = \{\alpha \in M_n : \alpha$ does not appear in any pair of $\M_1 \sqcup \ldots \sqcup \M_k\}$, for all $k \in [m]$. We note that for any $k \in \{2,\ldots,m\}$, the discrete vector field $\M_k$ may contain only  pairs of matchings of the form $(\alpha, \alpha \sqcup \{e\})$ where both $\alpha$ and $\alpha \sqcup \{e\}$ are in $\U_{k-1}$.
	
	If $n=3m+2$, we define another discrete vector field on $M_n$, viz., 
	\[\M_{m+1} = \left\{\left(\alpha, \alpha \sqcup \{v^{(m+1)}_{1}v^{(m+1)}_{2}\}\right) : \alpha \in \U_m, \alpha \text{ covers neither  } v^{(m+1)}_{1} \text{ nor }v^{(m+1)}_{2}\right\}.\]
	
	For $n=3m$ or $3m+1$, let $\M = \M_1 \sqcup \ldots \sqcup \M_{m}$ and for $n=3m+2$, let $\M = \M_1 \sqcup \ldots \sqcup \M_{m+1}$. We note that $\M$ as defined above is a collection of pairs of matchings in $K_n$, and thus $\M$ also depends on $n$. To make the notation less cumbersome, we avoid adding the parameter $n$ to $\M$.
	
	From the construction of $\M$, we make the following observation.
	\begin{obs}\label{obs1}
		For any $n$, the following hold.
		\begin{enumerate}
			\item $\M$ is a discrete vector field on $M_n$.
			\item If $(\alpha, \alpha \sqcup \{e\}) \in \M$, then $e$ is an $i$-level edge, for some $i \in [\ceil{\frac{n}{3}}]$.
		\end{enumerate}		
	\end{obs}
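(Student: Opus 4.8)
The plan is to verify directly from the definitions that $\M$ satisfies the three axioms of a discrete vector field, and that every adjoined edge lies within a single part of the partition, using only the nesting $\U_1 \supseteq \U_2 \supseteq \cdots$ together with how each $\M_k$ is built.

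I would dispose of part~(2) first, as it is immediate. Every pair of $\M$ belongs to some $\M_k$ (or to $\M_{m+1}$ when $n=3m+2$), and in each defining clause the edge adjoined to $\alpha$ has the form $v^{(k)}_a v^{(k)}_b$ with both endvertices in $V_k$: for $\M'_k$ and $\M''_k$ this is read straight off the formulas (displayed for $k=1,2$, and identical in general), and for $\M_{m+1}$ the adjoined edge is $v^{(m+1)}_1 v^{(m+1)}_2 \subseteq V_{m+1}$. Hence the adjoined edge is an $i$-level edge for the corresponding $i \in [\ceil{\frac{n}{3}}]$.

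For part~(1), axioms (i) and (ii) also follow by inspection: if $(\alpha,\beta)\in\M$ then $\beta=\alpha\sqcup\{e\}$ for a single edge $e$ whose endvertices are left uncovered by $\alpha$ (this is precisely the condition under which the pair is created), so $e\notin\alpha$, $\beta$ is again a matching of $K_n$, $\dim\beta=\dim\alpha+1$, and $\alpha\subsetneq\beta$. The real content is axiom (iii), that no simplex lies in two pairs of $\M$, and I would split this into two claims. \emph{Within a single $\M_k$:} using that a matching contains at most one edge inside the three-element set $V_k$, the simplices touched by $\M_k$ split into a ``domain'' class (matchings covering $0$ or exactly $1$ vertex of $V_k$, hence containing no internal edge of $V_k$) and a ``range'' class (matchings containing an internal edge of $V_k$, hence covering $2$ or $3$ of its vertices); these two classes are disjoint, every domain simplex is adjoined a unique internal edge of $V_k$ (the one prescribed by the relevant clause), and every range simplex determines a unique partner (delete its unique internal edge of $V_k$), so no simplex of $\M_k$ is in two pairs. \emph{Across distinct $\M_k$ and $\M_\ell$ with $k<\ell$:} every simplex occurring in a pair of $\M_\ell$ lies in $\U_{\ell-1}$ — for the domain simplex by definition of $\M_\ell$, and for the range simplex because adjoining an edge internal to $V_\ell$ alters none of the edges meeting $V_1\cup\cdots\cup V_{\ell-1}$, while membership in $\U_{\ell-1}$ depends only on those edges (a claim I would establish by induction on $\ell$, since the pairing rule at a level $j$ inspects only edges meeting $V_j$). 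Since $\U_{\ell-1}\subseteq\U_k$ and, by definition of $\U_k$, no element of $\U_k$ appears in a pair of $\M_1\sqcup\cdots\sqcup\M_k$, the families $\M_1,\M_2,\dots$ have pairwise disjoint supports. Combining the two claims gives axiom (iii), and hence that $\M$ is a discrete vector field.

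The hard part is really just the bookkeeping for axiom (iii): organizing matchings by their interaction with the fixed triple $V_k$ so that the $\M_k$-pairing is manifestly well-defined and fixed-point-free on simplices, and checking that ``being still unpaired'', i.e.\ membership in $\U_{k-1}$, is insensitive to edges lying inside later parts. The remaining assertions are direct consequences of the definitions.
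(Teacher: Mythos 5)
The paper states this observation without proof, presenting it as an immediate consequence of the construction, and your verification is correct and takes the only natural route: direct inspection of the defining clauses for part~(2) and the obvious axioms of a discrete vector field, plus the within-level/across-level case split for the ``at most one pair'' axiom. The one nontrivial ingredient you rightly isolate — that membership in $\U_{\ell-1}$ depends only on the edges meeting $V_1\cup\cdots\cup V_{\ell-1}$, so that a range simplex $\alpha\sqcup\{e\}$ with $e$ internal to $V_\ell$ stays in $\U_{\ell-1}$ — is exactly what the paper implicitly invokes in its remark that every pair of $\M_k$ has both members in $\U_{k-1}$, and your proposed induction on the level (noting that the rule at level $j$ adds only information about edges meeting $V_j$) does establish it.
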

	We now prove that $\M$ is a gradient vector field on $M_n$.
	\begin{proposition}
		For all $n$, the discrete vector field $\M$ is a gradient vector field on $M_n$.
	\end{proposition}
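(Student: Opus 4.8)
The plan is to assume for contradiction that $\M$ admits a nontrivial closed $\V$-path
\[
\alpha_0^{(d)} \rightarrowtail \beta_0^{(d+1)} \to \alpha_1^{(d)} \rightarrowtail \cdots \rightarrowtail \beta_k^{(d+1)} \to \alpha_{k+1}^{(d)} = \alpha_0
\]
(necessarily $k\ge 1$, since the defining condition $\alpha_{i+1}\neq\alpha_i$ rules out $k=0$), and to extract from it a strictly decreasing \emph{cyclic} sequence of positive integers. The first move is to pin the whole path down to a single pattern of cross-edges. For a matching $\gamma$ write $X(\gamma)$ for the set of cross-edges it contains. By Observation~\ref{obs1}(2) every pair $(\alpha,\alpha\sqcup\{e\})\in\M$ has $e$ a levelled-edge, so $X(\alpha)=X(\alpha\sqcup\{e\})$; hence $X(\beta_s)=X(\alpha_s)$ for all $s$, and since $\alpha_{s+1}\subsetneq\beta_s$ we get $X(\alpha_{s+1})\subseteq X(\beta_s)=X(\alpha_s)$. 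Going once around the closed path forces $X(\alpha_0)=X(\alpha_1)=\cdots=X(\alpha_{k+1})=X(\alpha_0)$, so there is a common set $S$ with $X(\alpha_s)=X(\beta_s)=S$ for all $s$; in particular every $\alpha_{s+1}$ is obtained from $\beta_s$ by deleting a \emph{levelled}-edge (deleting a cross-edge would shrink $X$). (This is just the patchwork/cluster principle applied to the poset map $\gamma\mapsto X(\gamma)$.)

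Next I would analyse what happens level by level inside this fixed cross-edge pattern. Write each $\alpha_s=S\sqcup T^{(s)}$, where $T^{(s)}$ is a set of levelled-edges, at most one on each part $V_i$. For $i\in[\ceil{n/3}]$, whether a matching $\gamma=S\sqcup T$ of this shape lies in a pair of $\M_i$ (given it is in $\U_{i-1}$), and whether it is then the smaller or larger element of that pair, depends only on the restriction of $S\cup T$ to $V_i$: call $\gamma$ \emph{$i$-active} when the $V_i$-configuration is one of ``$V_i$ uncovered'', ``exactly one vertex of $V_i$ covered'', ``the levelled-edge $v^{(i)}_2v^{(i)}_3$ with $v^{(i)}_1$ uncovered'', or ``a cross-edge together with a levelled-edge''; call $\gamma$ \emph{$i$-small} in the first two cases (it is then the smaller element) and \emph{$i$-big} in the last two. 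Because $\M_1,\M_2,\dots$ are layered, $\alpha_s$ lies in a pair of $\M_{p_s}$ where $p_s$ is the \emph{least} $i$ with $\alpha_s$ $i$-active, and — being the smaller element — $\alpha_s$ is $p_s$-small, with $\beta_s=\alpha_s\sqcup\{e_s\}$ for the prescribed levelled-$p_s$-edge $e_s$. Since a $p_s$-small matching contains no levelled-$p_s$-edge, the deleted edge $e'_s\neq e_s$ of $\beta_s$ is a levelled-$q_s$-edge with $q_s\neq p_s$; thus passing from $\alpha_s$ to $\alpha_{s+1}$ adds $e_s$ on $V_{p_s}$ and deletes $e'_s$ from $V_{q_s}$ and leaves every other part of the partition untouched.

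The monovariant is then $p_s$, and I claim $p_{s+1}=q_s<p_s$ for every $s$. For $i<\min(p_s,q_s)$ the matchings $\alpha_{s+1}$ and $\alpha_s$ agree on $V_i$, which is not $i$-active (as $i<p_s$), so $p_{s+1}\ge\min(p_s,q_s)$. If $q_s>p_s$, then $\alpha_{s+1}$ agrees with $\beta_s$ on $V_{p_s}$ (the deletion is on a different part), and $\beta_s$ is $p_s$-big; this forces $p_{s+1}=p_s$ and $\alpha_{s+1}$ to be $p_{s+1}$-big, hence the \emph{larger} element of its $\M$-pair, contradicting $(\alpha_{s+1},\beta_{s+1})\in\M$ (or, for $s+1=k+1$, contradicting $(\alpha_0,\beta_0)\in\M$). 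So $q_s<p_s$. In that case $\alpha_s$ contains a levelled-$q_s$-edge yet is not $q_s$-active, and the only $\U_{q_s}$-configuration of a three-vertex part that uses a levelled-edge is case (ii) in the description of $\U_{q_s}$, namely $\{v^{(q_s)}_1v^{(q_s)}_i\}$ with the third vertex of $V_{q_s}$ uncovered. Deleting that edge leaves $V_{q_s}$ entirely uncovered, so $\alpha_{s+1}$ is $q_s$-small, in particular $q_s$-active, giving $p_{s+1}\le q_s$; together with $p_{s+1}\ge\min(p_s,q_s)=q_s$ this yields $p_{s+1}=q_s<p_s$. Hence $p_0>p_1>\cdots>p_{k+1}$, while $\alpha_{k+1}=\alpha_0$ forces $p_{k+1}=p_0$ — impossible for $k\ge1$. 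Therefore no nontrivial closed $\V$-path exists and $\M$ is a gradient vector field.

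The part I expect to require the most care is the bookkeeping in the second and third steps: one must check that ``$i$-active / $i$-small / $i$-big'' genuinely depends only on the configuration of $V_i$ (so that adding or deleting a levelled-$j$-edge with $j\neq i$ cannot change the status at $V_i$), one must run through the handful of configurations of the three vertices of $V_i$ to confirm both that $\beta_s$ is $p_s$-big and that case (ii) is the only $\U_{q_s}$-configuration containing a levelled-edge, and one must treat the short part $V_{m+1}$ in the case $n=3m+2$ separately — its two vertices support no $\U$-configuration involving a levelled-edge, so the situation $q_s=m+1$ simply never occurs, and the argument goes through unchanged.
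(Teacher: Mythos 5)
Your proof is correct and follows the same strategy as the paper's: along any $\M$-path the index of the part $V_i$ at which the current pairing takes place strictly decreases, so no nontrivial closed path can exist. You organize the contradiction as a strictly decreasing monovariant $p_0>p_1>\cdots$ on a cycle and dispatch the cross-edge possibility for \emph{every} step at once via the monotonicity of $X(\alpha_s)$, whereas the paper tracks the single deleted edge $e'_0$ and argues locally step by step (invoking ``a similar argument'' for later steps); but the substantive lemmas --- deleted edges are levelled, at a strictly smaller index, and necessarily of the form $v^{(q)}_1v^{(q)}_k$ with the third vertex uncovered --- coincide, as does the inductive fact you correctly flag (that $\U_j$-membership and $i$-small/$i$-big status depend only on the restrictions to $V_1,\dots,V_j$), so this is a cleaner packaging of the same argument rather than a different one.
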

	\begin{proof}
		Let, if possible, $\alpha_0, \beta_0, \alpha_1, \beta_1, \ldots, \alpha_r, \beta_r, \alpha_{r+1}=\alpha_0$ be a nontrivial closed $\M$-path, and let us denote it by $\gamma$. Let $\beta_0 = \alpha_0 \sqcup \{e_0\}$ and $\alpha_{1} = \beta_0 \setminus \{e'_0\}$ (with $e_0 \ne e'_0$).	
		\begin{center}
			\begin{tikzcd}
			\alpha_0 \ar[r,tail,"\sqcup \{e_0\}"] & \beta_0 \ar[r,"-\{e'_0\}"] & \alpha_1 \ar[r,tail] & \beta_1 \ar[r] & \cdots \ar[r] & \alpha_r \ar[r,tail] & \beta_r \ar[r] & \alpha_{r+1}=\alpha_0
		\end{tikzcd}
		\end{center}
		From Observation~\ref{obs1}, it follows that $e_0$ is a levelled-edge. Now, if $e'_0$ is a cross-edge, then from the construction of $\M$, it follows that $e'_0 \notin \alpha_p$, for all $p \ge 1$. This contradicts the assumption that $\gamma$ is a nontrivial closed $\M$-path as $e'_0 \in \alpha_0$. Thus, $e'_0$ is also a levelled-edge.
		
		Let $e_0$ and $e'_0$ be an $i_0$-level edge and an $i_1$-level edge for some $i_0,i_1 \in [\ceil{\frac{n}{3}}]$, respectively. Thus, it follows that $(\alpha_0,\beta_0) \in \M_{i_0}$ and both $\alpha_0, \beta_0 \in \U_k$ for all $k<i_0$.
		
		We note that $i_0 \ne i_1$ as both $e_0,e'_0 \in \beta_0$. Now, if $i_1>i_0$, then we observe that $\alpha_{1}$ matches the vertices of $V_1\sqcup \ldots \sqcup V_{i_0}$ in \emph{exactly the same manner} as $\beta_0$ does. This implies, just as $\beta_0$, the matching $\alpha_{1} \in \U_k$ for all $k<i_0$. Since $(\alpha_0=\beta_0 \setminus \{e_0\},\beta_0) \in \M$, we have $(\alpha_{1} \setminus \{e_0\}, \alpha_{1}) \in \M$, a contradiction as  $(\alpha_{1}, \beta_{1}) \in \M$. Therefore, we have $i_1<i_0$.
		
		If $e'_0 = v^{(i_1)}_kv^{(i_1)}_\ell$ (where $k, \ell \in [3]$) and the vertex $v^{(i_1)}_m$ (where $m \in [3]\setminus \{k, \ell\}$) is matched by $\alpha_0$ (with a vertex outside $V_{i_1}$), then from the properties of simplices appearing in $\U_{i_1}$, we have $\alpha_{0} \notin \U_{i_1}$, a contradiction. Similarly, if $e'_0 = v^{(i_1)}_2v^{(i_1)}_3$ and the vertex $v^{(i_1)}_1$ is not covered $\alpha_0$, then again  we have $\alpha_{0} \notin \U_{i_1}$, a contradiction.
		
		So suppose $e'_0 = v^{(i_1)}_1v^{(i_1)}_k$ (where $k \in \{2,3\}$) and the vertex $v^{(i_1)}_\ell$ (where $\ell \in [3] \setminus \{1,k\}$) is not covered by $\alpha_0$, and thus not covered by $\alpha_1$ as well. In this case, $\beta_1 = \alpha_1 \sqcup \{v^{(i_1)}_2v^{(i_1)}_3\}$ and $(\alpha_1,\beta_1) \in \M_{i_1}$ (we note that $\beta_{1}$ doesn't contain $e'_0$, and thus neither does $\alpha_2$). By a similar argument as before, $\alpha_2$ is of the form $\beta_1 \setminus \{e'_1\}$ and $\beta_2$ is of the form $\alpha_2 \sqcup \{e_2\}$, where $e'_1$ and $e_2$ are two $i_2$-level edges (distinct from each other) with $i_2<i_1$. Thus, $e_2 \ne e'_0$, and consequently $e'_0 \notin \beta_2, \alpha_3$.	
		\begin{center}
			\begin{tikzcd}
			\alpha_0 \ar[r,tail,"\sqcup \{e_0\}","(i_0)"'] & \beta_0 \ar[r,"-\{e'_0 = v^{(i_1)}_1v^{(i_1)}_k\}","(i_1)"'] & [4em]\alpha_1 \ar[r,tail,"\sqcup \{v^{(i_1)}_2v^{(i_1)}_3\}","(i_1)"'] & [3em]\beta_1 \ar[r,"-\{e'_1\}","(i_2)"'] & \alpha_2 \ar[r,tail,"\sqcup \{e_2\}","(i_2)"'] & \beta_2 \ar[r] & \cdots \ar[r] & \alpha_{r+1}
		\end{tikzcd}
		(with $i_0>i_1>i_2>\cdots$ and so on)
		\end{center}
		By an inductive argument, we conclude that $e'_0 \notin \alpha_p, \beta_p$, for all $p \ge 1$. Thus, $\alpha_{r+1}\ne \alpha_0$ as $e'_0 \in \alpha_0$, which is a contradiction.
	\end{proof}

	We now show that the constructed gradient vector field satisfies the requirements of Theorem~\ref{nolowcrit}.
	
	\begin{proof}[of Theorem~\ref{nolowcrit}]
		We claim that the gradient vector field $\M$ on $M_n$ doesn't admit critical simplices of dimension up to $\nu_n-1$ for any $n$, except one $0$-simplex. From the construction of $\M$ on $M_n$, we observe that if $\alpha \in M_n$ is not paired off in $\M$, then for each $i \in \ceil{\frac{n}{3}}$, the matching $\alpha$ leaves at most one vertex of $V_i$ uncovered. Therefore, $\alpha$ leaves at most $\ceil{\frac{n}{3}}$ vertices of the graph $K_n$ uncovered. This implies 
		\[|\alpha| \ge \frac{1}{2}\left(n - \left\lceil{\frac{n}{3}}\right\rceil\right) > \left\lfloor{\frac{n+1}{3}}\right\rfloor -1 = \nu_n,\]
		i.e.,  the dimension of $\alpha$ is strictly greater than $\nu_n-1$.
		
		Also, the $0$-simplex $\xi \coloneqq \{v^{(1)}_2v^{(1)}_3\}$ is paired off with $\emptyset$ in $\M$, and thus $\xi$ is the only critical $0$-simplex. Therefore, all simplices of $M_n$ of dimension up to $\nu_n-1$, except $\xi$, are not critical with respect to $\M$.
	\end{proof}
	By Theorem~\ref{funda}, it follows that $M_n$ is homotopy equivalent to a CW complex with no cells of dimension up to $\nu_n - 1$, except one $0$-cell. Thus, Theorem~\ref{homcon} follows as a corollary of Theorem~\ref{nolowcrit}. However, we remark that the existence of such a gradient vector field is in fact a stronger notion. Even when a complex is homotopically $k$-connected, it is not guaranteed that there is a gradient vector field on it, with no critical simplices (except one $0$-simplex) of dimension up to $k$. A well-known example is the \emph{dunce hat}. It is a contractible space (and thus simply connected), but no gradient vector field, with a $0$-simplex as the only critical simplex, can be assigned on any triangulation of the dunce hat (\cite{ayala,whitehead,zeeman}).

	\begin{example}[Homotopy type of $M_8$]\label{m8top}
		In order to determine the homotopy type of $M_8$, we consider the gradient vector field $\M$ defined on $M_8$, and extend it to a perfect gradient vector field as follows. Let $\alpha^\circ$ be a $3$-simplex in $M_8$ (i.e., $\alpha^\circ$ is a perfect matching in $K_8$), which is critical with respect to $\M$. From the construction of $\M$, it follows that all the edges in $\alpha^\circ$ are cross-edges. Moreover, for any $e \in \alpha^\circ$, the $2$-simplex $\alpha^\circ \setminus \{e\}$ is also critical with respect to $\M$. Now, since $\alpha^\circ$ is a perfect matching containing only cross-edges, $\alpha^\circ$ matches $v^{(3)}_1$ with a vertex outside $V_3$, say $v^\circ$. So $v^\circ$ is of the form $v^{(i)}_j$, for some $i \in [2]$ and $j \in [3]$. We extend $\M$ to a discrete vector field $\M^\circ$ by adding the new pairs of the form $(\alpha^\circ \setminus \{v^{(3)}_1v^\circ\}, \alpha^\circ)$ to $\M$ (see Fig.~\ref{m8}), for each $3$-simplex $\alpha^\circ$ which is critical with respect to $\M$.
		\begin{figure}[h!]
			\centering
			\begin{tikzpicture}
				\begin{scope}[scale=0.88, transform shape]
				\node at (4,1.7) {$\alpha^\circ$};
				
				\draw[gray, rounded corners] (1,2.2) rectangle ++(6,0.65);\node at (0.7,2.5) {$V_{1}$};
				\vertex (v11) at (2,2.5) {};\node at (1.5,2.5) {$v^{(1)}_{1}$};
				\vertex (v12) at (4,2.5) {};\node at (3.5,2.5) {$v^{(1)}_{2}$};
				\vertex (v13) at (6,2.5) {};\node at (6.6,2.5) {$v^{(1)}_{3}$};
				
				\draw[gray, rounded corners] (1,3.2) rectangle ++(6.8,0.65);\node at (0.7,3.5) {$V_{2}$};
				\vertex (v21) at (2,3.5) {};\node at (1.5,3.5) {$v^{(2)}_{1}$};
				\vertex (v22) at (4,3.5) {};\node at (3.5,3.5) {$v^{(2)}_{2}$};
				\vertex (v23) at (6,3.5) {};\node at (7,3.5) {$v^{(2)}_{3}=v^\circ$};
				
				\draw[gray, rounded corners] (1,4.2) rectangle ++(4,0.65);\node at (0.7,4.5) {$V_{3}$};
				\vertex (v31) at (2,4.5) {};\node at (1.5,4.5) {$v^{(3)}_{1}$};
				\vertex (v32) at (4,4.5) {};\node at (4.6,4.5) {$v^{(3)}_{2}$};
				
				\path [line width=1pt]
				(v11) edge (v32)
				(v12) edge (v22)
				(v13) edge (v21)
				(v23) edge (v31)
				;
				
				\draw [to reversed-to](9.2,3.5) -- (8,3.5);
				
				\node at (13,1.7) {$\alpha^\circ \setminus \{v^{(3)}_1v^\circ\}$};
				
				\draw[gray, rounded corners] (10,2.2) rectangle ++(6,0.65);\node at (9.7,2.5) {$V_{1}$};
				\vertex (w11) at (11,2.5) {};\node at (10.5,2.5) {$v^{(1)}_{1}$};
				\vertex (w12) at (13,2.5) {};\node at (12.5,2.5) {$v^{(1)}_{2}$};
				\vertex (w13) at (15,2.5) {};\node at (15.6,2.5) {$v^{(1)}_{3}$};
				
				\draw[gray, rounded corners] (10,3.2) rectangle ++(6.8,0.65);\node at (9.7,3.5) {$V_{2}$};
				\vertex (w21) at (11,3.5) {};\node at (10.5,3.5) {$v^{(2)}_{1}$};
				\vertex (w22) at (13,3.5) {};\node at (12.5,3.5) {$v^{(2)}_{2}$};
				\vertex (w23) at (15,3.5) {};\node at (16,3.5) {$v^{(2)}_{3}=v^\circ$};
				
				\draw[gray, rounded corners] (10,4.2) rectangle ++(4,0.65);\node at (9.7,4.5) {$V_{3}$};
				\vertex (w31) at (11,4.5) {};\node at (10.5,4.5) {$v^{(3)}_{1}$};
				\vertex (w32) at (13,4.5) {};\node at (13.6,4.5) {$v^{(3)}_{2}$};
				
				\draw[dashed,gray] (w23)--(w31);
				\path [line width=1pt]
				(w11) edge (w32)
				(w12) edge (w22)
				(w13) edge (w21)
				;
			\end{scope}
			\end{tikzpicture}
			\caption{The critical (with respect to $\M$) $3$-simplex $\alpha^\circ$ is paired off with the critical $2$-simplex $\alpha^\circ \setminus \{v^{(3)}_1v^\circ\}$ in $\M^\circ$.}
			\label{m8}
		\end{figure}
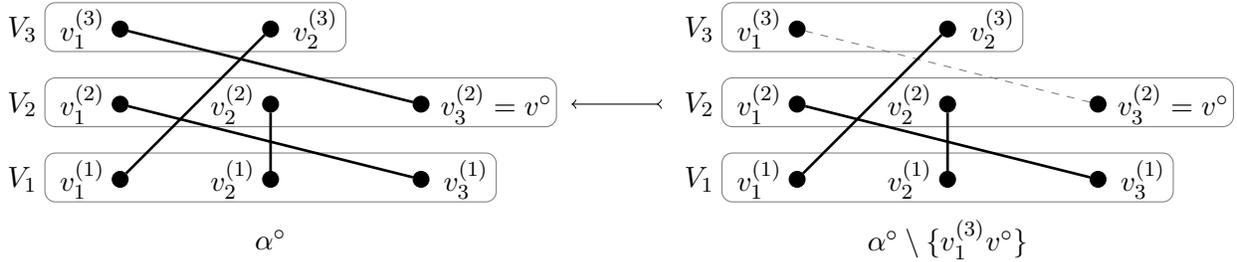
	
		Any $\M^\circ$-path $\gamma$, which is not an $\M$-path, contains a pair $(\alpha_i^{(2)},\beta_i^{(3)})\in \M^\circ \setminus \M$. If $\alpha_{i+1}$ ($\ne \alpha_i$) is any $2$-simplex contained in $\beta_i$, then $\gamma$ does not extend beyond such an $\alpha_{i+1}$, as $\beta_i$ is the only $3$-simplex containing $\alpha_{i+1}$. This implies $\M^\circ$ is also a gradient vector field on $M_8$, and moreover, all the $0$-simplices,  $1$-simplices, and $3$-simplices of $M_8$ are paired off in $\M^\circ$. Thus, only critical simplices of $M_8$ (with respect to $\M^\circ$) are some $2$-simplices and exactly one $0$-simplex, viz., the matching $\{v^{(1)}_2v^{(1)}_3\}$. The number of critical $2$-simplices may be determined from the Euler characteristic of $M_8$ (or, alternatively, by direct counting). If $(f_0,\ldots,f_3)$ is the $f$-vector of $M_8$, then from Equation~(\ref{fv}),
		$f_0 =28$, $f_1 =210$, ${f_2= 420}$, $f_3= 105$, and thus $\chi(M_8) = f_0-f_1+f_2-f_3 = 133$. Therefore, by Theorem~\ref{wedge}, we conclude that $M_8$ is homotopy equivalent to a wedge of 132 spheres of dimension 2.
	\end{example}

\section{Homology groups of $M_7$}\label{hom-m7}

\subsection{Construction of a near-optimal gradient vector field on $M_7$}\label{m7nearopt}
We now consider the matching complex $M_7$ as an example, and determine its homotopy type using techniques developed in this article. Throughout the rest of this article, $\M$ stands for the gradient vector field on $M_7$ in particular, as constructed in Section~\ref{sec-cons}.

We extend the gradient vector field $\M$ to a more useful one as follows. Let $\alpha^*$ be a critical (with respect to $\M$) $1$-simplex containing two cross-edges between $V_1$ and $V_2$. Let $v^*$ be only vertex of $V_1$ that is left uncovered by $\alpha^*$. We observe that $\alpha^*\sqcup \{v^{(3)}_1v^*\}$ is a critical $2$-simplex with respect to $\M$. Thus, we may extend the gradient vector field $\M$ to the discrete vector field $\M^*$ by adding the new pairs of the form $(\alpha^*, \alpha^* \sqcup \{v^{(3)}_1v^*\})$ to $\M$ (see Fig.~\ref{extended}), for each critical (with respect to $\M$) $1$-simplex $\alpha^*$ containing two cross-edges between $V_1$ and $V_2$.

\begin{figure}[h!]
	\centering
	\begin{tikzpicture}
		\begin{scope}
			\draw[gray, rounded corners] (1,2.2) rectangle ++(6,0.65);\node at (0.7,2.5) {$V_{1}$};
			\vertex (v11) at (2,2.5) {};\node at (1.5,2.5) {$v^{(1)}_{1}$};
			\vertex (v12) at (4,2.5) {};\node at (4.9,2.5) {$v^{(1)}_{2}=v^*$};
			\vertex (v13) at (6,2.5) {};\node at (6.6,2.5) {$v^{(1)}_{3}$};
			
			\draw[gray, rounded corners] (1,3.2) rectangle ++(6,0.65);\node at (0.7,3.5) {$V_{2}$};
			\vertex (v21) at (2,3.5) {};\node at (1.5,3.5) {$v^{(2)}_{1}$};
			\vertex (v22) at (4,3.5) {};\node at (4.8,3.5) {$v^{(2)}_{2}$};
			\vertex (v23) at (6,3.5) {};\node at (6.6,3.5) {$v^{(2)}_{3}$};
			
			\draw[gray, rounded corners] (1,4.2) rectangle ++(1.5,0.65);\node at (0.7,4.5) {$V_{3}$};
			\vertex (v31) at (2,4.5) {};\node at (1.5,4.5) {$v^{(3)}_{1}$};
			
			\path [line width=1pt]
			(v11) edge (v21)
			(v13) edge (v22)
			;
		\end{scope}
		
		\draw [to reversed-to](7.2,3.5) -- (8.2,3.5);
		
		\begin{scope}[xshift=8cm]
			\draw[gray, rounded corners] (1,2.2) rectangle ++(6,0.65);\node at (0.7,2.5) {$V_{1}$};
			\vertex (v11) at (2,2.5) {};\node at (1.5,2.5) {$v^{(1)}_{1}$};
			\vertex (v12) at (4,2.5) {};\node at (4.9,2.5) {$v^{(1)}_{2}=v^*$};
			\vertex (v13) at (6,2.5) {};\node at (6.6,2.5) {$v^{(1)}_{3}$};
			
			\draw[gray, rounded corners] (1,3.2) rectangle ++(6,0.65);\node at (0.7,3.5) {$V_{2}$};
			\vertex (v21) at (2,3.5) {};\node at (1.5,3.5) {$v^{(2)}_{1}$};
			\vertex (v22) at (4,3.5) {};\node at (4.8,3.5) {$v^{(2)}_{2}$};
			\vertex (v23) at (6,3.5) {};\node at (6.6,3.5) {$v^{(2)}_{3}$};
			
			\draw[gray, rounded corners] (1,4.2) rectangle ++(1.5,0.65);\node at (0.7,4.5) {$V_{3}$};
			\vertex (v31) at (2,4.5) {};\node at (1.5,4.5) {$v^{(3)}_{1}$};
			
			\path [line width=1pt]
			(v11) edge (v21)
			(v13) edge (v22)
			(v12) edge (v31)
			;
		\end{scope}
	\end{tikzpicture}
	\caption{The critical (with respect to $\M$) $1$-simplex $\alpha^* =$ $\{v^{(1)}_1v^{(2)}_1$, $v^{(1)}_3v^{(2)}_2\}$ is paired off with the critical $2$-simplex $\alpha^* \sqcup \{v^{(3)}_1v^{(1)}_2\}$ in $\M^*$.}
	\label{extended}
\end{figure}
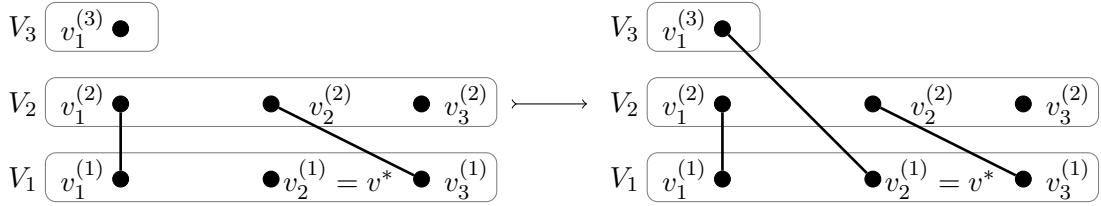

\begin{proposition}
	The discrete vector field $\M^*$ is a gradient vector field on $M_7$.
\end{proposition}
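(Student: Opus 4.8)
The plan is to check two things: that $\M^*$ is a discrete vector field, and that it admits no nontrivial closed $\M^*$-path. For the first point, by the hypothesis on $\alpha^*$ and the observation preceding the statement, both simplices of every new pair $\bigl(\alpha^*,\alpha^*\sqcup\{v^{(3)}_1v^*\}\bigr)$ are critical with respect to $\M$, so no new pair can share a face with a pair of $\M$. Two distinct new pairs are disjoint as well: in $M_7$ one has $V_3=\{v^{(3)}_1\}$, so any $2$-simplex of the form $\alpha^*\sqcup\{v^{(3)}_1v^*\}$ contains exactly one edge incident to $v^{(3)}_1$, namely $v^{(3)}_1v^*$, and deleting that edge recovers $\alpha^*$; hence $\alpha^*\mapsto\alpha^*\sqcup\{v^{(3)}_1v^*\}$ is injective, and the lower simplex of one new pair cannot equal the upper simplex of another for dimension reasons. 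Thus each face of $M_7$ lies in at most one pair of $\M^*$.

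For acyclicity the key is Observation~\ref{obs1}(2): every pair of $\M$ is obtained by adjoining a levelled edge. I would isolate the following consequence. If a step $\alpha_i\rightarrowtail\beta_i$ of a $\V$-path uses a pair of $\M$, write $\beta_i=\alpha_i\sqcup\{e_i\}$, so $e_i$ is a levelled edge; the next simplex $\alpha_{i+1}$ equals $\beta_i$ with an edge \emph{other than} $e_i$ removed, hence $\alpha_{i+1}$ still contains the levelled edge $e_i$. By contrast, if $\alpha_i\rightarrowtail\beta_i$ uses a \emph{new} pair, then $\beta_i=\alpha_i\sqcup\{v^{(3)}_1v'\}$ with $v'$ the vertex of $V_1$ left uncovered by $\alpha_i$, and removing from $\beta_i$ an edge other than $v^{(3)}_1v'$ deletes one of the two cross-edges of $\alpha_i$ between $V_1$ and $V_2$; so $\alpha_{i+1}$ is of the form $\{v^{(3)}_1v',c\}$ with $c$ a cross-edge between $V_1$ and $V_2$, and $\alpha_{i+1}$ therefore contains the edge $v^{(3)}_1v'$ at $v^{(3)}_1$ but no levelled edge.

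Now suppose, toward a contradiction, that $\gamma\colon\alpha_0,\beta_0,\dots,\alpha_r,\beta_r,\alpha_{r+1}=\alpha_0$ is a nontrivial closed $\M^*$-path. Since $\dim M_7=2$, either $\gamma$ runs in dimensions $(0,1)$ or in dimensions $(1,2)$. In the first case all pairs of $\gamma$ belong to $\M$ (the new pairs are in dimensions $(1,2)$), so $\gamma$ is a nontrivial closed $\M$-path, contradicting the fact that $\M$ is a gradient vector field on $M_7$; a $(1,2)$-path using only pairs of $\M$ is impossible for the same reason. So $\gamma$ is a $(1,2)$-path that traverses at least one new pair, and after a cyclic reindexing I may assume $(\alpha_0,\beta_0)$ is that new pair; then $\alpha_0=\alpha^*$ consists of two cross-edges between $V_1$ and $V_2$, so it contains no levelled edge and no edge at $v^{(3)}_1$. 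Examine the last step $\beta_r\to\alpha_{r+1}=\alpha_0$. If $(\alpha_r,\beta_r)\in\M$, the consequence above forces $\alpha_{r+1}$ to contain a levelled edge — impossible. If $(\alpha_r,\beta_r)$ is a new pair, the consequence above forces $\alpha_{r+1}$ to contain an edge at $v^{(3)}_1$ — again impossible. Hence no such $\gamma$ exists, and $\M^*$ is a gradient vector field.

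The steps I expect to need the most care are the disjointness bookkeeping for the family of new pairs, and — above all — keeping the ``a levelled edge survives along the path'' principle quarantined to the $\M$-steps: it is precisely the failure of that principle at a new-pair step (which instead produces an edge at $v^{(3)}_1$) that supplies the second horn of the final dichotomy. One could alternatively run the argument through Theorem~\ref{multi-cancel}, viewing each new pair as the reversal of the trivial (length-zero) $\M$-path from $\alpha^*\subseteq\alpha^*\sqcup\{v^{(3)}_1v^*\}$ to $\alpha^*$; the hypotheses of that theorem hold because a positive-length $\M$-path cannot terminate at a $1$-simplex with no levelled edge, while the $1$-faces of $\alpha^*\sqcup\{v^{(3)}_1v^*\}$ other than $\alpha^*$ carry an edge at $v^{(3)}_1$ and so are never critical $1$-simplices made of two cross-edges between $V_1$ and $V_2$.
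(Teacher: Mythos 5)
Your proof is correct and reaches the same conclusion via a small but genuine variant of the paper's argument. Both proofs turn on the same key observation: every pair $(\alpha,\alpha\sqcup\{e\})\in\M^*$ adds an edge $e$ that is either a levelled edge (for $\M$) or the cross-edge $v^{(3)}_1v^*$ between $V_1$ and $V_3$ (for the new pairs). The paper exploits this by fixing the edge $e'\in\alpha_0$ deleted in the step $\beta_0\to\alpha_1$: since $e'$ is a $V_1$--$V_2$ cross-edge and only levelled or $V_1$--$V_3$ edges are ever added, $e'$ can never reappear, so $e'\notin\alpha_{k+1}$, contradicting $\alpha_{k+1}=\alpha_0\ni e'$. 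You instead examine only the \emph{last} step of the (cyclically reindexed) closed path: whichever kind of pair $(\alpha_r,\beta_r)$ is, the edge it adds survives into $\alpha_{r+1}$, so $\alpha_{r+1}$ contains either a levelled edge or an edge incident to $v^{(3)}_1$, neither of which $\alpha_0=\alpha^*$ (two $V_1$--$V_2$ cross-edges) can possess. Your version derives the contradiction from a single step rather than propagating an invariant along the whole path, which is a touch cleaner. You also explicitly verify that $\M^*$ is a discrete vector field --- that new pairs pair off only $\M$-critical simplices and that distinct new pairs are disjoint, using $V_3=\{v^{(3)}_1\}$ to recover $\alpha^*$ from $\alpha^*\sqcup\{v^{(3)}_1v^*\}$ --- a point the paper leaves implicit; that bookkeeping is correct and worth spelling out.
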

\begin{proof}
	Let, if possible, $\alpha_0^{(p)}, \beta_0^{(p+1)}, \alpha_1^{(p)}, \beta_1^{(p+1)}, \ldots, \alpha_k^{(p)}, \beta_k^{(p+1)}, \alpha_{k+1}^{(p)}=\alpha_0^{(p)}$ be a nontrivial closed $\M^*$-path. If $p=0$, then it leads to a contradiction to the fact that $\M$ is a gradient vector field. So let $p=1$.
	
	If $(\alpha_i,\beta_i)\in \M$ for all $i \in \{0,1,\ldots,k\}$, then again it contradicts the fact that $\M$ is a gradient vector field. So without loss of generality, let $(\alpha_0,\beta_0)\in \M^*\setminus \M$. In this  case, $\alpha_0$ consists of two cross-edges between $V_1$ and $V_2$. Let $\alpha_1=\beta_0 \setminus \{e'\}$, where $e'$ is an edge in $\alpha_0$. We note that if $\beta=\alpha \sqcup \{e\}$ and $(\alpha,\beta) \in \M^*$, then $e$ is either a levelled-edge or a cross-edge between $V_1$ and $V_3$. This implies, for all $i \in [k]$, $e' \notin \beta_i$, and subsequently $e' \notin \alpha_{i+1}$. In particular, $e' \notin \alpha_{k+1}$. This leads to a contradiction, as $\alpha_{k+1}=\alpha_0$.
\end{proof}

\subsection{Computation of (Morse) homology groups of $M_7$}\label{morse-hom}
Throughout this subsection, we always assume the gradient vector field $\M^*$ on the matching complex $M_7$ while discussing the nature of a simplex (i.e., criticality and other related notions). Also, hereafter, while representing a simplex of $M_7$ (i.e., a matching in $K_7$) diagrammatically, we would not explicitly label the vertices of $K_7$ and the sets $V_1$, $V_2$, and $V_3$ for the sake of simplicity, and so it should be understood from the context. For example, we would represent the matching $\{v_1^{(1)}v_1^{(2)}$, $v_2^{(1)}v_1^{(3)}$, $v_3^{(1)}v_2^{(2)}\}$ (see Fig.~\ref{extended}) by the following.
\[\begin{tikzpicture}		
	\smlv (v11) at (0,0) {};
	\smlv (v12) at (0.5,0) {};
	\smlv (v13) at (1,0) {};

	\smlv (v21) at (0,0.5) {};
	\smlv (v22) at (0.5,0.5) {};
	\smlv (v23) at (1,0.5) {};

	\smlv (v31) at (0,1) {};
	
	\path
	(v11) edge (v21)
	(v12) edge (v31)
	(v13) edge (v22)
	;
\end{tikzpicture}\]

\begin{obs}[Characterization of critical simplices]\label{crit-char}
	$\ $
	\begin{enumerate}
		\item  The only critical $0$-simplex is $\xi=\{v^{(1)}_2v^{(1)}_3\}$ (thus $\tilde{C}_0(M_7) \cong \Z$).
		\item Critical $1$-simplices are $\sigma_1 \coloneqq $ $\{v^{(1)}_1v^{(1)}_2$, $v^{(2)}_1v^{(2)}_2\}$, $\sigma_2 \coloneqq $ $\{v^{(1)}_1v^{(1)}_2$, $v^{(2)}_1v^{(2)}_3\}$, $\sigma_3 \coloneqq $ $\{v^{(1)}_1v^{(1)}_3$, $v^{(2)}_1v^{(2)}_2\}$, and $\sigma_4 \coloneqq $ $\{v^{(1)}_1v^{(1)}_3$, $v^{(2)}_1v^{(2)}_3\}$.
		\item Critical $2$-simplices are of the form $\{e_1,e_2,e_3\}$ with one of the following.
		\begin{enumerate}
			\item Each of $e_1$, $e_2$, and $e_3$ is a cross-edge between $V_1$ and $V_2$ (there are 6 such simplices).
			\item Two of $e_1$, $e_2$, and $e_3$ are cross-edges between $V_1$ and $V_2$, and the remaining one is a cross-edge between $V_2$ and $V_3$ (there are 18 such simplices).
		\end{enumerate}
		Therefore, there are 24 critical $2$-simplices.
	\end{enumerate}
\end{obs}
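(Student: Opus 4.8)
The plan is to pin down the critical simplices of $\M$ first, and then track how the extra pairs of $\M^*\setminus\M$ change the picture. For $n=7$ we have $m=2$, so $|V_1|=|V_2|=3$ and $V_3=\{v^{(3)}_1\}$, and $\M=\M_1\sqcup\M_2$. By the definition of criticality, a nonempty simplex $\alpha$ of $M_7$ is critical with respect to $\M$ exactly when $\alpha$ is unpaired in $\M_1\sqcup\M_2$, i.e.\ $\alpha\in\U_2$, with the single exception of $\xi=\{v^{(1)}_2v^{(1)}_3\}$, which lies in the pair $(\emptyset,\xi)\in\M_1''$ and is therefore critical by the $0$-simplex clause of the definition. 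So the heart of the argument is an enumeration of $\U_2$ dimension by dimension, carried out via the explicit membership criteria for $\U_1$ and for $\U_2$ recorded in Section~\ref{sec-cons}.

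I would run this enumeration by recording, for each matching $\alpha$ of $K_7$, how many edges of $\alpha$ are of each type relative to the partition --- $1$-level, $2$-level, or a cross-edge between $V_1$ and $V_2$, between $V_1$ and $V_3$, or between $V_2$ and $V_3$ --- using that $|V_3|=1$ forces at most one edge of $\alpha$ to meet $V_3$. For matchings of each size $1$, $2$, $3$ there are only a few admissible such type-vectors, and for each the $\U_1$- and $\U_2$-conditions are routine to check. For size $1$ this yields that $\U_2$ is empty, whence $\xi$ is the only critical $0$-simplex. For size $2$ it yields that the members of $\U_2$ are precisely the $18$ matchings consisting of two cross-edges between $V_1$ and $V_2$, together with the four matchings $\sigma_1,\dots,\sigma_4$ (each a $1$-level edge through $v^{(1)}_1$ and a $2$-level edge through $v^{(2)}_1$). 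For size $3$ it leaves exactly three families: the $6$ perfect matchings between $V_1$ and $V_2$; the $18$ matchings with two cross-edges between $V_1$ and $V_2$ and one cross-edge between $V_2$ and $V_3$; and the $18$ matchings with two cross-edges between $V_1$ and $V_2$ and one cross-edge between $V_1$ and $V_3$. Thus $\M$ has one critical $0$-simplex, $22$ critical $1$-simplices, and $42$ critical $2$-simplices.

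Finally I would feed in $\M^*$. Its additional pairs are $(\alpha^*,\alpha^*\sqcup\{v^{(3)}_1v^*\})$, one for each critical (with respect to $\M$) $1$-simplex $\alpha^*$ made of two cross-edges between $V_1$ and $V_2$, where $v^*$ is the uncovered vertex of $V_1$. The key bookkeeping observation is that $\alpha^*\mapsto\alpha^*\sqcup\{v^{(3)}_1v^*\}$ is a bijection from the $18$ critical $1$-simplices of the ``two cross-edges between $V_1$ and $V_2$'' type onto the $18$ critical $2$-simplices of the ``two cross-edges between $V_1$ and $V_2$ plus one cross-edge between $V_1$ and $V_3$'' type (the inverse deletes the unique edge meeting $V_3$), and that every simplex appearing in one of these pairs was critical for $\M$. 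Since $\M\subseteq\M^*$, no simplex that was paired in $\M$ becomes critical in $\M^*$, and $\xi$ stays critical; so passing to $\M^*$ removes exactly those $18+18$ simplices from the critical set and changes nothing else, leaving $\xi$ as the only critical $0$-simplex, $\sigma_1,\dots,\sigma_4$ as the critical $1$-simplices, and the remaining $6+18=24$ critical $2$-simplices of the two stated forms. The assertion $\tilde{C}_0(M_7)\cong\mathbb{Z}$ then follows at once.

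The step I expect to be the main obstacle is the middle one: the exhaustive, case-by-case verification that, after imposing both the $\U_1$- and the $\U_2$-membership conditions, exactly the listed edge-type families survive, with the stated counts $6$, $18$, $18$, $4$. Being a finite check on $M_7$, this can alternatively be confirmed by direct computation, which in addition makes all the pairings explicit.
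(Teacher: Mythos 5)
Your proposal is correct and essentially reconstructs the case analysis that the paper leaves implicit in stating this as an Observation: the paper gives no proof, so your enumeration by edge-type vectors (using that $|V_3|=1$ caps the number of edges meeting $V_3$ at one) is exactly the natural unpacking of the membership criteria for $\U_1$ and $\U_2$, and your bookkeeping for the bijection coming from the $\M^*\setminus\M$ pairs and the resulting $42\to 24$, $22\to 4$ reductions is accurate.
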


First, we assign a unique label $\ell(e)$ on the edge $e = v^{(i_1)}_{j_1}v^{(i_2)}_{j_2}$ as follows. 
\[\ell(e) = 
\begin{cases}
	i_1j_1i_2j_2,& \text{if } i_1 < i_2\\
	i_2j_2i_1j_1,& \text{if } i_2 < i_1\\
	i_1j_1i_2j_2,& \text{if } i_1 = i_2, j_1 < j_2\\
	i_2j_2i_1j_1,& \text{if } i_1 = i_2, j_2 < j_1
\end{cases}
\]
Next, we introduce a total order $\le$ on the vertex set of the complex $M_7$ (i.e., on $E(K_7)$) by declaring $e_1 \le e_2$ if and only if $\ell(e_1) \le \ell(e_2)$ in the \emph{lexicographic order}. We assign each simplex the orientation induced by this total order on $V(M_7)$, i.e., if $\alpha=\{e_0,e_1,\ldots,e_k\}$ is a matching in $K_7$ with $e_0<e_1<\cdots<e_k$, then we denote the oriented $k$-simplex $[e_0,e_1,\ldots,e_k]$ also by $\alpha$ whenever needed.

\subsubsection{Kernel and image of $\tilde{\partial}_1$}
If $\sigma$ is a critical $1$-simplex and $\alpha^{(0)} \subsetneq \sigma$, then $\alpha$ is one of $\{v^{(1)}_1v^{(1)}_2\}$, $\{v^{(1)}_1v^{(1)}_3\}$, $\{v^{(2)}_1v^{(2)}_2\}$, and $\{v^{(2)}_1v^{(2)}_3\}$. We note that one of the following two cases holds.
\begin{description}
	\item[Case 1:] $\alpha = \{v^{(1)}_1v^{(1)}_i\}$, where $i \in \{2,3\}$\\
	The only possible $\M^*$-path that starts from $\alpha$, and ends at $\xi=\{v^{(1)}_2v^{(1)}_3\}$ is the following.\\
	\[\begin{tikzcd}
		\alpha \ar[r,tail] & \{v^{(1)}_1v^{(1)}_i, v^{(2)}_2v^{(2)}_3\} \ar[r] & \{v^{(2)}_2v^{(2)}_3\} \ar[r,tail] & \{v^{(1)}_2v^{(1)}_3, v^{(2)}_2v^{(2)}_3\} \ar[r] & \xi
	\end{tikzcd}\]
	\item[Case 2:] $\alpha = \{v^{(2)}_1v^{(2)}_i\}$, where $i \in \{2,3\}$\\
	The only possible $\M^*$-path that starts from $\alpha$, and ends at $\xi=\{v^{(1)}_2v^{(1)}_3\}$ is the following.\\
	\[\begin{tikzcd}
		\alpha \ar[r,tail] & \{v^{(1)}_2v^{(1)}_3, v^{(2)}_1v^{(2)}_i\} \ar[r] & \xi
	\end{tikzcd}\]
\end{description}

Let us consider the critical $1$-simplex $\sigma_1 = \{v^{(1)}_1v^{(1)}_2, v^{(2)}_1v^{(2)}_2\}$. Let $\gamma_1$ be the unique $\M^*$-path that starts from $\{v^{(1)}_1v^{(1)}_2\}$ ($\subsetneq \sigma_1$), and ends at $\xi=\{v^{(1)}_2v^{(1)}_3\}$ (from Case~1 above), i.e.,
\[\begin{tikzcd}
	\gamma_1 : \alpha_0=\{v^{(1)}_1v^{(1)}_2\} \ar[r,tail,"(-1)"] & \beta_0 \ar[r,"(+1)"] & \alpha_1 \ar[r,tail,"(+1)"] & \beta_1 \ar[r,"(-1)"] & \alpha_2=\xi,
\end{tikzcd}\]
where $\beta_0=\{v^{(1)}_1v^{(1)}_2, v^{(2)}_2v^{(2)}_3\}$, $\alpha_1=\{v^{(2)}_2v^{(2)}_3\}$, $\beta_1=\{v^{(1)}_2v^{(1)}_3, v^{(2)}_2v^{(2)}_3\}$. Here we have also included $\langle\beta_i,\alpha_j\rangle$, i.e., the incidence number between the oriented simplices $\beta_i$ and $\alpha_j$ (with $i \le j \le i+1$), above the arrow connecting $\beta_i$ and $\alpha_j$. Considering $\alpha_0$, $\beta_0$, $\alpha_1$, $\beta_1$, and $\alpha_2$ as oriented simplices, the multiplicity of $\gamma_1$ (from Equation~(\ref{mult})), 
\begin{align*}
	m(\gamma_1) &= (-\langle\beta_0,\alpha_0\rangle\langle\beta_0,\alpha_1\rangle)(-\langle\beta_1,\alpha_1\rangle\langle\beta_1,\alpha_2\rangle)\\
	&= (-(-1)(+1))(-(+1)(-1)) = +1.
\end{align*}
Let $\gamma_2$ be the unique $\M^*$-path that starts from $\{v^{(2)}_1v^{(2)}_2\}$ ($\subsetneq \sigma_1$), and ends at $\xi=\{v^{(1)}_2v^{(1)}_3\}$ (from Case~2 above), i.e.,
\[\begin{tikzcd}
	\gamma_2 : \alpha'_0=\{v^{(2)}_1v^{(2)}_2\} \ar[r,tail,"(+1)"] & \beta'_0=\{v^{(1)}_2v^{(1)}_3, v^{(2)}_1v^{(2)}_2\} \ar[r,"(-1)"] & \alpha'_1=\xi.
\end{tikzcd}\]
Considering $\alpha'_0$, $\beta'_0$, and $\alpha'_1$ as oriented simplices, the multiplicity of $\gamma_2$ (from Equation~(\ref{mult})),
\[m(\gamma_2) = -\langle\beta'_0,\alpha'_0\rangle\langle\beta'_0,\alpha'_1\rangle = -(+1)(-1) = +1.\]
Therefore, from Equation~(\ref{boundary}) and Equation~(\ref{bound-coeff}),
\begin{align*}
	\tilde{\partial}_1(\sigma_1) &= \left(\langle\sigma_1,\alpha_0\rangle \cdot m(\gamma_1) + \langle\sigma_1,\alpha'_0\rangle \cdot m(\gamma_2)\right)\cdot\xi\\
	&= ((-1)(+1)+(+1)(+1))\cdot\xi = 0.
\end{align*}

Analogous computations yield $\tilde{\partial}_1(\sigma_2)=0$, $\tilde{\partial}_1(\sigma_3)=0$, and $\tilde{\partial}_1(\sigma_4)=0$. Thus, 
\[\ker(\tilde{\partial}_1)=\Bigl\langle \sigma_1, \sigma_2, \sigma_3, \sigma_4 \Bigr\rangle = \tilde{C}_1(M_7) \text{ and } \operatorname{im}(\tilde{\partial}_1)=0.\]
Consequently, the zeroth (Morse) homology group of $M_7$ is $\Z$.
\subsubsection{Image of $\tilde{\partial}_2$}
To begin with, let us consider a critical $2$-simplex and compute its image under the boundary operator $\tilde{\partial}_2$ in the following example.

\begin{example}
	Let us consider the critical $2$-simplex $\eta_1 \coloneqq $ $\{v_2^{(1)}v_2^{(2)}$, $v_3^{(1)}v_3^{(2)}$, $v_1^{(2)}v_1^{(3)}\}$ as shown below.
	\[\begin{tikzpicture}	
		\node at (-0.7,0.5) {$\eta_1 = $};
		
		\smlv (v11) at (0,0) {};
		\smlv (v12) at (0.5,0) {};
		\smlv (v13) at (1,0) {};

		\smlv (v21) at (0,0.5) {};
		\smlv (v22) at (0.5,0.5) {};
		\smlv (v23) at (1,0.5) {};

		\smlv (v31) at (0,1) {};
		
		\path
		(v12) edge (v22)
		(v13) edge (v23)
		(v21) edge (v31)
		;
	\end{tikzpicture}\]
	Let us determine all possible $\M^*$-paths that start from a $1$-simplex contained in $\eta_1$, and end at a critical $1$-simplex. First, a $1$-simplex contained in $\eta_1$ is one of the following three.
	\[\begin{tikzpicture}
		\begin{scope}
			\node at (-0.7,0.5) {$\psi_{11} = $};
			
			\smlv (v11) at (0,0) {};
			\smlv (v12) at (0.5,0) {};
			\smlv (v13) at (1,0) {};

			\smlv (v21) at (0,0.5) {};
			\smlv (v22) at (0.5,0.5) {};
			\smlv (v23) at (1,0.5) {};

			\smlv (v31) at (0,1) {};
			
			\path
			(v13) edge (v23)
			(v21) edge (v31)
			;
		\end{scope}
		
		\begin{scope}[xshift=3cm]
			\node at (-0.7,0.5) {$\psi_{12} = $};
			
			\smlv (v11) at (0,0) {};
			\smlv (v12) at (0.5,0) {};
			\smlv (v13) at (1,0) {};

			\smlv (v21) at (0,0.5) {};
			\smlv (v22) at (0.5,0.5) {};
			\smlv (v23) at (1,0.5) {};

			\smlv (v31) at (0,1) {};
			
			\path
			(v12) edge (v22)
			(v21) edge (v31)
			;
		\end{scope}
		
		\begin{scope}[xshift=6cm]
			\node at (-0.7,0.5) {$\psi_{13} = $};
			
			\smlv (v11) at (0,0) {};
			\smlv (v12) at (0.5,0) {};
			\smlv (v13) at (1,0) {};

			\smlv (v21) at (0,0.5) {};
			\smlv (v22) at (0.5,0.5) {};
			\smlv (v23) at (1,0.5) {};

			\smlv (v31) at (0,1) {};
			
			\path
			(v12) edge (v22)
			(v13) edge (v23)
			;
		\end{scope}
	\end{tikzpicture}\]
	
	In Fig.~\ref{psi1}, we describe all possible $\M^*$-paths that start from $\psi_{11}$, and end either at a critical $1$-simplex or at a $1$-simplex that is paired off with a $0$-simplex in $\M^*$.
	
	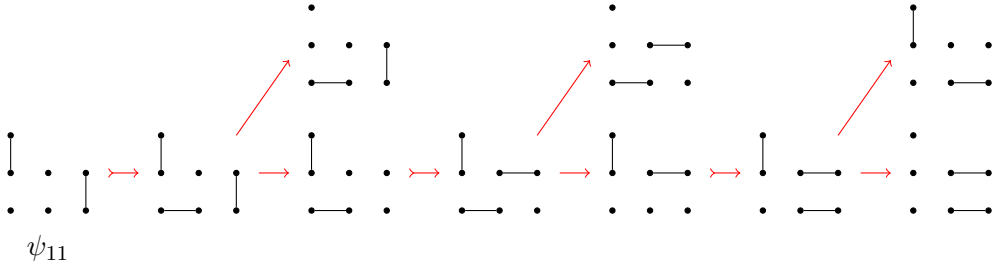
\begin{figure}[!ht]
		\centering
		\begin{tikzpicture}
			\begin{scope}
				\node at (0.5,-0.5) {$\psi_{11}$};
				
				\smlv (v11) at (0,0) {};
				\smlv (v12) at (0.5,0) {};
				\smlv (v13) at (1,0) {};

				\smlv (v21) at (0,0.5) {};
				\smlv (v22) at (0.5,0.5) {};
				\smlv (v23) at (1,0.5) {};

				\smlv (v31) at (0,1) {};
				
				\path
				(v13) edge (v23)
				(v21) edge (v31)
				;
			\end{scope}
			
			\draw [to reversed-to,red](1.3,0.5) -- (1.7,0.5);
			\begin{scope}[xshift=2cm]			
				\smlv (v11) at (0,0) {};
				\smlv (v12) at (0.5,0) {};
				\smlv (v13) at (1,0) {};

				\smlv (v21) at (0,0.5) {};
				\smlv (v22) at (0.5,0.5) {};
				\smlv (v23) at (1,0.5) {};

				\smlv (v31) at (0,1) {};
				
				\path
				(v11) edge (v12)
				(v13) edge (v23)
				(v21) edge (v31)
				;
			\end{scope}
			
			\draw [-to,red](3.3,0.5) -- (3.7,0.5);
			\begin{scope}[xshift=4cm]			
				\smlv (v11) at (0,0) {};
				\smlv (v12) at (0.5,0) {};
				\smlv (v13) at (1,0) {};

				\smlv (v21) at (0,0.5) {};
				\smlv (v22) at (0.5,0.5) {};
				\smlv (v23) at (1,0.5) {};

				\smlv (v31) at (0,1) {};
				
				\path
				(v11) edge (v12)
				(v21) edge (v31)
				;
			\end{scope}
			
			\draw [-to,red](3,1) -- (3.7,2);
			\begin{scope}[xshift=4cm,yshift=1.7cm]			
				\smlv (v11) at (0,0) {};
				\smlv (v12) at (0.5,0) {};
				\smlv (v13) at (1,0) {};

				\smlv (v21) at (0,0.5) {};
				\smlv (v22) at (0.5,0.5) {};
				\smlv (v23) at (1,0.5) {};

				\smlv (v31) at (0,1) {};
				
				\path
				(v11) edge (v12)
				(v13) edge (v23)
				;
			\end{scope}
			
			\draw [to reversed-to,red](5.3,0.5) -- (5.7,0.5);
			\begin{scope}[xshift=6cm]			
				\smlv (v11) at (0,0) {};
				\smlv (v12) at (0.5,0) {};
				\smlv (v13) at (1,0) {};

				\smlv (v21) at (0,0.5) {};
				\smlv (v22) at (0.5,0.5) {};
				\smlv (v23) at (1,0.5) {};

				\smlv (v31) at (0,1) {};
				
				\path
				(v11) edge (v12)
				(v21) edge (v31)
				(v22) edge (v23)
				;
			\end{scope}
			
			\draw [-to,red](7.3,0.5) -- (7.7,0.5);
			\begin{scope}[xshift=8cm]			
				\smlv (v11) at (0,0) {};
				\smlv (v12) at (0.5,0) {};
				\smlv (v13) at (1,0) {};

				\smlv (v21) at (0,0.5) {};
				\smlv (v22) at (0.5,0.5) {};
				\smlv (v23) at (1,0.5) {};

				\smlv (v31) at (0,1) {};
				
				\path
				(v21) edge (v31)
				(v22) edge (v23)
				;
			\end{scope}
			
			\draw [-to,red](7,1) -- (7.7,2);
			\begin{scope}[xshift=8cm,yshift=1.7cm]			
				\smlv (v11) at (0,0) {};
				\smlv (v12) at (0.5,0) {};
				\smlv (v13) at (1,0) {};

				\smlv (v21) at (0,0.5) {};
				\smlv (v22) at (0.5,0.5) {};
				\smlv (v23) at (1,0.5) {};

				\smlv (v31) at (0,1) {};
				
				\path
				(v11) edge (v12)
				(v22) edge (v23)
				;
			\end{scope}
			
			\draw [to reversed-to,red](9.3,0.5) -- (9.7,0.5);
			\begin{scope}[xshift=10cm]			
				\smlv (v11) at (0,0) {};
				\smlv (v12) at (0.5,0) {};
				\smlv (v13) at (1,0) {};

				\smlv (v21) at (0,0.5) {};
				\smlv (v22) at (0.5,0.5) {};
				\smlv (v23) at (1,0.5) {};

				\smlv (v31) at (0,1) {};
				
				\path
				(v12) edge (v13)
				(v21) edge (v31)
				(v22) edge (v23)
				;
			\end{scope}
			
			\draw [-to,red](11.3,0.5) -- (11.7,0.5);
			\begin{scope}[xshift=12cm]			
				\smlv (v11) at (0,0) {};
				\smlv (v12) at (0.5,0) {};
				\smlv (v13) at (1,0) {};

				\smlv (v21) at (0,0.5) {};
				\smlv (v22) at (0.5,0.5) {};
				\smlv (v23) at (1,0.5) {};

				\smlv (v31) at (0,1) {};
				
				\path
				(v12) edge (v13)
				(v22) edge (v23)
				;
			\end{scope}
			
			\draw [-to,red](11,1) -- (11.7,2);
			\begin{scope}[xshift=12cm,yshift=1.7cm]			
				\smlv (v11) at (0,0) {};
				\smlv (v12) at (0.5,0) {};
				\smlv (v13) at (1,0) {};

				\smlv (v21) at (0,0.5) {};
				\smlv (v22) at (0.5,0.5) {};
				\smlv (v23) at (1,0.5) {};

				\smlv (v31) at (0,1) {};
				
				\path
				(v12) edge (v13)
				(v21) edge (v31)
				;
			\end{scope}
		\end{tikzpicture}
		\caption{All possible (maximal) $\M^*$-paths that start from $\psi_{11}$. Note that each of them ends at a $1$-simplex that is paired off with a $0$-simplex in $\M^*$.}\label{psi1}
	\end{figure}
	
	In Fig.~\ref{psi2}, we describe all possible $\M^*$-paths that start from $\psi_{12}$, and end either at a critical $1$-simplex or at a $1$-simplex that is paired off with a $0$-simplex in $\M^*$.
	
	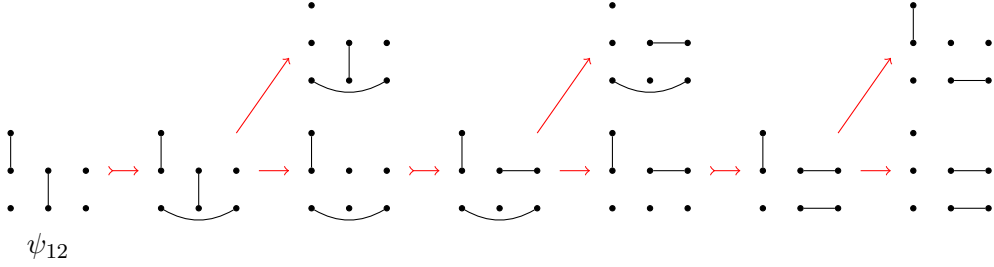
\begin{figure}[!ht]
		\centering
		\begin{tikzpicture}
			\begin{scope}
				\node at (0.5,-0.5) {$\psi_{12}$};
				
				\smlv (v11) at (0,0) {};
				\smlv (v12) at (0.5,0) {};
				\smlv (v13) at (1,0) {};

				\smlv (v21) at (0,0.5) {};
				\smlv (v22) at (0.5,0.5) {};
				\smlv (v23) at (1,0.5) {};

				\smlv (v31) at (0,1) {};
				
				\path
				(v12) edge (v22)
				(v21) edge (v31)
				;
			\end{scope}
			
			\draw [to reversed-to,red](1.3,0.5) -- (1.7,0.5);
			\begin{scope}[xshift=2cm]			
				\smlv (v11) at (0,0) {};
				\smlv (v12) at (0.5,0) {};
				\smlv (v13) at (1,0) {};

				\smlv (v21) at (0,0.5) {};
				\smlv (v22) at (0.5,0.5) {};
				\smlv (v23) at (1,0.5) {};

				\smlv (v31) at (0,1) {};
				
				\path
				(v12) edge (v22)
				(v21) edge (v31)
				;
				\path[bend right]
				(v11) edge (v13)
				;
			\end{scope}
			
			\draw [-to,red](3.3,0.5) -- (3.7,0.5);
			\begin{scope}[xshift=4cm]			
				\smlv (v11) at (0,0) {};
				\smlv (v12) at (0.5,0) {};
				\smlv (v13) at (1,0) {};

				\smlv (v21) at (0,0.5) {};
				\smlv (v22) at (0.5,0.5) {};
				\smlv (v23) at (1,0.5) {};

				\smlv (v31) at (0,1) {};
				
				\path
				(v21) edge (v31)
				;
				\path[bend right]
				(v11) edge (v13)
				;
			\end{scope}
			
			\draw [-to,red](3,1) -- (3.7,2);
			\begin{scope}[xshift=4cm,yshift=1.7cm]			
				\smlv (v11) at (0,0) {};
				\smlv (v12) at (0.5,0) {};
				\smlv (v13) at (1,0) {};

				\smlv (v21) at (0,0.5) {};
				\smlv (v22) at (0.5,0.5) {};
				\smlv (v23) at (1,0.5) {};

				\smlv (v31) at (0,1) {};
				
				\path
				(v12) edge (v22)
				;
				\path[bend right]
				(v11) edge (v13)
				;
			\end{scope}
			
			\draw [to reversed-to,red](5.3,0.5) -- (5.7,0.5);
			\begin{scope}[xshift=6cm]			
				\smlv (v11) at (0,0) {};
				\smlv (v12) at (0.5,0) {};
				\smlv (v13) at (1,0) {};

				\smlv (v21) at (0,0.5) {};
				\smlv (v22) at (0.5,0.5) {};
				\smlv (v23) at (1,0.5) {};

				\smlv (v31) at (0,1) {};
				
				\path
				(v21) edge (v31)
				(v22) edge (v23)
				;
				\path[bend right]
				(v11) edge (v13)
				;
			\end{scope}
			
			\draw [-to,red](7.3,0.5) -- (7.7,0.5);
			\begin{scope}[xshift=8cm]			
				\smlv (v11) at (0,0) {};
				\smlv (v12) at (0.5,0) {};
				\smlv (v13) at (1,0) {};

				\smlv (v21) at (0,0.5) {};
				\smlv (v22) at (0.5,0.5) {};
				\smlv (v23) at (1,0.5) {};

				\smlv (v31) at (0,1) {};
				
				\path
				(v21) edge (v31)
				(v22) edge (v23)
				;
			\end{scope}
			
			\draw [-to,red](7,1) -- (7.7,2);
			\begin{scope}[xshift=8cm,yshift=1.7cm]			
				\smlv (v11) at (0,0) {};
				\smlv (v12) at (0.5,0) {};
				\smlv (v13) at (1,0) {};

				\smlv (v21) at (0,0.5) {};
				\smlv (v22) at (0.5,0.5) {};
				\smlv (v23) at (1,0.5) {};

				\smlv (v31) at (0,1) {};
				
				\path
				(v22) edge (v23)
				;
				\path[bend right]
				(v11) edge (v13)
				;
			\end{scope}
			
			\draw [to reversed-to,red](9.3,0.5) -- (9.7,0.5);
			\begin{scope}[xshift=10cm]			
				\smlv (v11) at (0,0) {};
				\smlv (v12) at (0.5,0) {};
				\smlv (v13) at (1,0) {};

				\smlv (v21) at (0,0.5) {};
				\smlv (v22) at (0.5,0.5) {};
				\smlv (v23) at (1,0.5) {};

				\smlv (v31) at (0,1) {};
				
				\path
				(v12) edge (v13)
				(v21) edge (v31)
				(v22) edge (v23)
				;
			\end{scope}
			
			\draw [-to,red](11.3,0.5) -- (11.7,0.5);
			\begin{scope}[xshift=12cm]			
				\smlv (v11) at (0,0) {};
				\smlv (v12) at (0.5,0) {};
				\smlv (v13) at (1,0) {};

				\smlv (v21) at (0,0.5) {};
				\smlv (v22) at (0.5,0.5) {};
				\smlv (v23) at (1,0.5) {};

				\smlv (v31) at (0,1) {};
				
				\path
				(v12) edge (v13)
				(v22) edge (v23)
				;
			\end{scope}
			
			\draw [-to,red](11,1) -- (11.7,2);
			\begin{scope}[xshift=12cm,yshift=1.7cm]			
				\smlv (v11) at (0,0) {};
				\smlv (v12) at (0.5,0) {};
				\smlv (v13) at (1,0) {};

				\smlv (v21) at (0,0.5) {};
				\smlv (v22) at (0.5,0.5) {};
				\smlv (v23) at (1,0.5) {};

				\smlv (v31) at (0,1) {};
				
				\path
				(v12) edge (v13)
				(v21) edge (v31)
				;
			\end{scope}
		\end{tikzpicture}
		\caption{All possible (maximal) $\M^*$-paths that start from $\psi_{12}$. Note that each of them ends at a $1$-simplex that is paired off with a $0$-simplex in $\M^*$.}\label{psi2}
	\end{figure}
	
	In Fig.~\ref{psi3}, we describe all possible $\M^*$-paths that start from $\psi_{13}$, and end either at a critical $1$-simplex or at a $1$-simplex that is paired off with a $0$-simplex in $\M^*$.
	\begin{figure}[!ht]
		\centering
		\begin{tikzpicture}
			\begin{scope}
				\node at (0.5,-0.5) {$\psi_{13}$};
				
				\smlv (v11) at (0,0) {};
				\smlv (v12) at (0.5,0) {};
				\smlv (v13) at (1,0) {};

				\smlv (v21) at (0,0.5) {};
				\smlv (v22) at (0.5,0.5) {};
				\smlv (v23) at (1,0.5) {};

				\smlv (v31) at (0,1) {};
				
				\path
				(v12) edge (v22)
				(v13) edge (v23)
				;
			\end{scope}
			
			\draw [to reversed-to,red](1.3,0.5) -- (1.7,0.5);
			\begin{scope}[xshift=2cm]
				\smlv (v11) at (0,0) {};
				\smlv (v12) at (0.5,0) {};
				\smlv (v13) at (1,0) {};

				\smlv (v21) at (0,0.5) {};
				\smlv (v22) at (0.5,0.5) {};
				\smlv (v23) at (1,0.5) {};

				\smlv (v31) at (0,1) {};
				
				\path
				(v12) edge (v22)
				(v13) edge (v23)
				;
				\path[bend left]
				(v11) edge (v31)
				;
			\end{scope}
			
			\draw [-to, red](3.3,0.7) -- (3.7,1);
			\begin{scope}[xshift=4cm, yshift=1cm]
				\smlv (v11) at (0,0) {};
				\smlv (v12) at (0.5,0) {};
				\smlv (v13) at (1,0) {};

				\smlv (v21) at (0,0.5) {};
				\smlv (v22) at (0.5,0.5) {};
				\smlv (v23) at (1,0.5) {};

				\smlv (v31) at (0,1) {};
				
				\path
				(v12) edge (v22)
				;
				\path[bend left]
				(v11) edge (v31)
				;
			\end{scope}
			
			\draw [-to, red](3.3,0.4) -- (3.7,0);
			\begin{scope}[xshift=4cm, yshift=-1cm]
				\smlv (v11) at (0,0) {};
				\smlv (v12) at (0.5,0) {};
				\smlv (v13) at (1,0) {};

				\smlv (v21) at (0,0.5) {};
				\smlv (v22) at (0.5,0.5) {};
				\smlv (v23) at (1,0.5) {};

				\smlv (v31) at (0,1) {};
				
				\path
				(v13) edge (v23)
				;
				\path[bend left]
				(v11) edge (v31)
				;
			\end{scope}
			
			\draw [to reversed-to,red](5.3,1.8) -- (5.7,2);
			\begin{scope}[xshift=6cm, yshift=2cm]
				\smlv (v11) at (0,0) {};
				\smlv (v12) at (0.5,0) {};
				\smlv (v13) at (1,0) {};

				\smlv (v21) at (0,0.5) {};
				\smlv (v22) at (0.5,0.5) {};
				\smlv (v23) at (1,0.5) {};

				\smlv (v31) at (0,1) {};
				
				\path
				(v12) edge (v22)
				;
				\path[bend left]
				(v11) edge (v31)
				(v21) edge (v23)
				;
			\end{scope}
			
			\draw [to reversed-to,red](5.3,-0.8) -- (5.7,-1);
			\begin{scope}[xshift=6cm, yshift=-2cm]
				\smlv (v11) at (0,0) {};
				\smlv (v12) at (0.5,0) {};
				\smlv (v13) at (1,0) {};

				\smlv (v21) at (0,0.5) {};
				\smlv (v22) at (0.5,0.5) {};
				\smlv (v23) at (1,0.5) {};

				\smlv (v31) at (0,1) {};
				
				\path
				(v13) edge (v23)
				(v21) edge (v22)
				;
				\path[bend left]
				(v11) edge (v31)
				;
			\end{scope}
			
			\draw [-to,red](7.3,2.8) -- (7.7,3);
			\begin{scope}[xshift=8cm, yshift=3cm]
				\smlv (v11) at (0,0) {};
				\smlv (v12) at (0.5,0) {};
				\smlv (v13) at (1,0) {};

				\smlv (v21) at (0,0.5) {};
				\smlv (v22) at (0.5,0.5) {};
				\smlv (v23) at (1,0.5) {};

				\smlv (v31) at (0,1) {};
				
				\path[bend left]
				(v11) edge (v31)
				(v21) edge (v23)
				;
			\end{scope}
			
			\draw [-to,red](7.3,2.2) -- (7.7,2);
			\begin{scope}[xshift=8cm, yshift=1cm]
				\smlv (v11) at (0,0) {};
				\smlv (v12) at (0.5,0) {};
				\smlv (v13) at (1,0) {};

				\smlv (v21) at (0,0.5) {};
				\smlv (v22) at (0.5,0.5) {};
				\smlv (v23) at (1,0.5) {};

				\smlv (v31) at (0,1) {};
				
				\path
				(v12) edge (v22)
				;
				\path[bend left]
				(v21) edge (v23)
				;
			\end{scope}
			
			\draw [-to,red](7.3,-1.2) -- (7.7,-1);
			\begin{scope}[xshift=8cm, yshift=-1cm]
				\smlv (v11) at (0,0) {};
				\smlv (v12) at (0.5,0) {};
				\smlv (v13) at (1,0) {};

				\smlv (v21) at (0,0.5) {};
				\smlv (v22) at (0.5,0.5) {};
				\smlv (v23) at (1,0.5) {};

				\smlv (v31) at (0,1) {};
				
				\path
				(v13) edge (v23)
				(v21) edge (v22)
				;
			\end{scope}
			
			\draw [-to,red](7.3,-1.8) -- (7.7,-2);
			\begin{scope}[xshift=8cm, yshift=-3cm]
				\smlv (v11) at (0,0) {};
				\smlv (v12) at (0.5,0) {};
				\smlv (v13) at (1,0) {};

				\smlv (v21) at (0,0.5) {};
				\smlv (v22) at (0.5,0.5) {};
				\smlv (v23) at (1,0.5) {};

				\smlv (v31) at (0,1) {};
				
				\path
				(v21) edge (v22)
				;
				\path[bend left]
				(v11) edge (v31)
				;
			\end{scope}
			
			\draw [to reversed-to,red](9,4) -- (9.7,4.7);
			\begin{scope}[xshift=10cm, yshift=5cm]
				\smlv (v11) at (0,0) {};
				\smlv (v12) at (0.5,0) {};
				\smlv (v13) at (1,0) {};

				\smlv (v21) at (0,0.5) {};
				\smlv (v22) at (0.5,0.5) {};
				\smlv (v23) at (1,0.5) {};

				\smlv (v31) at (0,1) {};
				
				\path
				(v12) edge (v13)
				;
				\path[bend left]
				(v11) edge (v31)
				(v21) edge (v23)
				;
			\end{scope}
			
			\draw [to reversed-to,red](9.3,1.7) -- (9.7,2);
			\begin{scope}[xshift=10cm, yshift=2cm]
				\smlv (v11) at (0,0) {};
				\smlv (v12) at (0.5,0) {};
				\smlv (v13) at (1,0) {};

				\smlv (v21) at (0,0.5) {};
				\smlv (v22) at (0.5,0.5) {};
				\smlv (v23) at (1,0.5) {};
				
				\smlv (v31) at (0,1) {};
				
				\path
				(v12) edge (v22)
				;
				\path[bend left]
				(v13) edge (v11)
				(v21) edge (v23)
				;
			\end{scope}
			
			\draw [to reversed-to,red](9.3,-0.7) -- (9.7,-1);
			\begin{scope}[xshift=10cm, yshift=-2cm]
				\smlv (v11) at (0,0) {};
				\smlv (v12) at (0.5,0) {};
				\smlv (v13) at (1,0) {};

				\smlv (v21) at (0,0.5) {};
				\smlv (v22) at (0.5,0.5) {};
				\smlv (v23) at (1,0.5) {};

				\smlv (v31) at (0,1) {};
				
				\path
				(v11) edge (v12)
				(v13) edge (v23)
				(v21) edge (v22)
				;
			\end{scope}

			\draw [to reversed-to,red](9.3,-3) -- (9.7,-3.7);
			\begin{scope}[xshift=10cm, yshift=-5cm]
				\smlv (v11) at (0,0) {};
				\smlv (v12) at (0.5,0) {};
				\smlv (v13) at (1,0) {};

				\smlv (v21) at (0,0.5) {};
				\smlv (v22) at (0.5,0.5) {};
				\smlv (v23) at (1,0.5) {};

				\smlv (v31) at (0,1) {};
				
				\path
				(v12) edge (v13)
				(v21) edge (v22)
				;
				\path[bend left]
				(v11) edge (v31)
				;
			\end{scope}
			
			\draw [-to,red](11.3,6) -- (11.7,6.7);
			\begin{scope}[xshift=12cm, yshift=7cm]
				\smlv (v11) at (0,0) {};
				\smlv (v12) at (0.5,0) {};
				\smlv (v13) at (1,0) {};

				\smlv (v21) at (0,0.5) {};
				\smlv (v22) at (0.5,0.5) {};
				\smlv (v23) at (1,0.5) {};

				\smlv (v31) at (0,1) {};
				
				\path
				(v12) edge (v13)
				;
				\path[bend left]
				(v21) edge (v23)
				;
			\end{scope}
			
			\draw [-to,red](11.3,5.5) -- (11.7,5.5);
			\begin{scope}[xshift=12cm, yshift=5cm]
				\smlv (v11) at (0,0) {};
				\smlv (v12) at (0.5,0) {};
				\smlv (v13) at (1,0) {};

				\smlv (v21) at (0,0.5) {};
				\smlv (v22) at (0.5,0.5) {};
				\smlv (v23) at (1,0.5) {};

				\smlv (v31) at (0,1) {};
				
				\path
				(v12) edge (v13)
				;
				\path[bend left]
				(v11) edge (v31)
				;
			\end{scope}
			
			\draw [-to,red](11.3,2.8) -- (11.7,3);
			\begin{scope}[xshift=12cm, yshift=3cm]
				\smlv (v11) at (0,0) {};
				\smlv (v12) at (0.5,0) {};
				\smlv (v13) at (1,0) {};

				\smlv (v21) at (0,0.5) {};
				\smlv (v22) at (0.5,0.5) {};
				\smlv (v23) at (1,0.5) {};

				\smlv (v31) at (0,1) {};
				
				\path
				(v12) edge (v22)
				;
				\path[bend left]
				(v13) edge (v11)
				;
			\end{scope}
			
			\draw [-to,red](11.3,2.2) -- (11.7,2);
			\begin{scope}[xshift=12cm, yshift=1cm]
				\node at (1.7,0.5) {$=\sigma_4$};
				\node[red] at (0.5,-0.5) {(critical)};
				
				\smlv (v11) at (0,0) {};
				\smlv (v12) at (0.5,0) {};
				\smlv (v13) at (1,0) {};

				\smlv (v21) at (0,0.5) {};
				\smlv (v22) at (0.5,0.5) {};
				\smlv (v23) at (1,0.5) {};

				\smlv (v31) at (0,1) {};
				
				\path[bend left]
				(v13) edge (v11)
				(v21) edge (v23)
				;
			\end{scope}
			
			\draw [-to,red](11.3,-1.2) -- (11.7,-1);
			\begin{scope}[xshift=12cm, yshift=-1cm]
				\node at (1.7,0.5) {$=\sigma_1$};
				\node[red] at (0.5,-0.5) {(critical)};
				
				\smlv (v11) at (0,0) {};
				\smlv (v12) at (0.5,0) {};
				\smlv (v13) at (1,0) {};

				\smlv (v21) at (0,0.5) {};
				\smlv (v22) at (0.5,0.5) {};
				\smlv (v23) at (1,0.5) {};

				\smlv (v31) at (0,1) {};
				
				\path
				(v11) edge (v12)
				(v21) edge (v22)
				;
			\end{scope}
			
			\draw [-to,red](11.3,-1.8) -- (11.7,-2);
			\begin{scope}[xshift=12cm, yshift=-3cm]
				\smlv (v11) at (0,0) {};
				\smlv (v12) at (0.5,0) {};
				\smlv (v13) at (1,0) {};

				\smlv (v21) at (0,0.5) {};
				\smlv (v22) at (0.5,0.5) {};
				\smlv (v23) at (1,0.5) {};

				\smlv (v31) at (0,1) {};
				
				\path
				(v11) edge (v12)
				(v13) edge (v23)
				;
			\end{scope}
			
			\draw [-to,red](11.3,-4.5) -- (11.7,-4.5);
			\begin{scope}[xshift=12cm, yshift=-5cm]
				\smlv (v11) at (0,0) {};
				\smlv (v12) at (0.5,0) {};
				\smlv (v13) at (1,0) {};

				\smlv (v21) at (0,0.5) {};
				\smlv (v22) at (0.5,0.5) {};
				\smlv (v23) at (1,0.5) {};

				\smlv (v31) at (0,1) {};
				
				\path
				(v12) edge (v13)
				;
				\path[bend left]
				(v11) edge (v31)
				;
			\end{scope}
			
			\draw [-to,red](11.3,-5.5) -- (11.7,-6);
			\begin{scope}[xshift=12cm, yshift=-7cm]
				\smlv (v11) at (0,0) {};
				\smlv (v12) at (0.5,0) {};
				\smlv (v13) at (1,0) {};

				\smlv (v21) at (0,0.5) {};
				\smlv (v22) at (0.5,0.5) {};
				\smlv (v23) at (1,0.5) {};

				\smlv (v31) at (0,1) {};
				
				\path
				(v12) edge (v13)
				(v21) edge (v22)
				;
			\end{scope}
		\end{tikzpicture}
		\caption{All possible (maximal) $\M^*$-paths that start from $\psi_{13}$. Note that only two of them end at a critical $1$-simplex.}\label{psi3}
	\end{figure}
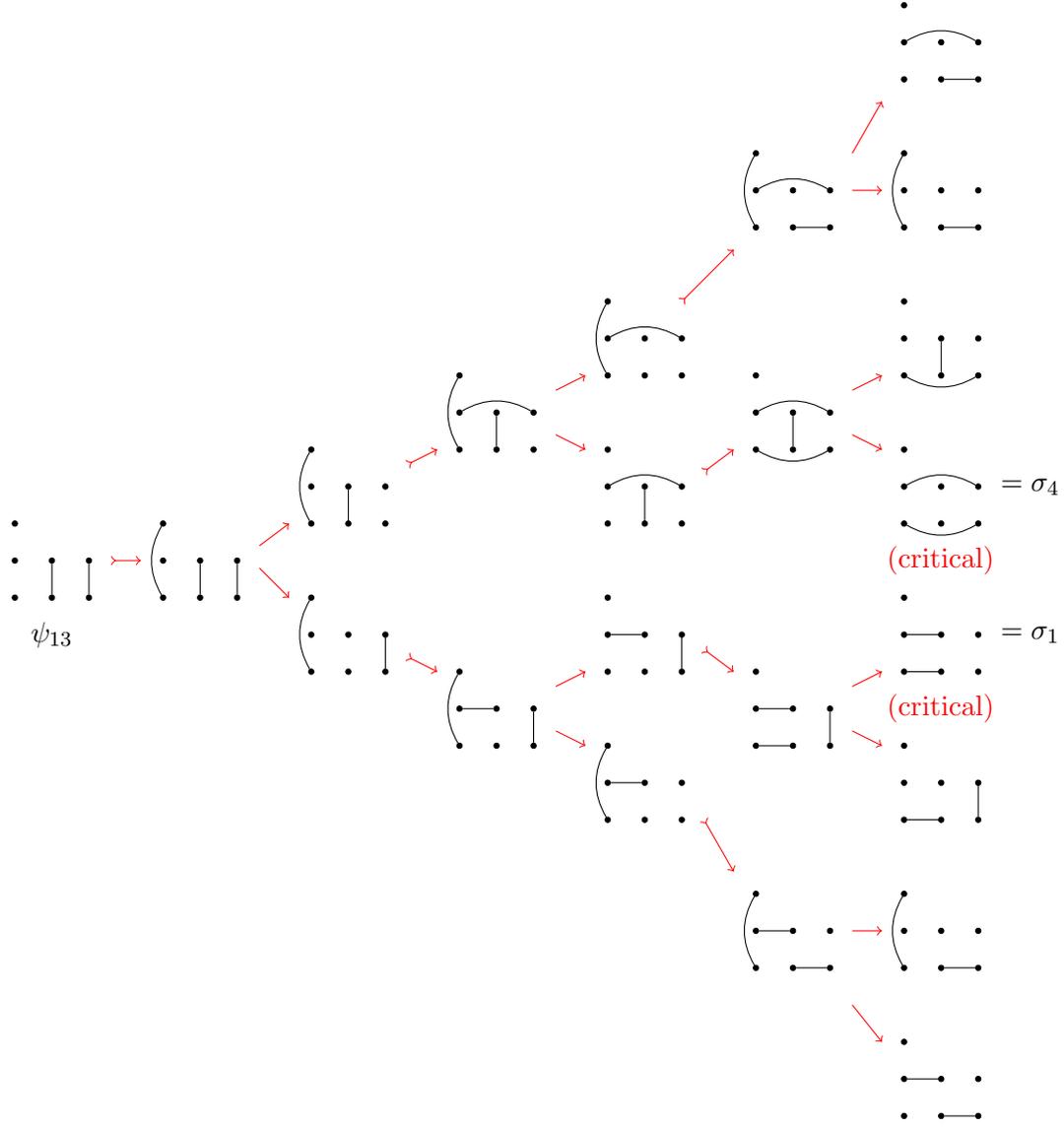
	
	We observe that there are exactly two $\M^*$-paths that start from a $1$-simplex contained in $\eta_1$, and end at a critical $1$-simplex as shown in Fig.~\ref{eta1} (see also Fig.~\ref{tau12} and Fig.~\ref{tau42} in Appendix~\ref{append1}).
	\begin{figure}[!ht]
		\centering
		\begin{tikzpicture}
			\begin{scope}[scale=0.96, transform shape]
			\begin{scope}[xshift=-2cm]
				\node at (0.5,-0.5) {$\eta_1$};
				
				\smlv (v11) at (0,0) {};
				\smlv (v12) at (0.5,0) {};
				\smlv (v13) at (1,0) {};

				\smlv (v21) at (0,0.5) {};
				\smlv (v22) at (0.5,0.5) {};
				\smlv (v23) at (1,0.5) {};

				\smlv (v31) at (0,1) {};
				
				\path
				(v12) edge (v22)
				(v13) edge (v23)
				(v21) edge (v31)
				;
			\end{scope}
			
			\draw [-to,red](-0.7,1) -- (-0.3,1.5);
			\begin{scope}[yshift=1.2cm]
				\begin{scope}
					\node at (0.5,-0.5) {$\psi_{13}$};
					
					\smlv (v11) at (0,0) {};
					\smlv (v12) at (0.5,0) {};
					\smlv (v13) at (1,0) {};

					\smlv (v21) at (0,0.5) {};
					\smlv (v22) at (0.5,0.5) {};
					\smlv (v23) at (1,0.5) {};

					\smlv (v31) at (0,1) {};
					
					\path
					(v12) edge (v22)
					(v13) edge (v23)
					;
				\end{scope}
				
				\draw [to reversed-to,red](1.3,0.5) -- (1.7,0.5);
				\begin{scope}[xshift=2cm]	
					\smlv (v11) at (0,0) {};
					\smlv (v12) at (0.5,0) {};
					\smlv (v13) at (1,0) {};

					\smlv (v21) at (0,0.5) {};
					\smlv (v22) at (0.5,0.5) {};
					\smlv (v23) at (1,0.5) {};

					\smlv (v31) at (0,1) {};
					
					\path
					(v12) edge (v22)
					(v13) edge (v23)
					;
					\path[bend left]
					(v11) edge (v31)
					;
				\end{scope}
				
				\draw [-to,red](3.3,0.5) -- (3.7,0.5);
				\begin{scope}[xshift=4cm]		
					\smlv (v11) at (0,0) {};
					\smlv (v12) at (0.5,0) {};
					\smlv (v13) at (1,0) {};

					\smlv (v21) at (0,0.5) {};
					\smlv (v22) at (0.5,0.5) {};
					\smlv (v23) at (1,0.5) {};

					\smlv (v31) at (0,1) {};
					
					\path
					(v12) edge (v22)
					;
					\path[bend left]
					(v11) edge (v31)
					;
				\end{scope}
				
				\draw [to reversed-to,red](5.3,0.5) -- (5.7,0.5);
				\begin{scope}[xshift=6cm]		
					\smlv (v11) at (0,0) {};
					\smlv (v12) at (0.5,0) {};
					\smlv (v13) at (1,0) {};

					\smlv (v21) at (0,0.5) {};
					\smlv (v22) at (0.5,0.5) {};
					\smlv (v23) at (1,0.5) {};

					\smlv (v31) at (0,1) {};
					
					\path
					(v12) edge (v22)
					;
					\path[bend left]
					(v11) edge (v31)
					(v21) edge (v23)
					;
				\end{scope}
				
				\draw [-to,red](7.3,0.5) -- (7.7,0.5);
				\begin{scope}[xshift=8cm]	
					\smlv (v11) at (0,0) {};
					\smlv (v12) at (0.5,0) {};
					\smlv (v13) at (1,0) {};

					\smlv (v21) at (0,0.5) {};
					\smlv (v22) at (0.5,0.5) {};
					\smlv (v23) at (1,0.5) {};

					\smlv (v31) at (0,1) {};
					
					\path
					(v12) edge (v22)
					;
					\path[bend left]
					(v21) edge (v23)
					;
				\end{scope}
				
				\draw [to reversed-to,red](9.3,0.5) -- (9.7,0.5);
				\begin{scope}[xshift=10cm]			
					\smlv (v11) at (0,0) {};
					\smlv (v12) at (0.5,0) {};
					\smlv (v13) at (1,0) {};

					\smlv (v21) at (0,0.5) {};
					\smlv (v22) at (0.5,0.5) {};
					\smlv (v23) at (1,0.5) {};

					\smlv (v31) at (0,1) {};
					
					\path
					(v12) edge (v22)
					;
					\path[bend left]
					(v13) edge (v11)
					(v21) edge (v23)
					;
				\end{scope}
				
				\draw [-to,red](11.3,0.5) -- (11.7,0.5);
				\begin{scope}[xshift=12cm]
					\node at (0.5,-0.5) {$\sigma_4$};
					
					\smlv (v11) at (0,0) {};
					\smlv (v12) at (0.5,0) {};
					\smlv (v13) at (1,0) {};

					\smlv (v21) at (0,0.5) {};
					\smlv (v22) at (0.5,0.5) {};
					\smlv (v23) at (1,0.5) {};

					\smlv (v31) at (0,1) {};
					
					\path[bend left]
					(v13) edge (v11)
					(v21) edge (v23)
					;
				\end{scope}
			\end{scope}
			
			\draw [-to,red](-0.7,0) -- (-0.3,-0.5);
			\begin{scope}[yshift=-1.2cm]
				\begin{scope}
					\node at (0.5,-0.5) {$\psi_{13}$};
					
					\smlv (v11) at (0,0) {};
					\smlv (v12) at (0.5,0) {};
					\smlv (v13) at (1,0) {};

					\smlv (v21) at (0,0.5) {};
					\smlv (v22) at (0.5,0.5) {};
					\smlv (v23) at (1,0.5) {};

					\smlv (v31) at (0,1) {};
					
					\path
					(v12) edge (v22)
					(v13) edge (v23)
					;
				\end{scope}
				
				\draw [to reversed-to,red](1.3,0.5) -- (1.7,0.5);
				\begin{scope}[xshift=2cm]		
					\smlv (v11) at (0,0) {};
					\smlv (v12) at (0.5,0) {};
					\smlv (v13) at (1,0) {};

					\smlv (v21) at (0,0.5) {};
					\smlv (v22) at (0.5,0.5) {};
					\smlv (v23) at (1,0.5) {};

					\smlv (v31) at (0,1) {};
					
					\path
					(v12) edge (v22)
					(v13) edge (v23)
					;
					\path[bend left]
					(v11) edge (v31)
					;
				\end{scope}
				
				\draw [-to,red](3.3,0.5) -- (3.7,0.5);
				\begin{scope}[xshift=4cm]		
					\smlv (v11) at (0,0) {};
					\smlv (v12) at (0.5,0) {};
					\smlv (v13) at (1,0) {};

					\smlv (v21) at (0,0.5) {};
					\smlv (v22) at (0.5,0.5) {};
					\smlv (v23) at (1,0.5) {};

					\smlv (v31) at (0,1) {};
					
					\path
					(v13) edge (v23)
					;
					\path[bend left]
					(v11) edge (v31)
					;
				\end{scope}
				
				\draw [to reversed-to,red](5.3,0.5) -- (5.7,0.5);
				\begin{scope}[xshift=6cm]		
					\smlv (v11) at (0,0) {};
					\smlv (v12) at (0.5,0) {};
					\smlv (v13) at (1,0) {};

					\smlv (v21) at (0,0.5) {};
					\smlv (v22) at (0.5,0.5) {};
					\smlv (v23) at (1,0.5) {};

					\smlv (v31) at (0,1) {};
					
					\path
					(v13) edge (v23)
					(v21) edge (v22)
					;
					\path[bend left]
					(v11) edge (v31)
					;
				\end{scope}
				
				\draw [-to,red](7.3,0.5) -- (7.7,0.5);
				\begin{scope}[xshift=8cm]	
					\smlv (v11) at (0,0) {};
					\smlv (v12) at (0.5,0) {};
					\smlv (v13) at (1,0) {};

					\smlv (v21) at (0,0.5) {};
					\smlv (v22) at (0.5,0.5) {};
					\smlv (v23) at (1,0.5) {};

					\smlv (v31) at (0,1) {};
					
					\path
					(v13) edge (v23)
					(v21) edge (v22)
					;
				\end{scope}
				
				\draw [to reversed-to,red](9.3,0.5) -- (9.7,0.5);
				\begin{scope}[xshift=10cm]		
					\smlv (v11) at (0,0) {};
					\smlv (v12) at (0.5,0) {};
					\smlv (v13) at (1,0) {};

					\smlv (v21) at (0,0.5) {};
					\smlv (v22) at (0.5,0.5) {};
					\smlv (v23) at (1,0.5) {};

					\smlv (v31) at (0,1) {};
					
					\path
					(v11) edge (v12)
					(v13) edge (v23)
					(v21) edge (v22)
					;
				\end{scope}
				
				\draw [-to,red](11.3,0.5) -- (11.7,0.5);
				\begin{scope}[xshift=12cm]		
					\node at (0.5,-0.5) {$\sigma_1$};
					
					\smlv (v11) at (0,0) {};
					\smlv (v12) at (0.5,0) {};
					\smlv (v13) at (1,0) {};

					\smlv (v21) at (0,0.5) {};
					\smlv (v22) at (0.5,0.5) {};
					\smlv (v23) at (1,0.5) {};

					\smlv (v31) at (0,1) {};
					
					\path
					(v11) edge (v12)
					(v21) edge (v22)
					;
				\end{scope}
			\end{scope}
		\end{scope}
		\end{tikzpicture}
		\caption{Only two possible $\M^*$-paths that start from a $1$-simplex contained in $\eta_1$, and end at a critical $1$-simplex.}\label{eta1}
	\end{figure}
	
	Let $\gamma_1$ be the unique $\M^*$-path that starts from $\psi_{13}$, and ends at $\sigma_4$ (see Fig.~\ref{psi3} and Fig.~\ref{eta1}), i.e.,\\
	\begin{tikzcd}
		\gamma_1 : \psi_{13}=\alpha_0 \ar[r,tail,"(+1)"] & \beta_0 \ar[r,"(+1)"] & \alpha_1 \ar[r,tail,"(+1)"] & \beta_1 \ar[r,"(+1)"] & \alpha_2 \ar[r,tail,"(+1)"] & \beta_2 \ar[r,"(-1)"] & \alpha_3=\sigma_4,
	\end{tikzcd}
	where
	\begin{align*}
		\alpha_0 &= [v_2^{(1)}v_2^{(2)}, v_3^{(1)}v_3^{(2)}]=\psi_{13},  
		&\beta_0 &= [v_1^{(1)}v_1^{(3)}, v_2^{(1)}v_2^{(2)}, v_3^{(1)}v_3^{(2)}]&\\
		\alpha_1 &= [v_1^{(1)}v_1^{(3)}, v_2^{(1)}v_2^{(2)}],
		&\beta_1 &= [v_1^{(1)}v_1^{(3)}, v_2^{(1)}v_2^{(2)}, v_1^{(2)}v_3^{(2)}]&\\
		\alpha_2 &= [v_2^{(1)}v_2^{(2)}, v_1^{(2)}v_3^{(2)}],
		&\beta_2 &= [v_1^{(1)}v_3^{(1)}, v_2^{(1)}v_2^{(2)}, v_1^{(2)}v_3^{(2)}]&\\
		\alpha_3 &= [v_1^{(1)}v_3^{(1)}, v_1^{(2)}v_3^{(2)}]=\sigma_4.
	\end{align*}
	By Equation~(\ref{mult}), the multiplicity of $\gamma_1$,
	\begin{align*}
		m(\gamma_1) &= (-\langle\beta_0,\alpha_0\rangle\langle\beta_0,\alpha_1\rangle)(-\langle\beta_1,\alpha_1\rangle\langle\beta_1,\alpha_2\rangle)(-\langle\beta_2,\alpha_2\rangle\langle\beta_2,\alpha_3\rangle)\\
		&= (-(+1)(+1))(-(+1)(+1))(-(+1)(-1)) = +1.
	\end{align*}		
	Let $\gamma_2$ be the unique $\M^*$-path that starts from $\psi_{13}$, and ends at $\sigma_1$ (see Fig.~\ref{psi3} and Fig.~\ref{eta1}), i.e.,\\
	\begin{tikzcd}
		\gamma_2 : \psi_{13}=\alpha_0 \ar[r,tail,"(+1)"] & \beta_0 \ar[r,"(-1)"] & \alpha_1 \ar[r,tail,"(+1)"] & \beta_1 \ar[r,"(+1)"] & \alpha_2 \ar[r,tail,"(+1)"] & \beta_2 \ar[r,"(-1)"] & \alpha_3=\sigma_1,
	\end{tikzcd}
	where
	\begin{align*}
		\alpha_0 &= [v_2^{(1)}v_2^{(2)}, v_3^{(1)}v_3^{(2)}]=\psi_{13},  
		&\beta_0 &= [v_1^{(1)}v_1^{(3)}, v_2^{(1)}v_2^{(2)}, v_3^{(1)}v_3^{(2)}]&\\
		\alpha_1 &= [v_1^{(1)}v_1^{(3)}, v_3^{(1)}v_3^{(2)}],
		&\beta_1 &= [v_1^{(1)}v_1^{(3)}, v_3^{(1)}v_3^{(2)}, v_1^{(2)}v_2^{(2)}]&\\
		\alpha_2 &= [v_3^{(1)}v_3^{(2)}, v_1^{(2)}v_2^{(2)}],
		&\beta_2 &= [v_1^{(1)}v_2^{(1)}, v_3^{(1)}v_3^{(2)}, v_1^{(2)}v_2^{(2)}]&\\
		\alpha_3 &= [v_1^{(1)}v_2^{(1)}, v_1^{(2)}v_2^{(2)}]=\sigma_1.
	\end{align*}
	By Equation~(\ref{mult}), the multiplicity of $\gamma_2$,
	\begin{align*}
		m(\gamma_2) &= (-\langle\beta_0,\alpha_0\rangle\langle\beta_0,\alpha_1\rangle)(-\langle\beta_1,\alpha_1\rangle\langle\beta_1,\alpha_2\rangle)(-\langle\beta_2,\alpha_2\rangle\langle\beta_2,\alpha_3\rangle)\\
		&= (-(+1)(-1))(-(+1)(+1))(-(+1)(-1)) = -1.
	\end{align*}
	Therefore, from Equation~(\ref{boundary}) and Equation~(\ref{bound-coeff}),
	\begin{align*}
		\tilde{\partial}_2(\eta_1) &= \left(\langle\eta_1,\psi_{13}\rangle \cdot m(\gamma_1)\right)\cdot \sigma_4 + \left(\langle\eta_1,\psi_{13}\rangle \cdot m(\gamma_2)\right)\cdot\sigma_1\\
		&= ((+1)(+1)))\cdot\sigma_4+((+1)(-1))\cdot\sigma_1 = -\sigma_1 + \sigma_4.
	\end{align*}
\end{example}

Considering all possible $\M^*$-paths that start from a $1$-simplex contained in a critical $2$-simplex, and end at a critical $1$-simplex (refer to Appendix~\ref{append1}), we may compute the images of all twenty-four critical $2$-simplices under the boundary operator $\tilde{\partial}_2$. These are all listed in Tab.~\ref{boundary-table}. We describe an algorithmic scheme to compute the images of critical $2$-simplices under $\tilde{\partial}_2$ in Appendix~\ref{append2}, and the computation of boundaries of two more critical $2$-simplices following this scheme is also discussed there.

\begin{table}[htbp]
\input{table}
\caption{Images of all critical $2$-simplices under the boundary operator $\tilde{\partial}_2$.}\label{boundary-table}
\end{table}

\subsubsection{First and second homology groups of $M_7$}

We have $\ker(\tilde{\partial}_1)=\tilde{C}_1(M_7)=\Bigl\langle \sigma_1, \sigma_2, \sigma_3, \sigma_4 \Bigr\rangle$, and from Tab.~\ref{boundary-table}, we conclude that $\operatorname{im}(\tilde{\partial}_2)$ is generated  by $\sigma_1-\sigma_4$, $\sigma_2-\sigma_3$, $\sigma_1-\sigma_2-\sigma_3$, $\sigma_1-\sigma_2+\sigma_4$, $\sigma_1-\sigma_3+\sigma_4$, and $\sigma_2+\sigma_3-\sigma_4$. Thus, 
\begin{align*}
	& H_1(M_7) \\
	\cong& \sfrac{\ker(\tilde{\partial}_1)}{\operatorname{im}(\tilde{\partial}_2)}\\
	\cong& \Bigl\langle \sigma_1, \sigma_2, \sigma_3, \sigma_4 \Bigr\rangle \Big/ \Bigl\langle \sigma_1-\sigma_4, \sigma_2-\sigma_3, \sigma_1-\sigma_2-\sigma_3, \sigma_1-\sigma_2+\sigma_4, \sigma_1-\sigma_3+\sigma_4, \sigma_2+\sigma_3-\sigma_4\Bigr\rangle\\
	\cong& \Bigl\langle \sigma_1, \sigma_2 \Bigr\rangle \Big/ \Bigl\langle \sigma_1-2\sigma_2, 2\sigma_1-\sigma_2 \Bigr\rangle\\
	\cong& \Bigl\langle \sigma_1 \Bigr\rangle \Big/ \Bigl\langle 3\sigma_1 \Bigr\rangle\\
	\cong& \sfrac{\Z}{3\Z} = \Z_3.
\end{align*}

Finally, from Tab.~\ref{boundary-table}, we may verify that out of images of all 24 generators of $\tilde{C}_2(M_7)$ under the boundary operator $\tilde{\partial}_2$, there are exactly four linearly independent ones. Therefore, $\operatorname{rank}(\ker(\tilde{\partial}_2))=24-4 =20$. Hence, $H_2(M_7) \cong \Z^{20}$. This concludes the proof of Theorem~\ref{m7-hom}.

\begin{remark}
	After computing $H_1(M_7)$, one may also compute the second homology group of $M_7$ using the Euler characteristic of $M_7$ as follows. If $f_i$ is the number of $i$-dimensional simplices of $M_7$, then from Equation~(\ref{fv}), $f_0=21$, $f_1=105$, and $f_2=105$ (with $f_i=0$, for all $i\ge 3$). Thus, $\chi(M_7)= \sum_{i \ge 0} (-1)^if_i =21-105+105=21$. If $b_i$ is the $i$-the Betti number of $M_7$, then $b_0=1$, $b_1=0$, and $b_i=0$ for $i \ge 3$ (since $M_7$ is a connected $2$-dimensional complex with $H_1(M_7)=\Z_3$). Thus, $b_2=\chi(M_7)-b_0=20$. Since $H_2(M_7)$ is torsion-free, we get $H_2(M_7)=\Z^{20}$.
\end{remark}

\subsection{Further augmentation of the near-optimal gradient vector field}\label{opt-cons}
We note that $\M^*$ is a highly efficient (near-optimal) gradient vector field on $M_7$ as, from Observation~\ref{crit-char}, it follows that
\begin{enumerate}[(i)]
	\item out of 105 simplices of dimension 2, only 24 are critical (with the second Betti number being 20),
	\item out of 105 simplices of dimension 1, only four are critical (with the first Betti number being 0),
	\item out of 21 simplices of dimension 0, only one is critical (with the zeroth Betti number being 1).
\end{enumerate}
However, we may augment $\M^*$ even further to one that satisfies Theorem~\ref{augmnt}.

\begin{proof}[of Theorem~\ref{augmnt}]
From Fig.~\ref{eta1} and Fig.~\ref{eta2} (Appendix~\ref{append2}), we observe that there is  
\begin{enumerate}[(i)]
	\item a unique $\M^*$-path that starts from a $1$-simplex (viz., $\psi_{13}$) contained in the critical $2$-simplex $\eta_1$, and ends at the critical $1$-simplex $\sigma_4$,
	\item a unique $\M^*$-path that starts from a $1$-simplex (viz., $\psi_{22}$) contained in the critical $2$-simplex $\eta_2$, and ends at the critical $1$-simplex $\sigma_3$,
\end{enumerate}
as shown in Fig.~\ref{fig-canc}.
\begin{figure}[!ht]
	\centering
	\begin{tikzpicture}
		\begin{scope}[scale=0.96, transform shape]
		\begin{scope}[yshift=2.3cm]
			\begin{scope}[xshift=-2cm]
				\node at (0.5,-0.5) {$\eta_1$};
				
				\smlv (v11) at (0,0) {};
				\smlv (v12) at (0.5,0) {};
				\smlv (v13) at (1,0) {};

				\smlv (v21) at (0,0.5) {};
				\smlv (v22) at (0.5,0.5) {};
				\smlv (v23) at (1,0.5) {};

				\smlv (v31) at (0,1) {};
				
				\path
				(v12) edge (v22)
				(v13) edge (v23)
				(v21) edge (v31)
				;
			\end{scope}
			
			\draw [-to,red](-0.7,0.5) -- (-0.3,0.5);
			\begin{scope}
				\node at (0.5,-0.5) {$\psi_{13}$};
				
				\smlv (v11) at (0,0) {};
				\smlv (v12) at (0.5,0) {};
				\smlv (v13) at (1,0) {};

				\smlv (v21) at (0,0.5) {};
				\smlv (v22) at (0.5,0.5) {};
				\smlv (v23) at (1,0.5) {};

				\smlv (v31) at (0,1) {};
				
				\path
				(v12) edge (v22)
				(v13) edge (v23)
				;
			\end{scope}
			
			\draw [to reversed-to,red](1.3,0.5) -- (1.7,0.5);
			\begin{scope}[xshift=2cm]	
				\smlv (v11) at (0,0) {};
				\smlv (v12) at (0.5,0) {};
				\smlv (v13) at (1,0) {};

				\smlv (v21) at (0,0.5) {};
				\smlv (v22) at (0.5,0.5) {};
				\smlv (v23) at (1,0.5) {};

				\smlv (v31) at (0,1) {};
				
				\path
				(v12) edge (v22)
				(v13) edge (v23)
				;
				\path[bend left]
				(v11) edge (v31)
				;
			\end{scope}
			
			\draw [-to,red](3.3,0.5) -- (3.7,0.5);
			\begin{scope}[xshift=4cm]		
				\smlv (v11) at (0,0) {};
				\smlv (v12) at (0.5,0) {};
				\smlv (v13) at (1,0) {};

				\smlv (v21) at (0,0.5) {};
				\smlv (v22) at (0.5,0.5) {};
				\smlv (v23) at (1,0.5) {};

				\smlv (v31) at (0,1) {};
				
				\path
				(v12) edge (v22)
				;
				\path[bend left]
				(v11) edge (v31)
				;
			\end{scope}
			
			\draw [to reversed-to,red](5.3,0.5) -- (5.7,0.5);
			\begin{scope}[xshift=6cm]		
				\smlv (v11) at (0,0) {};
				\smlv (v12) at (0.5,0) {};
				\smlv (v13) at (1,0) {};

				\smlv (v21) at (0,0.5) {};
				\smlv (v22) at (0.5,0.5) {};
				\smlv (v23) at (1,0.5) {};

				\smlv (v31) at (0,1) {};
				
				\path
				(v12) edge (v22)
				;
				\path[bend left]
				(v11) edge (v31)
				(v21) edge (v23)
				;
			\end{scope}
			
			\draw [-to,red](7.3,0.5) -- (7.7,0.5);
			\begin{scope}[xshift=8cm]	
				\smlv (v11) at (0,0) {};
				\smlv (v12) at (0.5,0) {};
				\smlv (v13) at (1,0) {};

				\smlv (v21) at (0,0.5) {};
				\smlv (v22) at (0.5,0.5) {};
				\smlv (v23) at (1,0.5) {};

				\smlv (v31) at (0,1) {};
				
				\path
				(v12) edge (v22)
				;
				\path[bend left]
				(v21) edge (v23)
				;
			\end{scope}
			
			\draw [to reversed-to,red](9.3,0.5) -- (9.7,0.5);
			\begin{scope}[xshift=10cm]			
				\smlv (v11) at (0,0) {};
				\smlv (v12) at (0.5,0) {};
				\smlv (v13) at (1,0) {};

				\smlv (v21) at (0,0.5) {};
				\smlv (v22) at (0.5,0.5) {};
				\smlv (v23) at (1,0.5) {};

				\smlv (v31) at (0,1) {};
				
				\path
				(v12) edge (v22)
				;
				\path[bend left]
				(v13) edge (v11)
				(v21) edge (v23)
				;
			\end{scope}
			
			\draw [-to,red](11.3,0.5) -- (11.7,0.5);
			\begin{scope}[xshift=12cm]
				\node at (0.5,-0.5) {$\sigma_4$};
				
				\smlv (v11) at (0,0) {};
				\smlv (v12) at (0.5,0) {};
				\smlv (v13) at (1,0) {};

				\smlv (v21) at (0,0.5) {};
				\smlv (v22) at (0.5,0.5) {};
				\smlv (v23) at (1,0.5) {};

				\smlv (v31) at (0,1) {};
				
				\path[bend left]
				(v13) edge (v11)
				(v21) edge (v23)
				;
			\end{scope}
		\end{scope}
		
		\begin{scope}
			\begin{scope}[xshift=-2cm]
				\node at (0.5,-0.5) {$\eta_2$};
				
				\smlv (v11) at (0,0) {};
				\smlv (v12) at (0.5,0) {};
				\smlv (v13) at (1,0) {};

				\smlv (v21) at (0,0.5) {};
				\smlv (v22) at (0.5,0.5) {};
				\smlv (v23) at (1,0.5) {};

				\smlv (v31) at (0,1) {};
				
				\path
				(v11) edge (v22)
				(v12) edge (v21)
				(v23) edge (v31)
				;
			\end{scope}
			
			\draw [-to,red](-0.7,0.5) -- (-0.3,0.5);
			\begin{scope}
				\node at (0.5,-0.5) {$\psi_{22}$};
				
				\smlv (v11) at (0,0) {};
				\smlv (v12) at (0.5,0) {};
				\smlv (v13) at (1,0) {};

				\smlv (v21) at (0,0.5) {};
				\smlv (v22) at (0.5,0.5) {};
				\smlv (v23) at (1,0.5) {};

				\smlv (v31) at (0,1) {};
				
				\path
				(v12) edge (v21)
				(v23) edge (v31)
				;
			\end{scope}
			
			\draw [to reversed-to,red](1.3,0.5) -- (1.7,0.5);
			\begin{scope}[xshift=2cm]	
				\smlv (v11) at (0,0) {};
				\smlv (v12) at (0.5,0) {};
				\smlv (v13) at (1,0) {};

				\smlv (v21) at (0,0.5) {};
				\smlv (v22) at (0.5,0.5) {};
				\smlv (v23) at (1,0.5) {};

				\smlv (v31) at (0,1) {};
				
				\path
				(v12) edge (v21)
				(v23) edge (v31)
				;
				\path[bend right]
				(v11) edge (v13)
				;
			\end{scope}
			
			\draw [-to,red](3.3,0.5) -- (3.7,0.5);
			\begin{scope}[xshift=4cm]		
				\smlv (v11) at (0,0) {};
				\smlv (v12) at (0.5,0) {};
				\smlv (v13) at (1,0) {};

				\smlv (v21) at (0,0.5) {};
				\smlv (v22) at (0.5,0.5) {};
				\smlv (v23) at (1,0.5) {};

				\smlv (v31) at (0,1) {};
				
				\path
				(v23) edge (v31)
				;
				\path[bend right]
				(v11) edge (v13)
				;
			\end{scope}
			
			\draw [to reversed-to,red](5.3,0.5) -- (5.7,0.5);
			\begin{scope}[xshift=6cm]		
				\smlv (v11) at (0,0) {};
				\smlv (v12) at (0.5,0) {};
				\smlv (v13) at (1,0) {};

				\smlv (v21) at (0,0.5) {};
				\smlv (v22) at (0.5,0.5) {};
				\smlv (v23) at (1,0.5) {};

				\smlv (v31) at (0,1) {};
				
				\path
				(v21) edge (v22)
				(v23) edge (v31)
				;
				\path[bend right]
				(v11) edge (v13)
				;
			\end{scope}
			
			\draw [-to,red](7.3,0.5) -- (7.7,0.5);
			\begin{scope}[xshift=8cm]	
				\node at (0.5,-0.5) {$\sigma_3$};
				
				\smlv (v11) at (0,0) {};
				\smlv (v12) at (0.5,0) {};
				\smlv (v13) at (1,0) {};

				\smlv (v21) at (0,0.5) {};
				\smlv (v22) at (0.5,0.5) {};
				\smlv (v23) at (1,0.5) {};

				\smlv (v31) at (0,1) {};
				
				\path
				(v21) edge (v22)
				;
				\path[bend right]
				(v11) edge (v13)
				;
			\end{scope}
		\end{scope}
		
		\end{scope}
	\end{tikzpicture}
	\caption{Unique $\M^*$-paths starting from a $1$-simplex contained in $\eta_1$ and $\eta_2$, and ending at $\sigma_4$ and $\sigma_3$, respectively.}\label{fig-canc}
\end{figure}
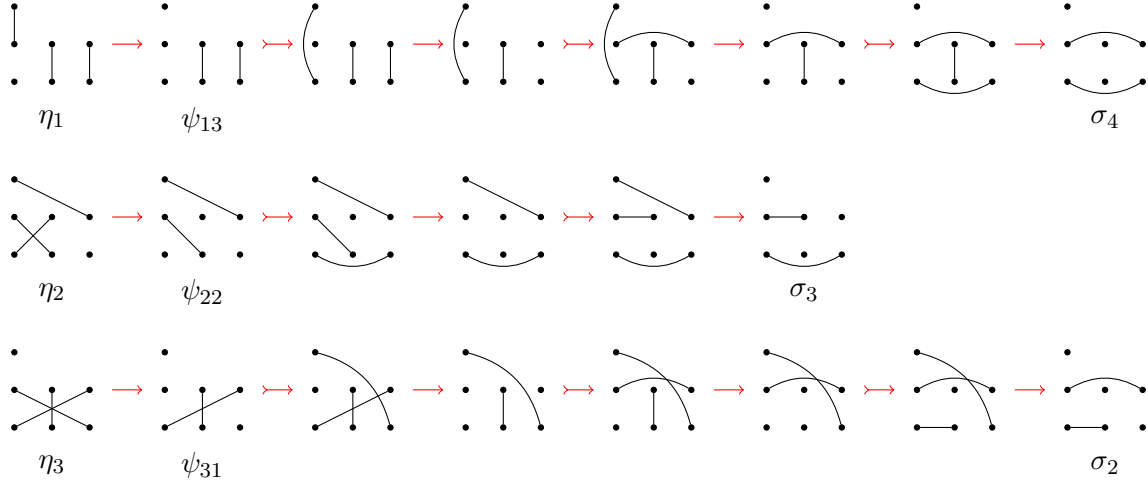
Moreover, we also observe that there is
\begin{enumerate}[(i)]
	\item no $\M^*$-path that starts from a $1$-simplex contained in $\eta_1$, and ends at $\sigma_3$,
	\item no $\M^*$-path that starts from a $1$-simplex contained in $\eta_2$, and ends at $\sigma_4$.
\end{enumerate}
Thus, the requirements of Theorem~\ref{multi-cancel} are satisfied. This allows us to apply the technique of cancellation of a critical pair by reversing paths (Theorem~\ref{cancel}) twice, and end up with a gradient vector field $\M^{**}$ from $\M^*$ such that $\eta_1$, $\sigma_4$, and $\eta_2$, $\sigma_3$ are not critical with respect to $\M^{**}$, while the criticality of all other simplices remains unchanged. Thus, with respect to $\M^{**}$, there are $24-2=22$ critical $2$-simplices, two critical $1$-simplices (viz., $\sigma_1$ and $\sigma_2$), and one critical $0$-simplex (viz., $\xi$).
\end{proof}

By Theorem~\ref{funda}, it follows that $M_7$ is homotopy equivalent to a CW complex with one $0$-cell, two $1$-cells, and twenty-two $2$-cells.

\section{Conclusion}
As $H_1(M_7)$ is nontrivial, with respect to any gradient vector field on $M_7$, there is at least one critical $1$-simplex. As a consequence, from the Morse inequalities (Theorem~\ref{morse-ineq}), it follows that with respect to any gradient vector field on $M_7$, there are at least 21 critical $2$-simplices.

We note that (from Appendix~\ref{append1}) there is an $\M^*$-path from a face of a critical (with respect to $\M^*$) $2$-simplex, say $\eta$, to the critical $1$-simplex $\sigma_1$ if and only if there is an $\M^*$-path from a face of $\eta$ to $\sigma_4$. 
Suppose we first cancel the critical pair $\eta^{(2)}, \sigma_i^{(1)}$, where $i \in \{1,4\}$, by reversing the unique gradient path $\gamma$ from a face of $\eta$ to $\sigma_i$ (by Theorem~\ref{cancel}). Then in addition to an $\M^*$-path (if one exists) from a face of another critical $2$-simplex $\eta'$ to $\sigma_{5-i}$, there is also a \emph{new} gradient path (with respect to the augmented gradient vector field) from a face of $\eta'$ to $\sigma_{5-i}$ that passes through reversed $\gamma$. Thus, the uniqueness of possible gradient paths (with respect to the augmented gradient vector field) from a face of any remaining critical $2$-simplex to $\sigma_{5-i}$, is no longer viable. So, out of $\sigma_1$ and $\sigma_4$, \emph{at most one} can be cancelled. Likewise, at most one of $\sigma_2$ and $\sigma_3$ can be cancelled. Therefore, starting with $\M^*$, the technique of cancellation of critical pairs can be applied \emph{at most} twice. This motivates us to conjecture that the augmented gradient vector field $\M^{**}$, which is obtained after applying the cancellation theorem twice, is indeed an optimal one.
\begin{conjecture}\label{conj}
	A gradient vector field on $M_7$ with respect to which there are 22 critical $2$-simplices, two critical $1$-simplices, and one critical $0$-simplex, is an optimal one.
\end{conjecture}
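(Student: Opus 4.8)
The plan is to first reduce Conjecture~\ref{conj} to a single sharp statement. Since $M_7$ is $2$-dimensional, the weak Morse inequalities (Theorem~\ref{morse-ineq}) give $m_0-m_1+m_2=b_0-b_1+b_2=1-0+20=21$, so for \emph{any} gradient vector field on $M_7$ the total number of critical simplices is $m_0+m_1+m_2=21+2m_1$, a quantity depending only on $m_1$. As $H_1(M_7)=\Z_3\neq 0$, one cannot have $m_1=0$ (that would force $H_1(M_7)=0$ in the Morse chain complex); and $m_1=1$ forces $m_0=1$, since if $m_0\ge 2$ while $m_1=1$ then $\tilde{\partial}_1\colon\Z\to\Z^{m_0}$ must be injective to make $H_0(M_7)=\Z$, whence $H_1(M_7)=\ker\tilde{\partial}_1/\operatorname{im}\tilde{\partial}_2=0$, a contradiction. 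Thus $\M^{**}$, which has $(m_0,m_1,m_2)=(1,2,22)$ and total $25$ (Theorem~\ref{augmnt}), is optimal if and only if \emph{no gradient vector field on $M_7$ has exactly one critical $1$-simplex}, which is then the whole content of the conjecture.

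The second step is to note that $m_1=1$ is obstructed neither homotopically nor homologically: by Theorem~\ref{funda}, such a gradient vector field would exhibit $M_7$ as homotopy equivalent to a CW complex with one $0$-cell, one $1$-cell and twenty-one $2$-cells, and such complexes exist abstractly (e.g.\ $(S^1\cup_3 e^2)\vee\bigvee_{20}S^2$, which has exactly the homology of $M_7$). Hence any obstruction is specific to the triangulation $M_7$ and must be extracted from the combinatorics of acyclic matchings on its Hasse diagram. I expect this to be the main difficulty: we are precisely in the regime in which the optimality problem is \textsf{NP-hard} in general, so there is no shortcut through invariants, and a genuine lower-bound certificate is required.

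For the third step I would pursue two complementary routes. The \emph{computational} route: model a gradient vector field as an acyclic matching on the Hasse diagram of $M_7$ (with $21+105+105=231$ nonempty faces, plus $\emptyset$), encode ``one unmatched $0$-face, one unmatched $1$-face, twenty-one unmatched $2$-faces, plus acyclicity'' as an integer program or a SAT instance, and verify infeasibility; the $S_7$-action lets one fix the putative critical edge up to its order-$48$ stabilizer and prune the search drastically. The \emph{structural} route: suppose such a gradient vector field $\V$ with unique critical edge $\sigma$ exists, so $\tilde{\partial}_1(\sigma)=0$ (forced by $H_0(M_7)=\Z$) and $\operatorname{im}(\tilde{\partial}_2)=3\Z\sigma$ (forced by $H_1(M_7)=\Z_3$ via Theorem~\ref{funda-hom}); then try to promote the ``coupling'' phenomenon isolated in Section~\ref{opt-cons} --- that a gradient path from a face of a critical $2$-simplex to one distinguished critical edge forces a companion path to another --- into a symmetry- or parity-based identity among the path counts $n(\eta,\sigma)$ incompatible with their all being $\equiv 0\pmod 3$ while having greatest common divisor $3$. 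I expect the computational route to be the one that actually closes the conjecture, with the structural route, if the coupling lemma can be made group-theoretically robust, providing the conceptual explanation for why $m_1=1$ is unattainable.
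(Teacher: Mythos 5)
The statement you are addressing is posed in the paper as a conjecture, and the paper does not prove it; the paragraph that precedes Conjecture~\ref{conj} in the Conclusion is a motivating heuristic, not a proof. What that heuristic shows is narrower than the conjecture claims: because, for $\M^*$, gradient paths from faces of critical $2$-simplices to $\sigma_1$ are coupled with gradient paths to $\sigma_4$ (and similarly for $\sigma_2$, $\sigma_3$), applying the single-pair cancellation of Theorem~\ref{cancel} on one of $\sigma_1, \sigma_4$ destroys the unique-path hypothesis needed to cancel the other, so \emph{starting from $\M^*$} and iterating Theorem~\ref{cancel} one cannot remove more than two critical $1$-simplices. That says nothing about gradient vector fields constructed by other means, which is exactly why the authors stop at a conjecture.

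Your proposal is likewise not a proof — you are explicit that it is a plan — but the preliminary reduction you carry out is correct and sharper than what the paper records. Using the Euler-characteristic identity $m_0 - m_1 + m_2 = \chi(M_7) = 21$, you observe that the total critical count equals $21 + 2m_1$, so Conjecture~\ref{conj} is equivalent to the single assertion that no gradient vector field on $M_7$ has exactly one critical $1$-simplex; and you correctly pin down $(m_0,m_1,m_2)=(1,1,21)$ as the unique potential improvement over $(1,2,22)$. The paper only records $m_0\ge 1$, $m_1\ge 1$, $m_2\ge 21$, hence total $\ge 23$, without noting that these bounds are met simultaneously only at $(1,1,21)$. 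You are also right that no homotopy- or homology-theoretic invariant can exclude $(1,1,21)$, so any obstruction must come from the combinatorics of acyclic matchings on the face poset of $M_7$ itself. Your computational route (integer programming or SAT pruned by the $S_7$-action) would, if carried out, actually close the conjecture; your structural route is essentially an attempt to globalize the paper's coupling observation to arbitrary gradient vector fields, and you are right to flag that step as the genuine difficulty. In short: your reduction is a real strengthening of the paper's framing, but neither your plan nor the paper constitutes a proof.
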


As mentioned before, in order to compute the (discrete) Morse homology groups of $M_7$ in an efficient manner, we constructed and worked with the near-optimal gradient vector field $\M^*$ that not only admits a low number of critical simplices of each dimension, but also, more importantly, allows an efficient (and human-comprehensible) enumeration of all possible gradient paths of interest. It would be another problem of interest to determine how ``far from random" the constructed gradient vector fields are.
\begin{problem}
	Compare the efficacy of the gradient vector field $\M^{*}$ (and the augmented one, $\M^{**}$) with those found by random methods as described by \cite{adi}, \cite{benedetti2}, \cite{benedetti1}.
\end{problem}

We also expect the techniques developed in this article to be useful for taking up problems of exploring the topology of higher dimensional matching complexes. As discussed in this article, one potential approach consists of the following two steps in order.
\begin{enumerate}
	\item Extend the initial gradient vector field constructed in Section~\ref{sec-cons}, which is optimal up to dimension $\nu_n-1$, to a sufficiently good (optimal or near-optimal) one. Some augmentation tricks used in this article may be adapted for higher dimensions. For example, the augmentation used in Example~\ref{m8top} is applicable to any $M_{2n}$, and the augmentation used in Subsection~\ref{m7nearopt} may be applied to any $M_{3n+1}$.
	\item Using the efficient extended gradient vector field, (perhaps with the aid of computer implementations) compute the homology groups.
\end{enumerate}

\begin{appendices}
\section{$\M^*$-paths ending at a critical $1$-simplex}\label{append1}
\input{append1}

\section{A scheme to compute the boundaries of critical $2$-simplices}\label{append2}
\input{append2}
\end{appendices}

\acknowledgements
\label{sec:ack}
We extend our gratitude to the anonymous referees for valuable insightful comments, corrections, and suggestions which significantly improved the overall exposition of this article. We are also thankful to Prof.\ Goutam Mukherjee for his encouragement and support during this work. We thank Prof.\ Samik Basu for introducing us to the topic of matching and chessboard complexes.

\nocite{*}
\bibliographystyle{abbrvnat}
\bibliography{matching-complex}
\label{sec:biblio}
\end{document}